\DeclareMathOperator{\End}{\mathsf{End}}
\newcommand{\N}{\mathbb{N}}
\newcommand{\id}{\mathsf{id}}
\newcommand{\ad}{\mathsf{ad}}
\newcommand{\la}{\langle}
\newcommand{\ra}{\rangle}
\newcommand{\nat}{\mathbb{N}}
\newcommand{\zz}{\mathbb Z}
\newcommand{\hh}{\mathbb H}
\newcommand{\dset}[2]{\D_{#1} #2}
\newcommand{\mg}[1]{#1^{\times}}
\newcommand{\sq}[1]{#1^{\times 2}}
\newcommand{\scg}[1]{\mg{#1}/\sq{#1}}
\newcommand{\sums}[1]{\sum\!{#1}^2}
\newcommand{\lla}{\la\!\la}
\newcommand{\rra}{\ra\!\ra}
\newcommand{\unl}{\underline}
\newcommand{\Sym}[1]{\mathsf{Sym}({#1})}
\newcommand{\Skew}[1]{\mathsf{Skew}({#1})}
\newcommand{\sign}{\mathsf{sign}}
\newcommand{\s}{\sigma}
\newcommand{\x}{\times}
\newcommand{\ox}{\otimes}
\newcommand{\type}{\mathsf{type}}
\newcommand{\ve}{\varepsilon}
\newcommand{\vf}{\varphi}
\newcommand{\vt}{\vartheta}
\renewcommand{\deg}{\mathsf{deg}}
\newcommand{\too}{\rightarrow}
\newcommand{\mapstoo}{\mapsto}
\newcommand{\Ad}{\mathsf{Ad}}
\newcommand{\an}{\mathsf{an}}
\newcommand{\qi}{\mid}
\newcommand{\qil}{\,\cdot\!\!\mid}
\newcommand{\qir}{\mid\!\!\cdot\,}
\newcommand{\qilr}{\,\cdot\!\!\mid\!\!\cdot\,}
\newcommand{\Sim}{\mathsf{Sim}}
\newcommand{\Int}{\mathsf{Int}}
\newcommand{\Trd}{\mathsf{Trd}}
\renewcommand{\deg}{\mathsf{deg}}
\renewcommand{\dim}{\mathsf{dim}}
\newcommand{\can}{\mathsf{can}}
\newcommand{\lra}{\rightarrow}
\newcommand{\half}{\mbox{$\frac{1}{2}$}}
\newcommand{\itr}{\mathsf{Tr}}
\newcommand{\G}{\mathsf{G}}
\newcommand{\D}{\mathsf{D}}
\newcommand{\M}{\mathsf{M}}
\newcommand{\tp}{\mathsf{t}}
\newcommand{\Z}{\mathsf{Z}}
\renewcommand{\leq}{\leqslant}
\renewcommand{\geq}{\geqslant}
\numberwithin{equation}{section}
\newtheorem{thm}[equation]{Theorem}
\newtheorem{prop}[equation]{Proposition}
\newtheorem{cor}[equation]{Corollary}
\newtheorem{lem}[equation]{Lemma}
\newtheorem{qu}[equation]{Question}
\theoremstyle{definition}
\newtheorem{ex}[equation]{Example}
\newtheorem{rem}[equation]{Remark}
\def\@oddhead{K.J. Becher, T. Unger: Weakly hyperbolic involutions\hfill \thepage}
\def\@evenhead{Weakly hyperbolic involutions}
\def\ps@pprintTitle{%
     \let\@oddhead\@empty
     \let\@evenhead\@empty
     \def\@oddfoot{August 8, 2017\hfill}%
     \let\@evenfoot\@oddfoot}
\begin{document}

\begin{frontmatter}

\title{Weakly hyperbolic involutions}
\author[kjb]{Karim Johannes Becher}
\ead{karimjohannes.becher@uantwerpen.be}

\author[tu]{Thomas Unger}
\ead{thomas.unger@ucd.ie}

\address[kjb]{Universiteit Antwerpen, Departement Wiskunde--Informatica, Middelheim\-laan~1, 2020 Antwerpen, Belgium}

\address[tu]{School of Mathematics and Statistics, University College Dublin, Belfield,\linebreak Dublin~4, Ireland}

\begin{abstract}
\noindent
Pfister's Local-Global Principle states that a quadratic form over a (formally) real field is weakly hyperbolic (i.e.~represents a torsion element in the Witt ring) if and only if its total signature is zero.
This result extends naturally to the setting of central simple algebras with involution.
The present article provides a new proof of this result and extends it to the case of signatures at preorderings.
Furthermore the quantitative relation between nilpotence and torsion is explored for quadratic forms as well as for central simple algebras with involution.
\end{abstract}

\begin{keyword} 
Real field\sep Quadratic form\sep Signature\sep Local–global\sep Quaternion algebra\sep Witt group

\MSC[2010] primary 11E10\sep secondary 11E04  \sep 11E81 \sep 12D15 \sep 16K20 \sep 16W10
\end{keyword}

\end{frontmatter}

%%%%%%%%%%%%%%%%%%%%%%%%%%
\section{Introduction} %%%
%%%%%%%%%%%%%%%%%%%%%%%%%%

Pfister's Local-Global Principle says that a regular quadratic form over a (formally) real field  represents a torsion element in the Witt ring if and only if its signature at each ordering of the field is zero. 
This result has been extended in \cite{LU} to  central simple algebras with involution. 

The theory of central simple algebras with involution is a natural extension of quadratic form theory. 
On the one hand many concepts and results from quadratic form theory
have been extended  to algebras with involution. 
On the other hand quadratic forms are used as tools in the study of algebras with involution. Examples include involution trace forms and spaces of similitudes. 

In this article we are interested in
weakly hyperbolic algebras with involution, a natural generalization  of torsion quadratic forms, which was considered first in \cite[Chap.~5]{Unger}.
In \cite{LU} such algebras with involution were characterized as those having trivial signature at all orderings of the base field, thus generalizing Pfister's Local-Global Principle.

We give a new exposition of this result, highlighting several new aspects, and obtain some extensions.
In particular, we provide bounds for the torsion order of nilpotent quadratic forms and extend this result to involutions.
We attempt to minimize the use of hermitian forms and treat algebras with involution as direct analogues of quadratic forms. 
We only consider fields of characteristic different from $2$ since the notion of weak hyperbolicity is only interesting in this case.

The structure of this article is as follows.
In Section~\ref{Sec:PLGP} we recall the necessary background material from the theory of quadratic forms 
and ordered fields as well as   Pfister's Local-Global Principle for quadratic forms in a generalized version, 
relative to  preorderings. 
We also touch on the quantitative aspect of the relation between nilpotence and torsion, using 
Lewis' annihilating polynomials. 

In Section~\ref{sec3} 
 we recall the basic terminology for algebras with involution,  consider their  relations to quaternion algebras and quadratic forms and study involution trace forms.

In Section~\ref{sec6} we treat the notion of hyperbolicity for algebras with involution and 
cite the relevant results about its behaviour under field extensions.

In Section~\ref{sec7} we turn to the study of algebras with involution over real fields.
In \Cref{P:rcai} we obtain a classification  over real closed fields.
We then provide  a uniform definition of signatures for involutions of both kinds  with respect to an ordering.
Signatures  were introduced in \cite{LT} for involutions of the first kind 
and in \cite{Queg} for involutions of the second kind, and both cases are treated in \cite[(11.10), (11.25)]{BOI}.

In Section~\ref{sec8} we give a new proof of the main result of \cite{LU}, 
an analogue of Pfister's Local-Global Principle for algebras with involution (\Cref{T:PLULG}). 
In \Cref{T:PLULG-Pre} we extend this result to a local-global principle for $T$-hyperbolicity with respect to a preordering~$T$.

In its original version for quadratic forms as well as in the generalized version for algebras with involution  Pfister's Local-Global Principle relates the hyperbolicity of tensor powers  to the hyperbolicity of multiples. For quadratic forms  this corresponds to the relation between nilpotence and torsion for an element of the Witt ring. In Section~\ref{sec9} 
we consider the quantitative aspect of this relation in the setting of algebras with involution. 

Some of the essential ideas contained in  Sections~\ref{sec7} and~\ref{sec8} germinated in the {MSc} thesis of Beatrix Bernauer \cite{BB}, prepared under the guidance of the first named author.

%%%%%%%%%%%%%%%%%%%%%%%%%%%%%%%%%%%%%%%%%%%%%%%%%%%%%
\section{Pfister's Local-Global Principle} %%%
%%%%%%%%%%%%%%%%%%%%%%%%%%%%%%%%%%%%%%%%%%%%%%%%%%%%%
\label{Sec:PLGP}

We refer to \cite{Lam}  and \cite{Scharlau} for the foundations of quadratic form theory over fields and the relevant terminology.
Let $K$ be a field of characteristic different from $2$.
We denote by $\mg{K}$ the multiplicative group of $K$, by $\sq{K}$ the subgroup of nonzero squares, and by $\sums{K}$ the subgroup of nonzero sums of squares in $K$.
If $\sums{K}=\sq{K}$ then $K$ is said to be \emph{pythagorean}.

By a \emph{quadratic form over $K$} (or just a \emph{form}) we mean a pair $(V,B)$ consisting of a finite-dimensional $K$-vector space $V$ and a regular symmetric $K$-bilinear form $B$. 
We consider quadratic forms up to isometry and use the equality sign to indicate that two forms are 
isometric. 
For a form $\vf=(V,B)$ over $K$ we write $\dset{K}{(\vf)}=\{a\in\mg{K} \mid  a=B(x,x)\}$.
We further abbreviate  $\D_K(m) =\D_K(m\x \la 1\ra)$, which is
the set of nonzero sums of $m$ squares in $K$.
Given $n\in\N$ and $a_1,\dots,a_n\in\mg{K}$, we use the standard notations $\la a_1,\dots,a_n\ra$ and 
$\lla a_1,\dots,a_n\rra=\la 1,-a_1\ra\ox\cdots\ox\la 1,-a_n\ra$ to denote diagonalized quadratic forms of dimension $n$ and $n$-fold Pfister forms, respectively.
Given a quadratic form $\vf$ over $K$ and  $m\in \nat$, we write $m\x \vf$ for the $m$-fold orthogonal sum $\vf \perp \cdots \perp \vf$ and further $\vf^{\ox m}$ for the $m$-fold tensor product $\vf\ox\cdots\ox\vf$.

Let $W\!K$ denote the Witt ring of $K$ and $I\!K$ its fundamental ideal. 
We sometimes write $[\vf]$ to denote the class in $W\!K$ given by a  form $\vf$ over $K$.

An \emph{ordering of $K$} is a set $P\subseteq K$ that is additively and multiplicatively closed and that satisfies $P\cup -P=K$ and $P\cap -P=\{0\}$.
Any such set $P$ is the  \emph{positive cone} $\{ x\in K\mid x\geq 0\}$ for a unique total order relation $\leq$ on $K$ that is compatible with the field operations.
Let $X_K$ denote the set of orderings of $K$. If the field $K$ has an ordering then we say that it is \emph{real}, otherwise \emph{nonreal}. By the Artin-Schreier Theorem the field $K$ is real if and only if  $-1\notin\sums{K}$. 

Let $T\subseteq K$ be additively and multiplicatively closed  with $\sq{K}\cup\{0\}\subseteq T$.
Then the set $T+xT=\{s+xt\mid s,t\in T\}$ is  additively and multiplicatively closed for any $x\in K$.
Moreover $\mg T=T\setminus \{0\}$ is a subgroup of $\mg{K}$ containing $\sums{K}$.
If further $-1\notin T$, then $T$ is called a \emph{preordering of $K$}. 
Any ordering is a preordering. 
Furthermore, if $T$ is a preordering of $K$, then so is $T+xT$ for any $x\in K\setminus-T$.
It follows from this that a preordering of $K$ is maximal with respect to inclusion if and only if it is an ordering. Hence, every preordering is contained in an ordering.
For a preordering $T$ of $K$ we set $$X_T=\{P\in\ X_K\mid T\subseteq P\}\,.$$

Given a quadratic form $\vf$ over $K$, we denote the signature of $\vf$ at an ordering $P\in X_K$ by $\sign_P(\vf)$ and obtain a map 
$\widehat{\vf} : X_K\too \zz, P\longmapsto \sign_P(\vf)$.
This gives rise to a ring homomorphism $$\sign : W\!K \too \zz^{X_K}, \vf\mapstoo \widehat{\vf}$$
called the \emph{total signature}.
If $K$ is nonreal, then $X_K=\emptyset$ and $\zz^{X_K}$ is the ring with one element.
Let $T$ be a fixed preordering of $K$. 
We write $$\sign_T: W\!K\too \zz^{X_T},\vf\mapstoo \widehat{\vf}|_{X_T}$$
and we denote the kernel of this homomorphism by $I_TK$.

Let $\vf$ be a quadratic form over $K$.
We say that $\vf$ is \emph{$T$-positive} if $\vf$ is nontrivial and  $\dset{K}{(\vf)}\subseteq \mg{T}$.
If $a_1,\dots,a_n\in\mg{K}$ are such that $\vf=\la a_1,\dots,a_n\ra$, then $\vf$ is $T$-positive if and only if $a_1,\dots,a_n\in\mg{T}$.
Hence, orthogonal sums and tensor products of $T$-positive forms are again $T$-positive.
We say that $\vf$ is \emph{$T$-isotropic} or \emph{$T$-hyperbolic} if there exists a $T$-positive form $\vt$ over $K$ such that $\vt\otimes\vf$ is isotropic or hyperbolic, respectively.

The following statement is a generalization of Pfister's Local-Global Principle, relative to a preordering (cf.   \cite[(1.26)]{LamOVQ}).

\begin{thm}[Pfister]
\label{T:pre-Pfister}
Let $T$ be a preordering of $K$.
The ideal $I_TK$ is generated by the classes of binary forms $\la 1,-t\ra$ with $t\in\mg{T}$.
Moreover, for a quadratic form $\varphi$ over $K$ the following statements are equivalent:
\begin{enumerate}[$(i)$]
\item $\sign_T(\varphi)=0$.
\item The form $\varphi$ is $T$-hyperbolic.
\item There exists a $T$-positive Pfister form $\tau$ over $K$ such that $\tau\otimes\varphi$ is hyperbolic.
\item There exist $r\geq 0$, $a_1,\dots,a_r\in\mg{K}$ and $t_1,\dots,t_r\in\mg{T}$ such that $\varphi$ is Witt equivalent to $\la a_1,-a_1t_1\ra\perp\dots\perp\la a_r,-a_rt_r\ra$.
\end{enumerate}
\end{thm}

A quadratic form $\vf$ over $K$ is said to be \emph{torsion}  or \emph{weakly hyperbolic} if $m\times \varphi$ is hyperbolic for some positive integer $m$.
The following is \cite[Satz 10 and Satz 22]{Pfi66}.

\begin{cor}[Pfister]\label{C:PLGP}
Assume that $K$ is real. For a quadratic form $\varphi$ over $K$ the following statements are equivalent:
\begin{enumerate}[$(i)$]
\item $\sign(\varphi)=0$.
\item The quadratic form $\varphi$ is weakly hyperbolic.
\item There exists $m\in\N$ such that $2^m\x \vf$ is hyperbolic.
\item There exists $n\in\N$ such that $\vf^{\ox n}$ is hyperbolic.
\item There exist $r\geq 0$, $a_1,\dots,a_r\in\mg{K}$ and $s_1,\dots,s_r\in \sums{K}$ such that $\varphi$ is Witt equivalent to $\la a_1,-a_1s_1\ra\perp\dots\perp\la a_r,-a_rs_r\ra$.
\end{enumerate}
\end{cor}

\begin{cor}[Pfister]\label{C:Scharlau-2prim}
The order of any torsion element in $W\!K$ is a $2$-power. 
\end{cor}

For $n\in\N$ let 
$$L_n(X) \,\,=\,\, \prod_{i=0}^n (X-n+2i) \in \mathbb{Z}[X].$$
Note that $L_n(-X)=(-1)^{n+1}\cdot L_n(X)$, so that the polynomial $L_n(X)$ is either even or odd.
In \cite{Lew}, Lewis showed that  these polynomials have a crucial property relating 
to  quadratic forms.
We include the short proof due to K.H.~Leung, which is mentioned in~\cite{Lew}.

\begin{thm}[Lewis]\label{T:Lewis}
Let $n\in \N$ and let $\vf$ be a  quadratic form of dimension $n$ over $K$. Then $L_n([\vf]) = 0$ in $W\!K$.
\end{thm}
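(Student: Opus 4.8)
The plan is to reduce the assertion to a universal polynomial identity in a finitely generated commutative ring. Since $\mathrm{char}\,K\neq2$, I would first diagonalise $\vf=\la a_1,\dots,a_n\ra$ with $a_i\in\mg K$, so that in $W\!K$ one has $[\vf]=\sum_{i=1}^n[\la a_i\ra]$. Each summand is a square root of the identity, because $[\la a_i\ra]^2=[\la a_i^2\ra]=[\la 1\ra]=1$. Consequently there is a ring homomorphism
$$
R_n:=\zz[t_1,\dots,t_n]/(t_1^2-1,\dots,t_n^2-1)\;\lra\;W\!K,\qquad t_i\mapsto[\la a_i\ra],
$$
which sends $t_1+\dots+t_n$ to $[\vf]$ and hence $L_n(t_1+\dots+t_n)$ to $L_n([\vf])$. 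It therefore suffices to prove that $L_n(t_1+\dots+t_n)=0$ in $R_n$.

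For this, the key point is that $R_n$ embeds into a finite product of copies of $\zz$. For each sign vector $\ve\in\{\pm1\}^n$ one has a ring homomorphism $\pi_\ve\colon R_n\to\zz$, $t_i\mapsto\ve_i$, and I would show that the resulting map $\pi=(\pi_\ve)_\ve\colon R_n\to\prod_{\ve\in\{\pm1\}^n}\zz$ is injective. Since $R_n$ is $\zz$-free with basis the $2^n$ squarefree monomials $t_S=\prod_{i\in S}t_i$ ($S\subseteq\{1,\dots,n\}$) and $\pi_\ve(t_S)=\prod_{i\in S}\ve_i$, injectivity of $\pi$ amounts to the invertibility over $\qq$ of the matrix $\big(\prod_{i\in S}\ve_i\big)_{S,\ve}$; after suitable ordering this is the $n$-fold Kronecker power of $\left(\begin{smallmatrix}1&1\\1&-1\end{smallmatrix}\right)$, with determinant $\pm2^{\,n2^{n-1}}\neq0$.

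Granting the embedding, the identity follows at once: if $\ve$ has exactly $k$ coordinates equal to $-1$, then $\ve_1+\dots+\ve_n=n-2k$, and the factor of $L_n$ with index $i=k$ evaluates to $(n-2k)-n+2k=0$; since $k$ ranges over $\{0,1,\dots,n\}$, which is precisely the index set of the product defining $L_n$, we get $\pi_\ve\big(L_n(t_1+\dots+t_n)\big)=0$ for every $\ve$, hence $L_n(t_1+\dots+t_n)\in\ker\pi=0$. Combined with the first paragraph this yields $L_n([\vf])=0$ in $W\!K$.

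The main obstacle is the injectivity of $\pi$ — everything else is formal — and this is exactly the place where the specific shape of $L_n$ enters: its roots are the integers $n,n-2,\dots,-n$, i.e.\ the possible values of $\sum_i\ve_i$, which are in turn the conceivable signatures of an $n$-dimensional form. An alternative to the Hadamard-matrix computation is to prove $L_n(t_1+\dots+t_n)=0$ in $R_n$ by induction on $n$, using the factorisations $L_n(X)=(X+n)L_{n-1}(X-1)=(X-n)L_{n-1}(X+1)$ together with the orthogonality relation $(1-t_n)(1+t_n)=0$ and the fact that $R_n$ is torsion-free.
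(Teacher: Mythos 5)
Your proof is correct, but it takes a genuinely different route from the paper's. The paper reproduces Leung's two-line induction: since $L_n$ is even or odd and $(a\vf)^{\ox 2}=\vf^{\ox 2}$, one may scale $\vf$ so that $\vf=\vf'\perp\la 1\ra$, and then the factorisation $L_n(X)=(X+n)\,L_{n-1}(X-1)$ together with the inductive hypothesis $L_{n-1}([\vf'])=0$ finishes the argument. You instead transport the whole computation to the universal ring $R_n=\zz[t_1,\dots,t_n]/(t_1^2-1,\dots,t_n^2-1)$, the group ring of $(\zz/2\zz)^n$, and verify $L_n(t_1+\dots+t_n)=0$ there by evaluating at all $2^n$ characters. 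Every step checks out: diagonalisability uses $\mathrm{char}\,K\neq 2$; each $[\la a_i\ra]$ is indeed a square root of $1$ in $W\!K$; $R_n$ is $\zz$-free on the squarefree monomials, so injectivity of the total evaluation map does reduce to the nonvanishing of the Walsh--Hadamard determinant $\pm2^{\,n2^{n-1}}$; and $L_n(n-2k)=0$ for all $0\le k\le n$ because $k$ lies in the index set of the defining product. What your version buys is conceptual clarity and extra generality: it explains the shape of $L_n$ (its roots are exactly the possible values of $\sum_i\ve_i$, i.e.\ the conceivable signatures), and it proves the stronger universal statement that $L_n$ annihilates the sum of any $n$ square roots of unity in any commutative ring --- which is the observation behind Lewis's result that $W\!K$ is integral over $\zz$. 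What it costs is length and a linear-algebra verification that the paper's scaling trick sidesteps entirely; your closing remark about an induction in $R_n$ using $(1-t_n)(1+t_n)=0$ is essentially Leung's argument relocated to the universal ring.
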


\begin{proof}
As $(a\vf)^{\ox 2} = \vf^{\ox 2}$ for all $a\in\mg{K}$ and  $L_n(X)$ is either even or odd, we may scale $\vf$ and assume that $\vf = \vf'\perp\la 1\ra$  where $\vf'$ is a form of dimension $n-1$. 
Using the induction hypothesis for $\vf'$ we obtain that $L_{n-1}([\vf]-1)=L_{n-1}([\vf'])=0$.
In view of the equality $L_n(X) = (X+n)\cdot L_{n-1}(X-1)$ we conclude that $L_n([\vf])=0$.
\end{proof}

As shown in \cite{Lew} and \cite{Lew89} the last statement can be used to prove some statements about the Witt ring of quadratic forms which were obtained earlier by different methods, usually involving field extensions.
This holds in particular for \Cref{T:pre-Pfister} and \Cref{C:PLGP}. 
Another such example is the following statement. 

\begin{cor}[Scharlau]\label{C:zero-div}
Any zero-divisor of $W\!K$ lies in $I\!K$.
\end{cor}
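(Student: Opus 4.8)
The plan is to establish the contrapositive: if $\alpha\in W\!K$ does not lie in $I\!K$, then $\alpha$ is not a zero-divisor. Since $I\!K$ consists precisely of the classes of even-dimensional forms, such an $\alpha$ is represented by a quadratic form $\vf$ of some odd dimension $n$. The idea is to feed this into Lewis's theorem \eqref{T:Lewis} in order to obtain a polynomial relation satisfied by $\alpha$ whose constant term is an \emph{odd} integer, and then to play this relation off against the fact, recorded in \eqref{C:Scharlau-2prim}, that torsion in $W\!K$ is $2$-primary.

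Concretely, I would first observe that for odd $n$ the polynomial $L_n(X)=\prod_{i=0}^n(X-n+2i)$ has odd, hence nonzero, constant term: its roots are the integers $n-2i$ for $0\le i\le n$, all of which are odd, so $L_n(0)=\prod_{i=0}^n(2i-n)$ is a product of odd integers. Write $L_n(X)=X\cdot h(X)+L_n(0)$ with $h\in\zz[X]$, which is possible since $L_n(X)-L_n(0)\in X\zz[X]$. Evaluating at $\alpha=[\vf]$ and invoking \eqref{T:Lewis}, we get $0=L_n(\alpha)=\alpha\,h(\alpha)+L_n(0)\cdot 1$, that is, $\alpha\cdot h(\alpha)=-L_n(0)\cdot 1$ in $W\!K$.

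To conclude, suppose $\alpha\beta=0$ for some $\beta\in W\!K$. Multiplying the identity above by $\beta$ and using commutativity of $W\!K$ gives $-L_n(0)\cdot\beta=h(\alpha)\cdot(\alpha\beta)=0$, so $\beta$ is annihilated by the nonzero integer $L_n(0)$. If $\beta\neq 0$, then $\beta$ is a nonzero torsion element of $W\!K$, and by \eqref{C:Scharlau-2prim} its additive order is a power of $2$ greater than $1$; but this order divides the odd integer $L_n(0)$, a contradiction. Hence $\beta=0$, and $\alpha$ is not a zero-divisor. I do not anticipate a serious obstacle here: the two points that need a little care are that a class outside $I\!K$ indeed admits an odd-dimensional representative, so that \eqref{T:Lewis} applies with $n$ odd, and that $L_n(0)$ is odd; everything else is a formal manipulation in $W\!K$ combined with the already-established $2$-primality of torsion.
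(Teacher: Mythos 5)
Your proof is correct and follows essentially the same route as the paper: represent $\alpha\notin I\!K$ by an odd-dimensional form, apply Lewis's theorem \eqref{T:Lewis}, extract the odd constant term of $L_n$ to get $m\beta=0$ for an odd integer $m$, and conclude via the $2$-primality of torsion \eqref{C:Scharlau-2prim}. The paper merely writes $L_{2n+1}(X)=\prod_{i=0}^{n}(X^2-(2i+1)^2)$ explicitly, which makes the oddness of the constant term immediate; otherwise the arguments coincide.
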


\begin{proof}
Let $\alpha\in W\!K\setminus I\!K$.
By \Cref{T:Lewis} there exists $n\in\nat$ such that $\alpha$ is a zero of
$L_{2n+1}(X)=\prod_{i=0}^{n} (X^2-(2i+1)^2)$.
Hence, for $\beta\in W\!K$ with $\alpha\beta=0$ we have $m\beta=0$ for the odd integer $m=\prod_{i=0}^{n} (2i+1)^2$, thus $\beta=0$ by \Cref{C:Scharlau-2prim}.
\end{proof}

\begin{cor}
\label{C:alpha-mult}
Let $n\in\nat$ and let $\vf$ be a quadratic form of dimension $2n$ over~$K$.
Then $2^{2n-1}n!(n-1)!\cdot[\vf]$ is a multiple of $[\vf]^2$ in $W\!K$.
\end{cor}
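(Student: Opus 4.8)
The plan is to derive this from Lewis's Theorem~\ref{T:Lewis}, applied not to $\vf$ but to a subform of $\vf$ of dimension $2n-1$. Feeding \ref{T:Lewis} the form $\vf$ itself gives only $L_{2n}([\vf])=0$; since $L_{2n}(X)=X\prod_{j=1}^{n}(X^{2}-4j^{2})$ has lowest-degree term $(-1)^{n}2^{2n}(n!)^{2}\cdot X$, this yields no better than $2^{2n}(n!)^{2}\cdot[\vf]\in[\vf]^{2}\cdot W\!K$, and as $2^{2n}(n!)^{2}=2n\cdot 2^{2n-1}n!(n-1)!$ the asserted bound is sharper by a factor $2n$. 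Recovering that factor is the whole point.

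First I would reduce to the case where $\vf$ is isometric to $\la 1\ra\perp\psi$ for some form $\psi$ of dimension $2n-1$. For $n\geq 1$ the form $\vf$ represents some $a\in\mg K$, whence $a\vf$ represents the square $a^{2}$ and is therefore isometric to $\la 1\ra\perp\psi$ with $\dim\psi=2n-1$. It suffices to prove the statement for $a\vf$ in place of $\vf$: since $(a\vf)^{\ox 2}=\vf^{\ox 2}$ we have $[a\vf]^{2}=[\vf]^{2}$, so from $2^{2n-1}n!(n-1)!\cdot\la a\ra[\vf]\in[\vf]^{2}W\!K$ one recovers $2^{2n-1}n!(n-1)!\cdot[\vf]\in[\vf]^{2}W\!K$ by multiplying through by the unit $\la a\ra$ and using $\la a\ra^{2}=1$. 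So assume $\vf=\la 1\ra\perp\psi$; then $[\psi]=[\vf]-1$ and \ref{T:Lewis} applied to $\psi$ gives $L_{2n-1}([\vf]-1)=0$ in $W\!K$.

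The key computation starts from the factorization $L_{2n-1}(Y)=\prod_{k}(Y^{2}-k^{2})$, the product ranging over the odd integers $1\leq k\leq 2n-1$ --- these being exactly the roots of $L_{2n-1}$. Substituting $Y=[\vf]-1$, the factor with $k=1$ collapses to $([\vf]-1)^{2}-1=[\vf]\cdot([\vf]-2)$, so
$$0\,=\,[\vf]\cdot\Bigl(\,([\vf]-2)\prod_{k=3,5,\dots,2n-1}\bigl(([\vf]-1)^{2}-k^{2}\bigr)\Bigr)\,.$$
The bracketed expression is a polynomial in $[\vf]$ with integer coefficients, say $c+[\vf]\cdot h$ with $c\in\zz$ and $h\in W\!K$, and then $c\cdot[\vf]=-[\vf]^{2}h\in[\vf]^{2}W\!K$. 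It only remains to evaluate $c$, the value of the bracket at $[\vf]=0$, which is $(-2)\prod_{k=3,5,\dots,2n-1}(1-k^{2})$; writing $k=2j+1$ turns each $1-k^{2}$ into $-4j(j+1)$, and the product over $j=1,\dots,n-1$ equals $(-1)^{n}2^{2n-1}n!(n-1)!$. Hence $2^{2n-1}n!(n-1)!\cdot[\vf]=(-1)^{n}c\cdot[\vf]=(-1)^{n+1}[\vf]^{2}h$ is a multiple of $[\vf]^{2}$, as claimed.

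I do not expect a genuine obstacle. The one essential idea is to apply Lewis's Theorem to a codimension-one subform of $\vf$ rather than to $\vf$ itself, which is exactly what sharpens the constant by the factor $2n$; the accompanying reduction by scaling is harmless precisely because $\la a\ra$ is a unit of $W\!K$, and evaluating $c$ is only a short factorial rearrangement.
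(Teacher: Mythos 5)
Your proof is correct and follows essentially the same route as the paper: scale so that $\vf=\la 1\ra\perp\psi$, apply Lewis's theorem to the $(2n-1)$-dimensional subform to get $L_{2n-1}([\vf]-1)=0$, and read off the coefficient of the linear term, which is $\pm 2^{2n-1}n!(n-1)!$. The paper records this more compactly via the factorization $L_{2n-1}(X-1)=(X-2n)X\prod_{i=1}^{n-1}(X^2-4i^2)$, but your computation of the constant and your explicit justification of the scaling step are the same argument spelled out in more detail.
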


\begin{proof}
We may scale $\vf$ and assume that $\vf=\la 1\ra\perp\vf'$ where $\vf'$ is a form of dimension $2n-1$.
Then $L_{2n-1}([\vf'])=0$ by \Cref{T:Lewis}. It follows that $[\vf]$ is a zero of the polynomial
\[\smash{L_{2n-1}(X-1)=(X-2n)X\prod_{i=1}^{n-1} (X^2-4i^2).}\]
This  implies the statement.
\end{proof}

For $n\in \N$ we denote by $d(n)$ the number of occurrences of the digit~$1$ in the binary representation of $n$. Note that $d(2n)=d(n)$ and $d(2n+1)=d(n)+1$.
In  \cite[\S4.4]{Conc} the following observation is attributed to Legendre.

\begin{prop}\label{P:Legendre}
For $n\in\nat$ the largest $2$-power dividing $n!$ is equal to $2^{n-d(n)}$.
\end{prop}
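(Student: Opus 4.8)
The plan is to argue by induction on $n$, directly exploiting the two recursions $d(2n)=d(n)$ and $d(2n+1)=d(n)+1$ recorded just above. For $m\in\nat$ write $v_2(m)$ for the exponent of the largest $2$-power dividing $m$; the assertion to be proved is then that $v_2(n!)=n-d(n)$. The cases $n=0$ and $n=1$ are immediate, since $0!=1!=1$ while $d(0)=0$ and $d(1)=1$.

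For the inductive step I would split according to parity and use the factorization
\[
(2n)!\,=\,\Big(\prod_{j=1}^{n}2j\Big)\cdot\Big(\prod_{j=1}^{n}(2j-1)\Big)\,=\,2^{n}\,n!\,\prod_{j=1}^{n}(2j-1),
\]
whose last factor is odd; hence $v_2((2n)!)=n+v_2(n!)$. Likewise $v_2((2n+1)!)=v_2((2n)!)$, because $2n+1$ is odd. Feeding in the induction hypothesis $v_2(n!)=n-d(n)$, the first identity gives $v_2((2n)!)=2n-d(n)=2n-d(2n)$, and the second gives $v_2((2n+1)!)=2n-d(n)=(2n+1)-(d(n)+1)=(2n+1)-d(2n+1)$. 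This closes the induction and proves the proposition.

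There is no real obstacle here; the only point requiring a moment's care is the even/odd case distinction, which is precisely what the two recursions for $d$ encode. For completeness I would also mention the alternative route via Legendre's formula $v_2(n!)=\sum_{i\geq 1}\lfloor n/2^{i}\rfloor$ — itself obtained by counting, for each $i$, the $\lfloor n/2^{i}\rfloor$ multiples of $2^{i}$ in $\{1,\dots,n\}$ and summing the valuations $v_2(k)=\#\{i\geq 1\mid 2^{i}\mid k\}$ — followed by the observation that, writing $n=\sum_{i}\ve_{i}2^{i}$ in binary so that $\lfloor n/2^{i}\rfloor=\sum_{j\geq i}\ve_{j}2^{j-i}$, an interchange of summation yields $\sum_{i\geq 1}\lfloor n/2^{i}\rfloor=\sum_{j}\ve_{j}(2^{j}-1)=n-d(n)$.
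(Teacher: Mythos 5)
Your proof is correct, but it takes a different route from the paper's. The paper works factor by factor: it observes that for each $i$ the $2$-adic valuation of $i$ equals $d(i-1)-d(i)+1$ (the number of trailing $1$'s in the binary expansion of $i-1$), and then the sum $\sum_{i=1}^{n}\bigl(d(i-1)-d(i)+1\bigr)$ telescopes to $n-d(n)$. You instead recurse on the factorial as a whole, splitting $(2n)!$ into its even part $2^{n}\,n!$ and an odd part, which gives $v_2((2n)!)=n+v_2(n!)$ and $v_2((2n+1)!)=v_2((2n)!)$; the recursions $d(2n)=d(n)$ and $d(2n+1)=d(n)+1$ then close the induction. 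Both arguments are short and elementary. The paper's version isolates the per-integer identity $v_2(i)=d(i-1)-d(i)+1$, which is a slightly finer piece of information than the statement itself; yours avoids ever computing $v_2(i)$ for individual $i$ and leans only on the digit-count recursions already recorded in the text, which makes it arguably the more self-contained of the two. Your appended derivation of Legendre's formula $v_2(n!)=\sum_{i\geq 1}\lfloor n/2^{i}\rfloor$ and the digit-sum identity is also correct, though redundant given the inductive argument. One small housekeeping point: your inductive step derives the claim for $2n$ and $2n+1$ from the claim for $n$, so you should phrase it as strong (complete) induction, noting that every $m\geq 2$ has the form $2n$ or $2n+1$ with $1\leq n<m$; as written this is implicit but worth a clause.
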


\begin{proof}  Let $n\in\nat$.
The largest $2$-power dividing $n$ is  $2^m$ where $m$ is the number of consecutive digits~$1$ at the end of the binary representation of $n-1$, whereby  $m=d(n-1)-d(n)+1$. 
Hence the largest $2$-power dividing $n!$ is $2^k$, where $k= \sum_{i=1}^n \bigl(d(i-1)-d(i)+1\bigr)=n-d(n)$.
\end{proof}

For $n\geq 1$ we  set
$\Delta(n)=2n-1-d(n)-d(n-1)$. 

\begin{thm}
\label{P:Lou}
Let $\vf$ and $\pi$ be quadratic forms over $K$ such that $\vf\ox \vf\ox \pi$ is hyperbolic.
Then $2^{\Delta(n)}\x \vf\ox\pi$ is hyperbolic for $n=\dim(\vf)$.
\end{thm}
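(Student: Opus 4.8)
The plan is to reduce the statement to a computation in the Witt ring $W\!K$ and then apply Corollary~\eqref{C:alpha-mult}. Set $\alpha=[\vf]\cdot[\pi]\in W\!K$. The hypothesis says that $\vf\ox\vf\ox\pi$ is hyperbolic, i.e. $[\vf]^2\cdot[\pi]=0$ in $W\!K$; writing this as $[\vf]\cdot\alpha=0$, we see that $\alpha$ is annihilated by $[\vf]$. Since $\vf$ has dimension $n$, Theorem~\eqref{T:Lewis} gives $L_n([\vf])=0$ in $W\!K$. The idea is to exploit the factorization of $L_n(X)$: after scaling $\vf$ (which changes neither $\vf\ox\vf$ nor, up to the relevant reductions, the conclusion, since $2^{\Delta(n)}\x\vf\ox\pi$ hyperbolic is equivalent to $2^{\Delta(n)}\x(a\vf)\ox\pi$ hyperbolic once one is careful — more precisely one scales so that $\vf=\la 1\ra\perp\vf'$ exactly as in the proof of \eqref{C:alpha-mult}), we may assume $\vf=\la 1\ra\perp\vf'$ with $\dim\vf'=n-1$, so $L_{n-1}([\vf']) = L_{n-1}([\vf]-1)=0$.

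The key step is then the polynomial identity $L_{n-1}(X-1)=(X-n)\,X\cdot g(X)$ for a suitable polynomial $g(X)$, together with an accounting of the $2$-adic valuation of $g(1)$ (equivalently, of the constant term, since the relevant evaluation is at the class of $\la 1\ra$). Concretely, $L_{n-1}(X-1)=\prod_{i=0}^{n-1}(X-n+2i)$ vanishes at $X=[\vf]$, and separating the factors $i=0$ (giving $X-n$) and the factor that produces $X$ from the middle, one obtains that $[\vf]$, hence $\alpha=[\vf][\pi]$, is annihilated by the integer which is the product of the remaining factors evaluated appropriately. Tracking which factors contribute, this integer is (up to sign) $2^{n-1}n!(n-1)!$ divided by the power of $2$ extracted from the two distinguished linear factors, but the sharper bookkeeping via the Legendre-type formula $v_2(n!)=n-d(n)$ shows the surviving $2$-power is exactly $2^{\Delta(n)}$ with $\Delta(n)=2n-1-d(n)-d(n-1)$. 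Thus $2^{\Delta(n)}\cdot\alpha$ lies in the ideal generated by $[\vf]$ times the other (odd) factors — but more directly one shows $2^{\Delta(n)}\alpha$ is a $W\!K$-multiple of $[\vf]\cdot\alpha=0$, hence $2^{\Delta(n)}\alpha=0$, i.e. $2^{\Delta(n)}\x\vf\ox\pi$ is hyperbolic.

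More carefully, the cleanest route: from $L_{n-1}([\vf'])=0$ and the identity $L_n(X)=(X+n)L_{n-1}(X-1)$ one has, writing $L_{n-1}(X-1)=(X-n)X\prod_{i=1}^{?}(\dots)$ — the exact factorization depends on the parity of $n$, exactly as in \eqref{C:alpha-mult} and \eqref{C:zero-div} where $L_{2n-1}$ and $L_{2n+1}$ were split into a linear times a product of $X^2-k^2$ terms. In each parity case one isolates the factor $X$ and the factor $X-n$ (or $X+n$), multiplies the relation $L_n([\vf])=0$ by $[\pi]$, and uses $[\vf]^2[\pi]=0$ to kill all terms of degree $\geq 2$ in $[\vf]$; what remains is $c\cdot[\vf][\pi]=0$ where $c$ is an explicit integer. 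Computing $v_2(c)$ via $v_2(m!)=m-d(m)$ yields $v_2(c)=\Delta(n)$ after removing the odd part (which we may divide out by \eqref{C:Scharlau-2prim}, since $[\vf][\pi]$ is then torsion). Therefore $2^{\Delta(n)}[\vf][\pi]=0$ in $W\!K$, which is precisely the assertion that $2^{\Delta(n)}\x\vf\ox\pi$ is hyperbolic.

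The main obstacle I anticipate is the bookkeeping: making the scaling reduction rigorous (one must check the conclusion is genuinely scaling-invariant, or argue directly with $L_n$ without the reduction), correctly handling the two parity cases in the factorization of $L_n$, and above all carrying out the $2$-adic valuation count so that the answer comes out as $\Delta(n)=2n-1-d(n)-d(n-1)$ and not merely $O(n)$. The Legendre formula $v_2(m!)=m-d(m)$ proved just above the statement is clearly the tool intended for this last point, and the identities $d(2m)=d(m)$, $d(2m+1)=d(m)+1$ will be needed to simplify; the passage to the odd part via Scharlau's $2$-primary torsion result \eqref{C:Scharlau-2prim} is what lets one discard the odd factors for free.
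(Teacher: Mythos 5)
Your overall strategy is the paper's own: pass to the Witt ring, apply \eqref{C:alpha-mult} (equivalently, Lewis's relation for the scaled form $\la 1\ra\perp\vf'$ and the factorization of $L_{n-1}(X-1)$), multiply by $[\pi]$ so that everything divisible by $[\vf]^2[\pi]=0$ vanishes, and then strip off the odd part of the resulting annihilating integer using \eqref{C:Scharlau-2prim} and Legendre's formula $v_2(m!)=m-d(m)$. The $2$-adic count does come out to $\Delta(n)$ exactly as you predict: for $n=2k$ the integer $2^{2k-1}k!(k-1)!$ from \eqref{C:alpha-mult} has $2$-adic valuation $(2k-1)+(k-d(k))+(k-1-d(k-1))=4k-2-d(k)-d(k-1)=\Delta(2k)$, using $d(2k)=d(k)$ and $d(2k-1)=d(k-1)+1$.

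The genuine gap is that you never establish that $n=\dim(\vf)$ is even, and your proposed treatment of the odd case rests on a false premise. \eqref{C:alpha-mult} applies only to even-dimensional forms, and for odd $n$ the polynomial $L_{n-1}(X-1)=\prod_{i=0}^{n-1}(X-n+2i)$ has \emph{no} factor $X$ (all its roots $n-2i$ are odd), so there is no ``factor $X$ to isolate'' in either parity case you describe; expanding instead yields $c_0[\pi]+c_1[\vf][\pi]=0$ with $c_0$ odd, which is not the relation you need. The paper closes this by first disposing of the trivial case where $\pi$ is hyperbolic and then invoking \eqref{C:zero-div}: if $\pi$ is not hyperbolic, $[\vf]^2$ is a zero-divisor in $W\!K$, hence lies in $I\!K$, forcing $\dim(\vf)$ even. (Alternatively, for odd $n$ you could multiply your relation once more by $[\vf]$ to get $c_0[\vf][\pi]=0$ with $c_0$ odd and conclude $[\vf][\pi]=0$ from \eqref{C:Scharlau-2prim}; some such step is needed and is missing.) A smaller issue: your first-sketched route, expanding $L_n([\vf])\cdot[\pi]=0$ directly, gives for $n=2k$ the linear coefficient $\pm 2^{2k}(k!)^2$, whose $2$-adic valuation is $4k-2d(k)=\Delta(n)+1+v_2(k)>\Delta(n)$; only the route through \eqref{C:alpha-mult}, i.e.\ through $L_{n-1}(X-1)$ for the scaled form, yields the stated exponent.
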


\begin{proof}
Assume that $\pi$ is not hyperbolic, as otherwise the statement is trivial. 
Then $n=\dim(\vf)=2k$ for some $k\in\nat$, by \Cref{C:zero-div}.
It follows from \Cref{C:alpha-mult} that the form $2^{2k-1}k!(k-1)!\times \varphi\otimes\pi$ is hyperbolic.
By \Cref{C:Scharlau-2prim} the order of $[\vf\otimes\pi]$ in $WK$ is a $2$-power and in view of \Cref{P:Legendre} the largest $2$-power dividing $2^{2k-1}k!(k-1)!$
is $2^m$ where $$m=(2k-1)+k-d(k)+k-1-d(k-1)=4k-1-d(2k)-d(2k-1)=\Delta(n).$$ 
Hence $2^{\Delta(n)}\x \vf\ox\pi$ is hyperbolic.
\end{proof}

\begin{rem}
In view of \Cref{P:Lou} we may define a function $g: \N\too \N$ in the following way: for $k\in \N$, let $g(k)$ be the smallest number $m\in \N$ such that, for any quadratic form $\vf$ of dimension $2k$ over an arbitrary field of characteristic different from $2$ for which  $\vf\ox\vf$ is hyperbolic, also $2^m\x \vf$ is hyperbolic.
Applying \Cref{P:Lou} with $\pi=\la 1\ra$ yields that  $g(k)\leq \Delta(2k)=4k-2-d(k)-d(k-1)$. This bound, however, does not seem to be optimal for $k>1$. In fact, it is not difficult to show that $g(2)=g(3)=2$.
\end{rem}

%%%%%%%%%%%%%%%%%%%%%%%%%%%%%%%%%%%%%%%%%%%%%%%%%%%%%
\section{Algebras with involution} %%%
%%%%%%%%%%%%%%%%%%%%%%%%%%%%%%%%%%%%%%%%%%%%%%%%%%%%%
\label{sec3}

Our general references for the theory of central simple algebras and their involutions are \cite{BOI} and \cite[Chap.~8]{Scharlau}.
We fix some terminology.

Let $K$ be a field
and let $A$ be a $K$-algebra.
We call $A$ a \emph{$K$-division algebra} if every non-zero element in $A$ is invertible.
We denote by $\Z(A)$ the centre of $A$.
We call the $K$-algebra $A$ \emph{central simple} if it is finite-dimensional over $K$, simple as a ring and such that $\Z(A)=K$.
A \emph{$K$-involution on $A$} is a $K$-linear map $\sigma : A\too A$ such that $\s(xy)=\s(y)\s(x)$ for all $x,y\in A$ and $\sigma\circ\sigma=\id_A$.

Assume now that $A$ is a central simple $K$-algebra. 
Wedderburn's Theorem says that in this case $\dim(A)=n^2$ for a positive integer $n$, called the \emph{degree of $A$} and denoted $\deg(A)$, and further that $A$ is isomorphic to a matrix algebra over a central simple $K$-division algebra $D$ unique up to $K$-isomorphism;
one says that $A$ is \emph{split} if $D=K$. If $A$ and $A'$ are matrix algebras over the same central simple $K$-division algebra, then $A$ and $A'$ are called \emph{Brauer equivalent} and we write $A\sim A'$.

A \emph{$K$-algebra with involution} is a pair  $(A,\s)$
where $A$ is a finite-dimensional $K$-algebra and  $\s$ is a $K$-involution on $A$ such that  $K=\{x\in \Z(A)\mid \s(x)=x\}$ and such that either $A$ is simple or $A$ is a product of two simple $K$-algebras that are mapped to each other by $\s$.
We will often denote a $K$-algebra with involution by a single capital Greek letter.

Let $(A,\s)$ be a $K$-algebra with involution.
We have either $\Z(A)=K$ or $\Z(A)$ is a quadratic \'etale extension of $K$.
We say that  $(A,\s)$ is of the \emph{first} or \emph{second kind}, depending on whether $[\Z(A):K]$ is $1$ or $2$, respectively.
Note that $A$ is simple if and only if $\Z(A)$ is a field.
If $\Z(A)$ is not a field then it is isomorphic to $F\times F$; 
this can only occur if $(A,\s)$ is of the second kind.
We have $\dim_K(A)=[\Z(A):K]\cdot n^2$ for a positive integer $n\in\nat$, which we call the \emph{degree of $(A,\s)$}; if $A$ is simple, then $\deg(A,\s)$ is just the degree of $A$ as a central simple $\Z(A)$-algebra.

We say that $x\in A$ is \emph{symmetric} or \emph{skew-symmetric} (\emph{with respect to $\s$}) if $\s(x)=x$ or $\s(x)=-x$, respectively.
We let 
$$\Sym{A, \sigma}=\{ x\in A\mid \sigma(x)=x\}\quad\mbox{ and
}\quad\Skew{A, \sigma}=\{x\in A\mid \sigma(x)= - x\}\,.$$
These are $K$-linear subspaces of $A$.

Assume from now on that the characteristic of $K$ is different from $2$.
Then
\[A=\Sym{A,\s}\oplus \Skew{A,\s}\]
and there exists $\ve\in\{-1,0,+1\}$ such that
 \[\dim_K(\Sym{A,\s})=\half n(n+\ve)\,\, \mbox{ and }  
\dim_K(\Skew{A,\s})=\half n(n-\ve)\,.\]
If $(A,\s)$ is of the first kind, then  $\ve=\pm 1$, and we say that  $(A,\s)$ is \emph{orthogonal} if $\ve=1$ and \emph{symplectic} if $\ve=-1$.
If $(A,\s)$ is of the second kind, then  $\ve=0$, and we  say that $(A,\s)$ is \emph{unitary}.
The integer $\ve$ is called the \emph{type of $(A,\s)$} and denoted  $\type(A,\s)$.
We say that $(A,\s)$ is \emph{unitary of inner type} when $Z(A)\simeq F\times F$.
(The term is motivated by a corresponding notion for algebraic groups.)

Following \cite[\S 12]{BOI}, given a $K$-algebra with involution $(A,\s)$, we denote 
\begin{eqnarray*}
\Sim(A,\s) &=& \{x \in \mg{A} \mid \s(x)x \in \mg K\}  \quad\text{and}\\
\G(A,\s) &= &\{ \s(x)x \mid x \in \Sim(A,\s) \}\,;
\end{eqnarray*}
note that these are subgroups of $\mg A$ and of $\mg K$, respectively.

Let $\Psi=(A,\s)$ be a $K$-algebra with involution. 
In this notational setting we denote the underlying $K$-algebra $A$ by $\underline{\Psi}$.
We say that $\Psi$ is \emph{split} if $\underline{\Psi}$ is isomorphic to a matrix algebra over $\Z(\underline{\Psi})$.

For any field extension $L/K$ the $L$-algebra with involution $(A\ox_K L,\s\ox \id_L)$ is denoted by $\Psi_L$.
Note that $\type(\Psi_L)=\type(\Psi)$ and that $\underline{\Psi_L}$ is simple if and only if $\underline{\Psi}$ is simple and 
$L$ is linearly disjoint to $\Z(\underline{\Psi})$ over $K$.

We now consider two $K$-algebras with involution $\Psi=(A,\s)$ and $\Theta=(B,\vt)$.
A \emph{homomorphism of algebras with involution} $\Psi \too \Theta$ is a $K$-homomorphism $f:A\too B$ satisfying $\vt\circ f = f \circ \s$. 
A homomorphism  is called an \emph{embedding} if it is injective and an \emph{isomorphism} if it is bijective.
 We write $\Psi \simeq \Theta$ if there exists and isomorphism $\Psi\too\Theta$.
(This occurs if and only if either $\Psi$ and $\Theta$ are adjoint to two similar hermitian or skew-hermitian forms over some $K$-division algebra with involution or $\Psi$ and $\Theta$ are both unitary of inner type
with $\underline{\Psi}\simeq\underline{\Theta}$.)

Except when $\Psi$ and $\Theta$ are both unitary with different centres, we can define 
their tensor product $\Psi\ox \Theta$.
If $\Psi$ and $\Theta$ are not both unitary, let $\Psi\ox \Theta$ denote the $K$-algebra with involution $(A\ox_K B,\s\ox\vt)$.
If $\Psi$ and $\Theta$ are both unitary and with same centre $L$, then $\Psi\ox \Theta$ is the unitary $K$-algebra with involution $(A\ox_L B,\s\ox\vt)$, whose centre is also $L$.
In particular, for a positive integer $n$ 
the tensor power $\Psi^{\ox n}$ of a $K$-algebra with involution $\Psi$ is defined. 
Furthermore, in each of the cases where we defined $\Psi\ox \Theta$ we have 
\[\deg(\Psi\ox\Theta)=\deg(\Psi)\cdot\deg(\Theta)
\quad \mbox{ and }\quad 
\type(\Psi\ox\Theta)=\type(\Psi)\cdot\type(\Theta) \,.\]

Let $\vf=(V,B)$ be a quadratic form over $K$. Consider the split central simple $K$-algebra $\End_K(V)$.
Let $\s:\End_K(V)\too \End_K(V)$ denote the involution determined 
by the formula
\[\qquad B(f(u), v) = B(u, \s(f)(v))\quad\text{ for all } u, v\in V\text{ and } f\in \End_K(V).\]
We  denote this involution $\s$ by $\ad_B$ and call it the \emph{adjoint involution of $\vf$}. Furthermore, we call $(\End_K(V),\ad_B)$ the \emph{adjoint algebra with involution of $\vf$} and denote  it by $\Ad(\vf)$. Note that $\Ad(\vf)$ is split orthogonal and determines $\vf$ up to similarity.

\begin{ex}
Let $n$ be a positive integer and $\vf=n\times \la 1\ra$, the $n$-dimensional form $\la 1,\dots,1\ra$ over $K$.
Then $\Ad(\vf)\simeq (\M_n(K), \tp)$ where $\M_n(K)$ is the $K$-algebra of $n\!\times\! n$-matrices over $K$ and $\tp$ is the transpose involution.
\end{ex}

\begin{prop}\label{P:ad-mult}
For quadratic forms $\vf$ and $\psi$ over $K$ we have $$\Ad(\vf\ox\psi)\simeq \Ad(\vf)\ox\Ad(\psi)\,.$$
\end{prop}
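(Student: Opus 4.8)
The plan is to establish the isomorphism $\Ad(\vf \ox \psi) \simeq \Ad(\vf) \ox \Ad(\psi)$ by exhibiting a concrete map between the underlying algebras and checking it is compatible with the adjoint involutions. Write $\vf = (V, B)$ and $\psi = (W, C)$, so that $\vf \ox \psi = (V \ox_K W, B \ox C)$, where $(B \ox C)(u \ox x, v \ox y) = B(u,v) \cdot C(x,y)$. The first step is to recall the canonical $K$-algebra isomorphism $\End_K(V \ox_K W) \simeq \End_K(V) \ox_K \End_K(W)$, which sends $f \ox g$ to the endomorphism $u \ox x \mapsto f(u) \ox g(x)$; this is a standard fact, bijective because both sides have dimension $(\dim V \cdot \dim W)^2$ and the map is injective.

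Next I would verify that under this identification the involution $\ad_{B \ox C}$ corresponds to $\ad_B \ox \ad_C$. It suffices to check this on elementary tensors $f \ox g$ with $f \in \End_K(V)$, $g \in \End_K(W)$, since these span. Using the defining formula for the adjoint, one computes for all $u,v \in V$ and $x,y \in W$:
\[
(B\ox C)\bigl((f\ox g)(u\ox x),\, v\ox y\bigr) = B(f(u),v)\,C(g(x),y) = B(u,\ad_B(f)(v))\,C(x,\ad_C(g)(y)),
\]
which equals $(B\ox C)\bigl(u\ox x,\, (\ad_B(f)\ox\ad_C(g))(v\ox y)\bigr)$. By regularity of $B \ox C$ and the uniqueness of the adjoint, this forces $\ad_{B\ox C}(f\ox g) = \ad_B(f) \ox \ad_C(g) = (\ad_B \ox \ad_C)(f \ox g)$, as desired. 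Hence the canonical algebra isomorphism intertwines the two involutions and is therefore an isomorphism of $K$-algebras with involution.

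I do not anticipate a genuine obstacle here: the result is essentially the functoriality of the adjoint construction under tensor products, and everything reduces to the bilinearity identities above together with the non-degeneracy of $B \ox C$ (which holds since $B$ and $C$ are regular). The only point demanding a little care is confirming that $\ad_B \ox \ad_C$ is indeed the well-defined involution $\s \ox \vt$ on $\End_K(V) \ox_K \End_K(W)$ in the sense of the tensor product of algebras with involution defined in Section~\ref{sec3} — but since neither $\Ad(\vf)$ nor $\Ad(\psi)$ is unitary (both are split orthogonal), that tensor product is exactly $(\End_K(V)\ox_K\End_K(W),\, \ad_B\ox\ad_C)$, so the identification is immediate. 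One could alternatively phrase the whole argument in terms of the similarity class of $\vf \ox \psi$ being recoverable from $\Ad(\vf\ox\psi)$, but the direct computation above is the cleanest route.
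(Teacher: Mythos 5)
Your proof is correct and follows the same route as the paper: the canonical isomorphism $\End_K(V)\ox_K\End_K(W)\simeq\End_K(V\ox_K W)$ intertwines the adjoint involutions, which the paper asserts in one line and you verify explicitly on elementary tensors using the defining adjoint identity and the regularity of $B\ox C$. No gaps.
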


\begin{proof} Denoting $V$ and $W$ the underlying vector spaces of $\vf$ and $\psi$, respectively, the natural $K$-algebra isomorphism
$\End_K(V)\ox_K \End_K(W) \too \End_K(V \ox_K W)$ yields the required identification for the adjoint involutions.
\end{proof}

For
 a finite-dimensional $K$-algebra $A$ we denote by $\Trd_A:A\too \Z(A)$ its reduced trace map (cf. \cite[p.~5 and p.~22]{BOI}).

A \emph{$K$-quaternion algebra} is a central simple $K$-algebra of degree $2$.
Given a $K$-quaternion algebra $Q$, the map $\s:Q\too K, x\mapstoo \Trd_Q(x)-x$ is a $K$-involution, called the \emph{canonical involution on $Q$} and denoted by $\can_Q$; this is the unique symplectic $K$-involution on $Q$.
If $L$ is a quadratic \'etale extension of $K$ we denote by $\can_{L/K}$ the unique nontrivial $K$-automorphism of $L$.
We further set $\can_K=\id_K$. We call $(A,\s)$ a \emph{$K$-algebra with canonical involution} if it is of one of the forms $(K,\id_K)$, $(L,\can_{L/K})$ for a quadratic \'etale extension $L/K$, or $(Q,\can_Q)$ for a $K$-quaternion algebra $Q$.

\begin{prop}\label{P:can-invol}
Let $(A,\s)$ be a $K$-algebra with involution.
We have that $\Sym{A,\s}=K$ if and only if $(A,\s)$ is a $K$-algebra with canonical involution.
\end{prop}

\begin{proof}
If $\Sym{A,\s}=K$ then $\dim_K(A)=2^{1-\ve}$ for $\ve=\type(A,\s)$, so that $A$ is either $K$, a quadratic \'etale extension of~$K$, or a $K$-quaternion algebra.
In any of these three cases, the canonical involution of $A$ is the unique $K$-involution of type $\ve$, hence it is therefore equal to $\s$ and $\Sym{A,\s}=K$.
\end{proof}

The following statement goes back to Jacobson \cite{Jac40}.

\begin{prop}\label{P:JAC}
Let $\Psi$ be a  $K$-algebra with involution. 
Then $\Psi\simeq \Ad(\vf)\ox \Phi$ for a $K$-algebra with canonical involution~$\Phi$ and a quadratic form $\vf$ over~$K$ if and only if $\Psi$ is either split or symplectic of index $2$.
\end{prop}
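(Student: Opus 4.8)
The plan is to prove the two implications separately. The main tool in both is the standard description, for a central simple algebra of index at most~$2$, of its involutions as adjoint involutions of ($\ve$-)hermitian forms over a $K$-division algebra (or a quadratic étale $K$-algebra) with involution (cf.~\cite[\S4]{BOI}), together with the decisive elementary fact, provided by \eqref{P:can-invol}, that a canonical involution has symmetric part exactly~$K$. For necessity, assume $\Psi\simeq\Phi\ox\Ad(\vf)$ with $\Phi=(A_0,\s_0)$ a $K$-algebra with canonical involution and $\vf$ a quadratic form over $K$ on a vector space $V$. By \eqref{P:can-invol} the algebra $A_0$ is $K$, a quadratic étale extension of $K$, or a quaternion $K$-algebra, and in the last case $\s_0=\can_{A_0}$ is symplectic. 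Since $\Ad(\vf)$ is split orthogonal we have $\type(\Ad(\vf))=1$, hence $\type(\Psi)=\type(\Phi)$, and since $\End_K(V)$ is split we have $\ind(\Psi)=\ind(A_0)$. But $\ind(A_0)\le 2$, with equality only when $A_0$ is a non-split quaternion algebra, in which case $\type(\Psi)=\type(\s_0)=-1$. Hence $\Psi$ is split, or else symplectic of index~$2$.

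For sufficiency, assume $\Psi=(A,\s)$ is split or symplectic of index~$2$ and argue according to $\type(\Psi)$. If $\Psi$ is orthogonal it is split, and every split orthogonal $K$-algebra with involution is of the form $\Ad(\vf)=(K,\id)\ox\Ad(\vf)$ for a quadratic form $\vf$ over $K$; here $(K,\id)$ is a $K$-algebra with canonical involution. If $\Psi$ is unitary it is again split; put $L=Z(\Psi)$. Assuming first that $\Psi$ is non-degenerate, $\s$ is isomorphic to the adjoint involution of a hermitian form $h$ over $(L,\can_L)$; since $\Sym{L,\can_L}=K$, every value $h(x,x)$ lies in $K$, so $h$ diagonalises as $\la a_1,\dots,a_m\ra$ with $a_1,\dots,a_m\in\mg K$, and arguing as for \eqref{P:ad-mult} the adjoint algebra with involution of this diagonal form is $(L,\can_L)\ox\Ad(\la a_1,\dots,a_m\ra)$. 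Thus $\Psi\simeq(L,\can_L)\ox\Ad(\la a_1,\dots,a_m\ra)$, with $(L,\can_L)$ a $K$-algebra with canonical involution. If $\Psi$ is degenerate unitary, then, being split, it is isomorphic to the unique degenerate unitary $K$-algebra with involution of its degree $m$, namely $(K\x K,\can_{K\x K})\ox\Ad(m\x\la 1\ra)$. Finally, if $\Psi$ is symplectic then $\ind(\Psi)\le 2$, so $A\simeq\M_m(Q)$ for a quaternion $K$-algebra $Q$; endow $Q$ with $\can_Q$, which is symplectic, so that $\s$ is isomorphic to the adjoint involution of an $\ve$-hermitian form $h$ over $(Q,\can_Q)$. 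Comparing types, $\type(\s)=\ve\cdot\type(\can_Q)=-\ve$; as $\type(\s)=-1$ this forces $\ve=1$, so $h$ is hermitian. Again $\Sym{Q,\can_Q}=K$, so $h$ diagonalises as $\la a_1,\dots,a_m\ra$ with $a_i\in\mg K$, and $\Psi\simeq(Q,\can_Q)\ox\Ad(\la a_1,\dots,a_m\ra)$. (When $\Psi$ is additionally split one may take $Q=\M_2(K)$ with its symplectic involution, which again has symmetric part~$K$; equivalently, all symplectic involutions on a split $K$-algebra are mutually isomorphic.)

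I expect the main difficulty to be organisational rather than conceptual: one must invoke the correct adjoint-form presentation in each of the orthogonal, unitary and symplectic cases, handle the degenerate unitary and split symplectic situations (where $\Phi$ must be built from $K\x K$, respectively a split quaternion algebra), and—in the symplectic case—verify via a type comparison that the relevant form is hermitian rather than skew-hermitian. Granting this bookkeeping, the core of the proof is soft: a canonical involution fixes precisely the scalars, so the relevant hermitian form takes all its diagonal values in~$K$, hence diagonalises over~$K$, and the entries of a diagonalisation are exactly the coefficients of the quadratic form~$\vf$.
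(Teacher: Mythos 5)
Your proof is correct and takes essentially the same route as the paper's: in both, the key observation is that $\Psi\sim\Phi$ for a $K$-algebra with canonical involution $\Phi$, so $\Psi$ is adjoint to a hermitian form over $\Phi$ whose diagonal entries lie in $\Sym{\Phi}=K$, yielding $\Psi\simeq\Phi\ox\Ad(\vf)$. Your case-by-case treatment (including the type comparison forcing the form to be hermitian rather than skew-hermitian, and the degenerate unitary and split symplectic cases) merely makes explicit the details the paper compresses into the relation $\Psi\sim\Phi$.
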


\begin{proof}
Clearly, any $K$-algebra with canonical involution $\Phi$ is 
either split or symplectic of index $2$, and thus so is $\Phi\otimes \Ad(\vf)$ for any quadratic form $\vf$ over $K$.
Assume now that $\Psi$ is either split or symplectic of index $2$.
Then  $\Psi$ is adjoint to a hermitian form over a $K$-algebra with canonical involution $\Phi$.
Since such a form has a diagonalisation with entries in $\Sym{\Psi}=K$,
we obtain that $\Psi\simeq \Ad(\vf)\ox\Phi$ for a form $\vf$ over $K$.
 \end{proof}

For computational purposes we augment the classical notation for  quaternion algebras
in terms of pairs of field elements to take into account an involution. Let $a,b\in \mg K$ and let $Q$ be the $K$-algebra with basis $(1,i,j,k)$, where $i^2=a$, $j^2=b$ and $ij=-ji=k$. This quaternion algebra is denoted by $(a,b)_K$. For $\delta, \ve \in \{+1,-1\}$ there is a unique $K$-involution $\s$ on $Q$ such that $\s(i)=\delta i$ and $\s(j)=\ve j$. We denote the pair $(Q,\s)$ by  
\begin{eqnarray*}
(a\qi b)_K & \text{ if } & \delta=+1,\ \ve=+1,\\
(a\qil b)_K &  \text{if} & \delta=-1,\  \ve=+1,\\
(a\qir b)_K &  \text{if} & \delta=+1,\  \ve=-1,\\
(a\qilr b)_K & \text{if} & \delta=-1,\ \ve=-1.
\end{eqnarray*}
In particular,  $(a\qilr b)_K$ denotes the quaternion algebra $(a,b)_K$ together with its canonical involution. 
Any $K$-quaternion algebra with orthogonal involution is isomorphic to $(a \qil b)_K$ for some $a,b \in \mg K$.
Note that $(a \qi b)_K \simeq (-ab \qil b)_K$ and $(a\qir b)_K \simeq (b \qil a)_K$ for any $a,b \in \mg K$. 
Hence the presence or absence of a dot indicates on which quadratic \'etale $K$-subalgebra the involution restricts to the nontrivial automorphism or to the identity, respectively.

\begin{prop}\label{P:quat-qil-rep}
Let $Q$ be a $K$-quaternion algebra. Let $a,b\in\mg{K}$ be such that $Q\simeq (a,b)_K$. Then $(Q,\can_Q)\simeq (a\qilr b)_K$.
Moreover, for $i\in\mg{Q}\setminus\mg{K}$ with $i^2=a$ and $\tau=\Int(i)\circ\can_Q$ we have  $(Q,\tau)\simeq (a\qil b)_K$.
 \end{prop}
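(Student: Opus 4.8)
The plan is to handle the two assertions separately; the first is essentially a formality, and the second reduces, after one application of the Skolem--Noether theorem, to a short check on a standard quaternion basis. For the first assertion, I would fix any $K$-algebra isomorphism $f\colon Q\too(a,b)_K$, which exists by hypothesis. Since the canonical involution of a $K$-quaternion algebra is defined purely through the reduced trace, and since the reduced trace of an element lies in the base field and is therefore preserved by every $K$-algebra isomorphism, one gets $f\circ\can_Q=\can_{(a,b)_K}\circ f$ for free. Hence $f$ is already an isomorphism of $K$-algebras with involution $(Q,\can_Q)\too(a\qilr b)_K$.

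For the second assertion, I would first record that $i$ is a pure quaternion. Its reduced characteristic polynomial is monic of degree $2$ and annihilates $i$, while $i$ is also a root of $X^2-a$; subtracting and using $i\notin K$ forces $\Trd_Q(i)=0$ (and $\Nrd_Q(i)=-a$), whence $\can_Q(i)=-i$. In particular $\tau=\Int(i)\circ\can_Q$ is a $K$-involution: it is $K$-linear and anti-multiplicative, being the composite of an inner automorphism with an anti-automorphism, and substituting $\can_Q(i)=-i$ (hence $\can_Q(i^{-1})=-i^{-1}$) gives $\tau\circ\tau=\id_Q$. Next I would produce a standard quaternion basis of $Q$ containing $i$: writing $Q\simeq(a,b)_K$ with standard generators $i_0,j_0$, the \'etale $K$-subalgebras $K[i]$ and $K[i_0]$ are both $K$-isomorphic to $K[X]/(X^2-a)$, so by Skolem--Noether there is $u\in\mg Q$ with $ui_0u^{-1}=i$; setting $j=uj_0u^{-1}$ yields $j^2=b$ and $ij=-ji$, and $\can_Q(j)=-j$ by the same argument as for $i$.

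Finally, the assignment $i'\mapstoo i$, $j'\mapstoo j$ on the standard generators extends to a nonzero $K$-algebra homomorphism $(a,b)_K\too Q$, hence to an isomorphism, as $(a,b)_K$ is simple and both algebras have dimension $4$. It remains to see that this isomorphism carries the involution of $(a\qil b)_K$, namely the one sending $i'\mapstoo-i'$ and $j'\mapstoo j'$, onto $\tau$; since two $K$-involutions agreeing on a generating set coincide, it suffices to verify $\tau(i)=-i$ and $\tau(j)=j$. The first is $\tau(i)=i\,\can_Q(i)\,i^{-1}=i(-i)i^{-1}=-i$, and the second is $\tau(j)=i\,\can_Q(j)\,i^{-1}=-\,iji^{-1}=-(-j)=j$, using $iji^{-1}=-j$. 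I expect the only real obstacle to be the passage from an arbitrary $i$ with $i^2=a$ to an anticommuting $j$ with the prescribed square $b$; Skolem--Noether disposes of this cleanly, and the argument is uniform over the split and division cases because $K[X]/(X^2-a)$ is \'etale over $K$ whether or not $a$ is a square.
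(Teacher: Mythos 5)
Your proof is correct, but the second assertion is handled by a genuinely different route than the paper's. For $(Q,\can_Q)\simeq(a\qilr b)_K$ both arguments are the same formality (the paper phrases it as ``$\can_Q$ is the only symplectic involution'', you phrase it via preservation of the reduced trace). For the second assertion, you transport a standard quaternion pair $(i_0,j_0)$ of $(a,b)_K$ onto $i$ by an inner automorphism obtained from Skolem--Noether applied to the isomorphism $K[i_0]\to K[i]$, and then check $\tau$ on the resulting generators. The paper instead stays inside $Q$ and uses the quadratic form $B(x,y)=\can_Q(x)y$: writing $V=\{j\in Q\mid ij+ji=0\}$ for the orthogonal complement of $K[i]$, it computes $(Q,B)\simeq\lla a,b\rra$ and $(V,B|_V)\simeq -b\lla a\rra$ by Witt cancellation, so $-b$ is represented on $V$ and hence some $j\in V$ has $j^2=b$; the verification of $\tau(i)=-i$, $\tau(j)=j$ is then the same as yours. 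The quadratic-form route buys a uniform argument with no case distinction; your route is more algebraic but puts weight on one point you rightly flag: when $a\in\sq K$ (so $Q$ is split and $K[i]\simeq K\times K$), the subalgebras $K[i_0]$ and $K[i]$ are \'etale but not simple, so the textbook Skolem--Noether theorem does not literally apply. The conjugacy still holds here because a unital embedding of $K\times K$ into a degree-$2$ algebra forces both idempotents to have rank $1$, and rank-$1$ idempotents of $\M_2(K)$ are conjugate; you should either include that short argument or cite a version of Skolem--Noether for \'etale subalgebras. With that point made explicit, your proof is complete.
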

 
\begin{proof}
Since $\can_Q$ is the only symplectic involution on $Q$, any $K$-isomorphism $Q\too (a,b)_K$ is also an isomorphism of $K$-algebras with involution. Hence, $(Q,\can_Q) \simeq (a\qilr b)_K$.
We choose an element $i\in\mg{Q}\setminus\mg{K}$ such that $i^2=a$.
Then $V=\{j\in Q\mid ij+ji=0\}$ is the orthogonal complement of $K[i]$ in $Q$ with respect to
the symmetric $K$-bilinear form $B: Q\x Q\too K, (x,y)\mapstoo \frac 12\Trd_Q(\can_Q(x)\cdot y)$.
By \cite[Chap.~2, (11.4)]{Scharlau}
we obtain that
$(Q,B)\simeq \lla a,b\rra$. Since
$(K[i], B|_{K[i]})\simeq \lla a\rra$ it follows that 
$(V, B|_{V})\simeq -b\lla a\rra$.
As for any $j\in V$ we have $B(j,j)=-j^2$, there exists an element $j\in V$ with $j^2=b$.
For $\tau=\Int(i)\circ\can_Q$ we obtain that $\tau(i)=-i$ and $\tau(j)=j$, whereby $(Q,\tau)\simeq (a\qil b)_K$.
\end{proof}

For $a\in \mg K$, let $(a)_K$ denote the unitary $K$-algebra with canonical involution $(L,\can_{L/K})$ where
$L = K[X]/(X^2-a)$; it is of inner type 
if and only if $a\in\sq{K}$.

\begin{cor}\label{C:quat-awi-class}
Let $\Phi$ be a $K$-quaternion algebra with involution.
If $\Phi$ is orthogonal there exist $a,b\in\mg{K}$ such that $\Phi\simeq (a\qil b)_K$.
If $\Phi$ is symplectic there exist $a,b\in\mg{K}$ such that $\Phi\simeq (a\qilr b)_K$.
If $\Phi$ is unitary, there exist $a,b,c\in\mg{K}$ such that $\Phi\simeq (a\qilr b)_K\ox (c)_K$.
\end{cor}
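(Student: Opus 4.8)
The plan is to treat the three cases of $\Phi$ according to its kind, using the structure results already established. For the \emph{orthogonal} and \emph{symplectic} cases, the statement is just a recollection of \eqref{P:quat-qil-rep} together with the remark preceding it: any $K$-quaternion algebra with orthogonal involution is isomorphic to $(a\qil b)_K$ for suitable $a,b\in\mg K$ (obtained by conjugating the canonical involution by a pure quaternion, as in \eqref{P:quat-qil-rep}), while a quaternion algebra with symplectic involution carries only the canonical involution by \eqref{P:can-invol}, so it is $(a\qilr b)_K$ once one writes the underlying algebra as $(a,b)_K$.

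The substance is the \emph{unitary} case. Here $\Phi=(A,\s)$ with $A$ a $K$-quaternion algebra, so $L:=Z(A)$ is a quadratic \'etale extension of $K$, and $A$ is a quaternion algebra over $L$, but of $L$-index $1$ or $2$ depending on whether $[A]$ restricts trivially to $L$ or not. The idea is to decompose $\Phi$ as a tensor product of a quaternion $K$-algebra with canonical involution and the trivial unitary algebra $(c)_K$ for a suitable $c\in\mg K$ with $L=K[\sqrt c\,]$. First I would write $L=K[X]/(X^2-c)$ for some $c\in\mg K\setminus\sq K$, so that $(c)_K=(L,\can_L)$. Then I would show $\Phi\simeq \Phi_0\ox(c)_K$ where $\Phi_0$ is a $K$-quaternion algebra with symplectic (hence canonical) involution, $\Phi_0\simeq(a\qilr b)_K$ by the symplectic case. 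One natural route: the $2$-torsion class $[A]+[A]=0$ shows $A$ is Brauer-equivalent over $K$ to a quaternion $K$-algebra $Q$ (namely the corestriction, or directly: $A$ as a central simple $K$-algebra of degree $4$ with unitary involution is, by \cite[(2.22)]{BOI}-type decomposition, of the form $Q\ox_K L$ for a quaternion $K$-algebra $Q$); taking $\Phi_0=(Q,\can_Q)$ gives $\Phi_0\ox(c)_K\simeq(Q\ox_K L,\can_Q\ox\can_L)$, which is a unitary $K$-algebra with involution whose underlying algebra is $A$ (up to Brauer class over $L$, hence up to $L$-isomorphism since both are quaternion over $L$). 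It then remains to match the involutions: any two unitary involutions on the quaternion $L$-algebra $A$ differ by $\Int(u)$ for a symmetric unit $u$, and by adjusting the diagonalisation of the corresponding hermitian form over the quaternion algebra — whose entries can be taken in $\Sym{}=K$ by the argument in \eqref{P:JAC} — one absorbs the discrepancy into the choice of $a,b$.

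The main obstacle I expect is precisely this last matching step: showing that the canonical involution $\can_Q\ox\can_L$ on $Q\ox_K L$ is, up to isomorphism of $K$-algebras with involution and up to re-choosing the quaternion symbol, the \emph{only} unitary involution one needs to consider, i.e. that an arbitrary unitary $\s$ on $A$ can be brought to this form. The clean way is to invoke that $\Phi$, being non-degenerate with $\Sym{\Phi}\supseteq K$ and of degree $2$, is adjoint to a one-dimensional hermitian form over the $K$-algebra with canonical involution $(Q\ox_K L,\can_Q\ox\can_L)$ after a scalar extension argument — mirroring \eqref{P:JAC} but in the unitary setting — and that such a hermitian form is diagonalised by an element of $K$, which is then absorbed. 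Alternatively, one checks directly that $\Int(i)$ for $i\in Q$ pure and $\Int(\ell)$ for $\ell\in L\setminus K$ generate enough automorphisms to reduce $\s$ to $\can_Q\ox\can_L$ after replacing $(a,b)$. I would present the orthogonal and symplectic cases in one line each as consequences of \eqref{P:quat-qil-rep}, and devote the bulk of the proof to exhibiting the decomposition $A\simeq Q\ox_K L$ and then reconciling the involutions.
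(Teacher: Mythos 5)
Your handling of the orthogonal and symplectic cases is exactly the paper's: both reduce to \eqref{P:quat-qil-rep}, by taking $\s=\can_Q$ in the symplectic case and by choosing an invertible $i\in\Skew{Q,\s}$ with $a=i^2$ and $Q\simeq(a,b)_K$ in the orthogonal case. For the unitary case your route is also the paper's, namely the decomposition $\Phi\simeq(Q,\can_Q)\ox(c)_K$ obtained from \cite[(2.22)]{BOI}. The one substantive comment is that the ``main obstacle'' you isolate --- reconciling an arbitrary unitary $\s$ on $A$ with $\can_Q\ox\can_L$ --- is not actually an extra step: the cited proposition asserts the existence of a quaternion $K$-subalgebra $Q\subseteq A$ with $A=Q\ox_K L$ \emph{and} $\s=\can_Q\ox\can_L$ simultaneously, so once it is invoked there is nothing left to match. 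You should rely on it in full, as the paper does.

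This matters because the workarounds you sketch for the matching step would not go through as written. The appeal to a \eqref{P:JAC}-style diagonalisation ``with entries in $\Sym{A,\s}=K$'' fails here: for a unitary involution on a degree-$2$ algebra one has $\dim_K\Sym{A,\s}=4$, so $(A,\s)$ is not an algebra with canonical involution in the sense of \eqref{P:can-invol}, and \eqref{P:JAC} does not apply when $A$ is a division algebra over $L$ (it covers only the split and the symplectic-of-index-$2$ cases). Likewise the Brauer-theoretic phrasing is off: $A$ is central simple over $L$, not over $K$, so ``$[A]+[A]=0$ in $\Br(K)$'' and ``Brauer-equivalent over $K$'' do not make sense for $A$; the relevant condition is the vanishing of the corestriction of $[A]$ along $L/K$, and extracting $Q$ from that is precisely the content of \cite[(2.22)]{BOI}, not a shortcut around it. Finally, writing $L=K[X]/(X^2-c)$ with $c\notin\sq{K}$ silently excludes the degenerate unitary case, which the statement does not; taking $c\in\sq{K}$ there is harmless but should be said.
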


\begin{proof}
Assume that $\Phi$ is of the first kind and let $\Phi=(Q,\s)$.
If $\Phi$ is symplectic, then $\s=\can_Q$ and we choose $a,b\in\mg{Q}$ such that $Q\simeq (a,b)_K$ to obtain by \Cref{P:quat-qil-rep} that $\Phi\simeq (a\qilr b)_K$.
If $\Phi$ is orthogonal, we choose $i\in\Skew{Q,\s}\cap\mg{Q}$ and $a,b\in\mg{K}$ with   $a=i^2$  and $Q\simeq (a,b)_K$, and obtain that $\s=\Int(i)\circ\can_Q$, so that $\Phi\simeq (a\qil b)_K$ by \Cref{P:quat-qil-rep}.

Assume now that $\Phi$ is of the second kind. 
From \cite[(2.22)]{BOI} we obtain that $\Phi\simeq (Q,\can_Q)\ox (c)_K$ for a $K$-quaternion algebra $Q$ and an element $c\in\mg{K}$, and by the above there exist $a,b\in\mg{K}$ such that $(Q,\can_Q)\simeq (a\qilr b)_K$.
\end{proof}

\begin{prop}\label{P:-qil-characterisation}
Let $a,b,c,d \in \mg K$.
 We have  $(a \qil b)_K \simeq (c \qil d)_K$ if and only if $a \sq K =c \sq K$ and $bd \in \D_K\lla a\rra$.
 \end{prop}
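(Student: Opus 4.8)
\textbf{Proof plan for \eqref{P:-qil-characterisation}.}
The plan is to translate the isomorphism of quaternion algebras with involution into conditions on the underlying quaternion algebra and on a distinguished symmetric (or skew-symmetric) element, using the bilinear trace form $B(x,y)=\can_Q(x)y$ on $Q$ that already appeared in the proof of \eqref{P:quat-qil-rep}. First I would recall from that proof that if $Q\simeq(a,b)_K$ and $i\in\mg Q\setminus\mg K$ is the standard generator with $i^2=a$, then for $\tau=\Int(i)\circ\can_Q$ we have $(Q,\tau)\simeq(a\qil b)_K$, and moreover $\Skew{Q,\tau}\cap\mg Q$ contains $i$, with $i^2=a$. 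The key structural fact is that for an orthogonal involution $\tau$ on a quaternion algebra $Q$, the isomorphism class of $(Q,\tau)$ is determined by the class of $Q$ in $\br K$ together with the similarity class of the restriction of $B$ to (a complement related to) the skew-symmetric elements; concretely, $(Q,\tau)$ corresponds to $Q$ plus the $K$-isomorphism class of the one-dimensional form $\la a\ra$ up to squares, since the skew elements of $(a\qil b)_K$ are $K i\oplus Kj\oplus Kk$ — wait, more carefully, $\Skew{(a\qil b)_K}$ has dimension $3$ and the relevant invariant is the discriminant-type datum recorded by $i$ with $i^2=a$.

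The cleaner route, which I would follow, is via the norm form. For $\Phi=(a\qil b)_K$, pick $i$ with $\tau(i)=i$... no: $\tau(i)=\delta i=-i$ when the first slot is $\qil$, so $i\in\Skew\Phi$ and $j\in\Sym\Phi$ with $i^2=a$, $j^2=b$. Then $K[i]\simeq(a)_K$ embeds into $(Q,\tau)$ as a $\tau$-stable subalgebra on which $\tau$ acts as $\can$, and the centralizer considerations together with \cite[Chap.~2, (11.4)]{Scharlau} give $(Q,B_\tau)\simeq\lla a,b\rra$ where $B_\tau(x,y)=\tau(x)y$. So the invariants attached to $\Phi$ are: (1) the class $a\sq K\in\scg K$, recoverable as $i^2$ for any $i\in\Skew\Phi\cap\mg Q$ generating a quadratic subfield (or $Q$ split, handled separately); and (2) the Brauer class $[Q]=[(a,b)_K]$, equivalently the value $b\D_K\lla a\rra$ since $(a,b)_K\simeq(a,b')_K$ iff $b'\in b\cdot\D_K\lla a\rra\cdot\sq K=b\D_K\lla a\rra$ (as $\D_K\lla a\rra$ is a group containing $\sq K$). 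Then $(a\qil b)_K\simeq(c\qil d)_K$ forces $a\sq K=c\sq K$ by comparing the skew generators, and forces $[(a,b)_K]=[(c,d)_K]=[(a,d)_K]$, hence $bd=b\cdot d\in\D_K\lla a\rra$. Conversely, $a\sq K=c\sq K$ lets us assume $c=a$ after rescaling $i$, and $bd\in\D_K\lla a\rra$ gives $(a,b)_K\simeq(a,d)_K$, so we may choose compatible generators $j$ (resp.\ $j'$) with $j^2=b$, $(j')^2=d$ inside the \emph{same} $Q$; since an orthogonal involution on $Q$ is determined up to isomorphism by a skew generator, and both $\tau_b,\tau_d$ are $\Int(i)\circ\can_Q$ for the same $i$, they coincide, yielding the isomorphism.

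The main obstacle I anticipate is the first, ``forcing'' direction: extracting the invariant $a\sq K$ from $(Q,\tau)$ in a way that is manifestly isomorphism-invariant. An isomorphism $f\colon(a\qil b)_K\to(c\qil d)_K$ sends $i$ to some skew-symmetric unit $i'$ with $(i')^2=a$, but the target's skew-symmetric units need not all be scalar multiples of its standard generator — one must argue that \emph{every} element of $\Skew{(c\qil d)_K}$ that generates a quadratic \'etale subalgebra has square in $c\sq K$, equivalently that $\Skew\cap\mg Q$, after squaring, lands in a single square class. This is where I would invoke the description of $(Q,B_\tau)\simeq\lla c,d\rra$: the skew part carries the subform $\la -c\ra\ox\lla d\rra\simeq\la -c,cd\ra$... hmm, actually $\lla c,d\rra=\la 1,-c,-d,cd\ra$ and $\Sym$ contributes $\la 1\ra$ via $1$ and... let me just say: the restriction of $B_\tau$ to $\Skew$ is the pure part $\lla c,d\rra'=\la -c,-d,cd\ra$, and for $x\in\Skew$ one has $B_\tau(x,x)=\tau(x)x=-x^2$, so $x^2=-B_\tau(x,x)$; that an element $x$ generating a quadratic subfield has $x^2$ represented by $\la c,d,-cd\ra$ with value in the square class $c\sq K$ requires knowing which values in $D_K(\la c,d,-cd\ra)$ arise from such $x$, and pinning this down (e.g.\ that only the ``$\la c\ra$-part'' is achieved by elements anticommuting with a fixed symmetric generator) is the technical crux. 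Everything else is bookkeeping with $\D_K\lla a\rra$ being a group and with the classification of quaternion algebras by their norm forms.
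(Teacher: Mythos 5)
The central difficulty you anticipate --- that the skew-symmetric units of $(c\qil d)_K$ might spread over several square classes after squaring --- does not exist, because your dimension count is wrong. For an \emph{orthogonal} involution $\tau$ on a quaternion algebra $Q$ one has $\dim_K\Skew{Q,\tau}=\half n(n-\ve)=1$ (take $n=2$, $\ve=1$ in the formula from Section~\ref{sec3}); the three-dimensional space you describe is the skew part of the \emph{canonical} (symplectic) involution, i.e.\ the pure quaternions. Indeed, for $(a\qil b)_K$ one has $\tau(k)=\tau(ij)=\tau(j)\tau(i)=-ji=ij=k$, so $\Sym{Q,\tau}=K\oplus Kj\oplus Kk$ and $\Skew{Q,\tau}=Ki$ is a line. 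Hence every skew unit is a scalar multiple of $i$ and its square automatically lies in $a\sq{K}$; this single observation is exactly how the paper extracts the first invariant ($iK=\Skew{Q,\tau}=fK$, whence $a\sq{K}=c\sq{K}$). Your subsequent analysis of which values of $\la c,d,-cd\ra$ arise from elements anticommuting with a fixed symmetric generator is solving a problem that is not there, and as written your plan does not close the necessity direction.

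Once this is repaired your route is viable and close in spirit to the paper's, though the paper is more elementary at two points. For the condition $bd\in\D_K\lla a\rra$ you pass through the classification of quaternion algebras by Brauer class ($(a,b)_K\simeq(a,d)_K$ if and only if $bd\in\D_K\lla a\rra$); the paper instead takes symmetric units $j,g$ with $j^2=b$, $g^2=d$, both anticommuting with $i$, notes that $jg$ commutes with $i$ and so lies in the maximal commutative subalgebra $K[i]$, writes $jg=x+iy$, and computes $bd=j^2g^2=(x-iy)(x+iy)=x^2-ay^2$ directly. For sufficiency, where you invoke Skolem--Noether together with \eqref{P:quat-qil-rep}, the paper simply writes $d=b(u^2-av^2)$ and exhibits $g=uj+vij$, which is symmetric, anticommutes with $i$, and satisfies $g^2=d$. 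Your substitutes are correct but import external facts that the direct computation avoids.
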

 
\begin{proof}
We set $(Q,\tau)=(a\qil b)_K$.
There exist $i\in\Skew{Q,\tau}$ and $j\in\Sym{Q,\tau}$ with $i^2=a$, $j^2=b$, and $ij+ji=0$.

Assuming that $bd\in \D_K\lla a\rra$, we may write $d=b(u^2-av^2)$ with $u,v\in K$ and obtain for $g=uj+vij$ that $g\in \Sym{Q,\tau}$, $gi+ig=0$, and $g^2=d$.
If further $c\sq{K}=a\sq{K}$, then $c=f^2$ for some $f\in i\mg{K}$, and we have $f\in\Skew{Q,\tau}$ and $gf+fg=0$, and conclude that $(Q,\tau)\simeq  (c\qil d)_K$.

For the converse, suppose that
$(Q,\tau)\simeq  (c\qil d)_K$. There exist $f\in\Skew{Q,\tau}$ and $g\in\Sym{Q,\tau}$ with $f^2=c$, $g^2=d$, and $fg+gf=0$.
It follows that $i K=\Skew{Q,\tau}=f K$, so that $a\sq{K}=c\sq{K}$. Moreover, $ig+gi=0$ and $jgi=ijg$.
As $K[i]$ is a maximal commutative $K$-subalgebra of $Q$, we obtain that $jg\in K[i]$.
Writing $jg=x+iy$ with $x,y\in K$, we obtain that $$bd=j^2g^2=j(x+iy)g=(x-iy)jg=(x-iy)(x+iy)=x^2-ay^2\,,$$ whence 
$bd\in D_K\lla a\rra$.
\end{proof}

\begin{rem}
For a $K$-algebra with orthogonal involution $(A,\s)$ of degree $2m$ the class in $\scg{K}$ given by the reduced norm of an arbitrary element $u\in\mg{A}\cap\Skew{A,\s}$ times $(-1)^m$ yields an invariant of $(A,\s)$, called the \emph{discriminant} (cf.~\cite[(7.1) and (7.2)]{BOI}).
For $a,b\in\mg{K}$ the discriminant of $(a\qil b)_K$ is $a\sq{K}$.
Hence \Cref{P:-qil-characterisation} contains the observation that this is an invariant in the case where $A$ is a $K$-quaternion algebra.
\end{rem}

\begin{prop}\label{P:flipflop}
For $a,b,c,d \in \mg K$ we have 
\begin{align*}
(a\qil b)_K\ox (c\qil d)_K & \simeq  (a\qilr bc)_K\ox (c\qilr ad)_K \quad \textrm{and} \\
(a\qil b)_K\ox (c\qilr d)_K & \simeq  (a\qilr bc)_K\ox (c\qil ad)_K\,.
\end{align*}
\end{prop}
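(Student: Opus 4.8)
The plan is to prove both isomorphisms by exhibiting explicit generators inside the tensor product and checking the relations and involution behaviour. Since $(a,b)_K \otimes (c,d)_K$ splits up to Brauer equivalence in predictable ways — in fact $(a,b)\otimes(c,d) \simeq (a,bc)\otimes(c,ad)$ as plain quaternion algebras via a standard computation — the content here is purely about keeping track of the involutions. So first I would set up notation: let $(Q_1,\s_1) = (a\qil b)_K$ with skew $i$, symmetric $j$, $i^2 = a$, $j^2 = b$, $ij = -ji$; and let $(Q_2,\vt_2)$ be the second factor, with generators $i', j'$ satisfying $i'^2 = c$, $j'^2 = d$, $i'j' = -j'i'$, where $\vt_2(i') = -i'$ always and $\vt_2(j') = +j'$ in the first identity (the $\qil$ case) and $\vt_2(j') = -j'$ in the second (the $\qilr$ case).

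Next I would locate new generators of the four-dimensional-over-$K$ algebras appearing on the right-hand side inside $Q_1 \otimes_K Q_2$. The natural candidates, guided by the classical splitting, are elements such as $i \otimes 1$, $j \otimes i'$, $1 \otimes i'$, $j \otimes j'$ (or small variants thereof): one checks that $(i\otimes 1)$ and $(j \otimes i')$ generate a quaternion subalgebra with invariants $(a, bc)_K$ since $(j\otimes i')^2 = j^2 \otimes i'^2 = bc$ and $(i\otimes 1)$ anticommutes with it, while its centraliser is generated by $(1 \otimes i')$ and $(ij \otimes j')$ or similar, giving invariants $(c, \cdot)_K$ with the second slot working out to $ad$ after using $(ij)^2 = -a b$... the precise choice of the four elements and the scalar bookkeeping is the routine part. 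The key point is then to compute $\s_1 \otimes \vt_2$ on each chosen generator: e.g.\ $i\otimes 1$ is skew, and $j \otimes i'$ is symmetric-times-skew hence skew in the first identity — wait, one must check which generator of the $(a, bc)$ factor ends up symmetric, because the right-hand side lists $(a\qilr bc)_K$, i.e.\ the \emph{canonical} (symplectic) involution, meaning \emph{both} natural generators of that factor must be skew. So I would verify that the two chosen generators of the first RHS factor are both skew under $\s_1 \otimes \vt_2$, and that among the two generators of the second RHS factor, the one squaring to $c$ is skew and the one squaring to $ad$ is symmetric (for $(c\qilr ad)_K$ one needs both skew; for $(c \qil ad)_K$ one needs exactly the $ad$-generator symmetric). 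Matching these sign patterns against the two cases of the statement is what forces the asymmetry between the two displayed identities, and confirming it is the crux.

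The main obstacle I anticipate is precisely this sign bookkeeping: choosing the four generators so that they simultaneously (a) satisfy the correct quaternion relations with the right square values, (b) decompose $Q_1 \otimes Q_2$ as a tensor product of two commuting quaternion subalgebras, and (c) carry the prescribed symmetry/skewness under $\s_1 \otimes \vt_2$. There is genuine freedom in (a)–(b) — one can multiply generators by elements of $K[i]$ or $K[j]$ — and one must exploit that freedom to achieve (c); in the $\qilr$ slots one may need to replace a symmetric generator $g$ by an anticommuting skew element of the same subalgebra (as was done in the proof of \eqref{P:-qil-characterisation}), which exists precisely because the relevant binary form represents the needed value. Once the correct quadruple is fixed, everything reduces to a finite check. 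As a sanity cross-check I would verify consistency with \eqref{P:-qil-characterisation} and with the already-noted identities $(a\qi b)_K \simeq (-ab\qil b)_K$ and $(a\qir b)_K \simeq (b\qil a)_K$, and confirm that types multiply correctly on both sides (both sides have type $(-1)\cdot(+1) = -1$ in the first identity and $(-1)\cdot(-1) = +1$ in the second, consistent with the RHS factors), and that the Brauer classes match: $[(a,bc)] + [(c,ad)] = [(a,b)] + [(c,d)]$ in $\Br(K)$, which holds by bilinearity of the symbol.
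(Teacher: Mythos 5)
Your plan is essentially the paper's proof: the paper takes generators $i,j,f,g$ of the two factors (with $i,f$ skew and $j,g$ symmetric for the first identity), replaces them by $i,\ j'=fj$ and $f,\ g'=ig$, and checks that $j'^2=bc$, $g'^2=ad$, that the two new quaternion subalgebras commute elementwise and are stable under the involution, and that all four new generators are skew (resp.\ that $g'$ becomes symmetric in the second identity, yielding the $\qil$ slot). The only correction to your sketch is the choice of the second generator of the second factor: in your tensor notation it must be $i\otimes j'$ (which squares to $ad$ and centralizes the first factor), not $ij\otimes j'$ or $j\otimes j'$ --- the element $ij\otimes j'$ anticommutes with $i\otimes 1$ and squares to $-abd$ --- but this is exactly the ``small variant'' your plan anticipates, and the rest of your sign bookkeeping (both generators skew for each $\qilr$ slot, the $ad$-generator symmetric for the $\qil$ slot) matches the paper.
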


\begin{proof}
Let $(A,\s)=(a\qil b)_K\ox (c\qil d)_K$.
Then there exist elements $i,j,f,g\in\mg{A}$ such that $\s(i)=-i$, $\s(j)=j$, $\s(f)=-f$, $\s(g)=g$, $i^2=a$, $j^2=b$, $f^2=c$, $g^2=d$, $ij+ji=fg+gf=0$, and each of $i$ and $j$ commutes with each of $f$ and $g$.
Set $j'=fj$ and $g'=ig$. Then
$\s(i)=-i$, $\s(j')=-j'$, $\s(f)=-f$, $\s(g')=-g'$, $i^2=a$, $j'^2=bc$, $f^2=c$, $g'^2=ad$, $ij'+j'i=fg'+g'f=0$, and each of $i$ and $j'$ commutes with each of $f$ and $g'$.
The $K$-subalgebra $Q$ of $A$ generated by $i$ and $j'$ commutes elementwise with the $K$-subalgebra $Q'$ of $A$ generated by $f$ and $g'$, and $Q$ and $Q'$ are $\s$-stable.
Hence 
$$(A,\s)\simeq (Q,\s|_Q)\ox (Q',\s|_{Q'})\simeq (a\qilr bc)_K\ox (c\qilr ad)_K\,.$$
This shows the first isomorphism.
The proof of the second isomorphism is almost identical, with the only difference that $\s(g)=-g$ and $\s(g')=g'$.
\end{proof}

\begin{prop}
For $a,b \in \mg K$, we have
\[\G\,(a\qilr b)_K  = \D_K \lla a,b\rra \quad\text{ and }\quad
\G\,(a\qil b)_K = \D_K\lla a\rra\cup b\D_K\lla a\rra.\]
\end{prop}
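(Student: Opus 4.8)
The plan is to prove the two equalities separately. The first one is immediate. Writing $(a\qilr b)_K = (Q,\can_Q)$ with $Q = (a,b)_K$, one has $\can_Q(x)x = \Nrd_Q(x)$ for every $x \in Q$, and $x \in \mg Q$ precisely when $\Nrd_Q(x) \in \mg K$; hence $\Sim(Q,\can_Q) = \mg Q$ and $\G(a\qilr b)_K = \Nrd_Q(\mg Q)$. As recalled in the proof of \eqref{P:quat-qil-rep}, the quadratic form $x \mapsto \Nrd_Q(x)$ on $Q$ is isometric to $\lla a,b\rra$, so $\Nrd_Q(\mg Q) = \D_K\lla a,b\rra$.

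For the second equality I would set $(Q,\s) = (a\qil b)_K$ and fix generators $i,j$ of $Q = (a,b)_K$ with $i^2 = a$, $j^2 = b$, $ij = -ji$, $\s(i) = -i$ and $\s(j) = j$. Let $L = K[i]$, a quadratic \'etale $K$-subalgebra of $Q$, and denote its canonical involution $\can_L$ by $\gamma \mapsto \ovl\gamma$; thus $\s|_L = \can_L$, the map $\gamma \mapsto \gamma\ovl\gamma$ is the restriction of $\Nrd_Q$ to $L$, and $\D_K\lla a\rra$ is exactly the set of nonzero elements of the form $\gamma\ovl\gamma$ with $\gamma \in L$. Using the decomposition $Q = L \oplus Lj$ together with $j\gamma = \ovl\gamma\, j$ for $\gamma \in L$ and $j^2 = b$, every $x \in Q$ is uniquely of the form $x = \alpha + \beta j$ with $\alpha,\beta \in L$, and a direct computation gives $\s(x) = \ovl\alpha + \beta j$, $\Nrd_Q(x) = \alpha\ovl\alpha - b\beta\ovl\beta$ and $\s(x)x = \alpha\ovl\alpha + b\beta\ovl\beta + 2\ovl\alpha\beta\, j$. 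Since $L \cap Lj = 0$, it follows that $x \in \Sim(Q,\s)$ if and only if $\Nrd_Q(x) \neq 0$ and $\ovl\alpha\beta = 0$, in which case $\s(x)x = \alpha\ovl\alpha + b\beta\ovl\beta$.

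I would then deduce the two inclusions. For $\D_K\lla a\rra \cup b\D_K\lla a\rra \subseteq \G(a\qil b)_K$: given $\gamma \in L$ with $\gamma\ovl\gamma \neq 0$, the element $x = \gamma$ lies in $\Sim(Q,\s)$ with $\s(x)x = \gamma\ovl\gamma$, while $x = \gamma j$ is $\s$-symmetric with $\Nrd_Q(x) = -b\gamma\ovl\gamma \neq 0$, hence lies in $\Sim(Q,\s)$, and satisfies $\s(x)x = (\gamma j)^2 = b\gamma\ovl\gamma$. For the reverse inclusion: take $c = \s(x)x \in \mg K$ with $x = \alpha + \beta j \in \Sim(Q,\s)$, so $c = \alpha\ovl\alpha + b\beta\ovl\beta \neq 0$ and $\ovl\alpha\beta = 0$. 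If $\alpha\ovl\alpha \neq 0$, then $\alpha$ is a unit of $L$, forcing $\beta = 0$ and $c = \alpha\ovl\alpha \in \D_K\lla a\rra$; otherwise $c = b\beta\ovl\beta$ with $\beta\ovl\beta \neq 0$, so $c \in b\D_K\lla a\rra$.

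The underlying computations are entirely routine, so I expect the main (minor) obstacle to be just the book-keeping of signs in $\Nrd_Q(x)$ and $\s(x)x$ for $x = \alpha + \beta j$. It is worth stressing that this argument requires no case distinction according to whether $Q$ is split (equivalently, whether $a \in \sq K$): in the reverse inclusion one uses only that $\alpha\ovl\alpha \neq 0$ makes $\alpha$ invertible in $L$, which holds regardless of whether $L$ is a field, and the opposite inclusion was obtained unconditionally as well.
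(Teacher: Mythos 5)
Your proof is correct and follows the same route as the paper's: identify $\Sim(Q,\can_Q)=\mg{Q}$ so that $\G(a\qilr b)_K=\Nrd_Q(\mg{Q})=\D_K\lla a,b\rra$, and compute $\Sim(Q,\s)$ for the orthogonal involution as the units of $K[i]\cup K[i]j$. The paper merely asserts these two facts in two lines; you supply the explicit decomposition $Q=L\oplus Lj$ and the verification (including the split case of $L$), which is a welcome elaboration but not a different argument.
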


\begin{proof}
Let $Q=(a,b)_K$ and $u,v\in \mg{Q}$ with $u^2=a$, $v^2=b$ and $uv+vu=0$. Then $\Sim(Q,\can_Q)=\mg{Q}$ and thus $\G(Q,\can_Q)=\D_K\lla a,b\rra$.
For $\tau=\Int(u)\circ\can_Q$ we obtain $\Sim(Q,\tau)=\mg{K(u)}\cup v\mg{K(u)}$ and thus $\G(Q,\tau)=\D_K\lla a\rra\cup b\D_K\lla a\rra$.
\end{proof}

%%%%%%%%%%%%%%%%%%%%%%%%%%%%%%%%%%%%%
%\section{Involution trace forms} %%%
%%%%%%%%%%%%%%%%%%%%%%%%%%%%%%%%%%%%%
%\label{sec5} 

A $K$-algebra with involution $(A,\s)$ with centre $L$ gives rise to a regular hermitian form $T_{(A,\s)}:A\x A\too L$ over  $(L,\can_{L/K})$ defined by $T_{(A,\s)}(x,y)=\Trd_A(\s(x)y)$; this follows from \cite[(2.2) and (2.16)]{BOI}.
We further obtain a regular symmetric $K$-bilinear form $T_\s:A\x A \too K$ defined by $T_\s(x,y)=\half \Trd_A (\s(x)y+\s(y)x)$.
 Note that if $L=K$ then $T_\s=T_{(A,\s)}$, otherwise $2T_\s=T\circ T_{(A,\s)}$ where $T$ is the trace of ${L/K}$.

Given a $K$-algebra with involution $\Psi=(A,\s)$, we denote by $\itr(\Psi)$ the quadratic form $(A,T_\s)$ over $K$.
Note that $\dim(\itr(\Psi))=\dim_K(A)$. If $\Psi$ is a $K$-algebra with canonical involution, then $\itr(\Psi)$ is the norm form of the $K$-algebra $\unl{\Psi}$.

Note that for a quadratic form $\vf$ the notation $2\vf$ refers to $\la 2\ra\ox\vf$ (the form obtained by scaling $\vf$ by $2$), which needs to be distinguished from $2\times\vf=\vf\perp \vf$.

\begin{ex}\label{E:itr}
For $a\in\mg{K}$ we have $\itr\,(a)_K=\lla a\rra$. For $a,b\in\mg{K}$ we have $\itr\,(a\qil b)_K=2\lla a,-b\rra$ and $\itr\,(a\qilr b)_K=2\lla a,b\rra$. 
\end{ex}

\begin{prop}\label{P:itr-prod}
Let $\Psi$ and $\Theta$ be $K$-algebras with involution.
If  $\Psi$  is of the first kind, then
$\itr(\Psi\ox\Theta)\simeq  \itr(\Psi)\ox\itr(\Theta)$.
If $\Psi$ and $\Theta$ are both unitary with same centre, then
$2\times\itr(\Psi\ox\Theta)\simeq  \itr(\Psi)\ox\itr(\Theta)$. 
\end{prop}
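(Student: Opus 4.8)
The plan is to compute the involution trace form of a tensor product by relating the reduced trace of the tensor product algebra to the reduced traces of the factors, and then to track the effect of the involution $\sigma\otimes\vartheta$.

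First I would recall the basic compatibility of reduced traces with tensor products: for finite-dimensional central simple algebras (or algebras with involution of the relevant kinds) one has $\Trd_{A\otimes_F B}(x\otimes y)=\Trd_A(x)\cdot\Trd_B(y)$, where $F$ is the appropriate base ($F=K$ when $\Psi$ is of the first kind, and $F=L=Z(\Psi)=Z(\Theta)$ in the unitary case with common centre). This is where the hypothesis on the kind of $\Psi$ enters: when $\Psi$ is of the first kind, $\Psi\otimes\Theta$ is taken over $K$ and its centre is $Z(\Theta)$, so the hermitian/bilinear structure of $\itr(\Psi\otimes\Theta)$ lives over the same base as that of $\itr(\Theta)$; when both are unitary over $L$, the tensor product is taken over $L$ but $\itr$ is still a $K$-bilinear form, and the factor of $2$ arises from the trace $Z(A)/K$ as in the formula $2T_\s=T\circ T_{(A,\s)}$ quoted just before the statement.

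Next I would unwind the definition $T_{\s\otimes\vartheta}(x\otimes x',\,y\otimes y')=\half\Trd((\s\otimes\vartheta)(x\otimes x')\cdot(y\otimes y')+\cdots)$ on simple tensors, use $(\s\otimes\vartheta)(x\otimes x')=\s(x)\otimes\vartheta(x')$ and the multiplicativity of $\Trd$ to factor this as a product of a $T_\s$-term and a $T_\vartheta$-term. The only subtlety is the symmetrization: $\half(\Trd_A(\s(x)y)\Trd_B(\vartheta(x')y')+\Trd_A(\s(y)x)\Trd_B(\vartheta(y')x'))$ is not literally $T_\s(x,y)\cdot T_\vartheta(x',y')$ on the nose, but one checks that the hermitian trace forms $T_{(A,\s)}$ and $T_{(B,\vartheta)}$ (over $(Z(A),\can)$ and $(Z(B),\can)$) \emph{are} exactly multiplicative, $T_{(A\otimes B,\s\otimes\vartheta)}=T_{(A,\s)}\otimes T_{(B,\vartheta)}$, because there the symmetrization is built into the hermitian structure; one then passes back to the symmetric bilinear forms via $T_\s=T_{(A,\s)}$ (first kind) or $2T_\s=T\circ T_{(A,\s)}$ (second kind) and transfers along the trace $L/K$. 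In the first-kind case this gives $\itr(\Psi\otimes\Theta)=T_{(A,\s)}\otimes T_{(B,\vartheta)}$-transferred, which equals $\itr(\Psi)\otimes\itr(\Theta)$ directly; in the unitary case one gets an extra scalar extension/transfer that produces the $2\times$ on the left.

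The main obstacle I anticipate is bookkeeping the base-field issue cleanly: in the unitary case $\itr(\Psi)$ and $\itr(\Theta)$ are $K$-forms of dimension $2n^2$ each, their tensor product has dimension $4n^4$, while $\itr(\Psi\otimes\Theta)$ has dimension $2n^4$, so the factor $2\times$ on the left is forced by dimension count and must be produced by correctly handling $2T_\s=T\circ T_{(A,\s)}$ together with the fact that $T_{L/K}\circ(\text{hermitian form over }L)$ doubles dimension. I would make this precise by first proving the clean hermitian identity $T_{(A\otimes B,\s\otimes\vartheta)}\simeq T_{(A,\s)}\otimes_L T_{(B,\vartheta)}$ over $(L,\can_L)$ in the unitary case, then applying the transfer $T_{L/K}$ and the identity $2T_\s=T_{L/K}\circ T_{(A,\s)}$ on each side, observing that $T_{L/K}$ applied to a tensor product over $L$ relates to $T_{L/K}$ of the factors with a compensating $2$. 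The first-kind case is then the easy case where $L=K$, $T_\s=T_{(A,\s)}$, and the hermitian multiplicativity is the whole statement. It may also be worth invoking \cite[(2.2) and (2.16)]{BOI} (as the excerpt already does to establish regularity of $T_{(A,\s)}$) to justify that these trace forms behave well under scalar extension, so that it suffices to verify the isometry after extending scalars to a splitting field, where everything reduces to the matrix-algebra computation $\itr(\M_n(K),\tau)=n\times\lla\,\ra$-type formulas and \eqref{P:ad-mult}.
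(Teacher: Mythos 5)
Your proposal is correct and follows essentially the same route as the paper: multiplicativity of the reduced trace on simple tensors (verified by reduction to the split case), the resulting on-the-nose multiplicativity of the hermitian trace forms $T_{(A,\sigma)}$ over $(L,\can_L)$, and then descent to the symmetric $K$-bilinear forms via the transfer, where the factor $2$ comes from $\lla c,c\rra\simeq 2\times\lla c\rra$. The paper makes your transfer step concrete by writing $\itr(\Theta)\simeq\lla c\rra\otimes\vartheta$ when $Z(\Theta)\simeq K(\sqrt{c})$, which also handles the mixed case where $\Psi$ is of the first kind but $\Theta$ is unitary.
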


\begin{proof}
Write $\Psi=(A,\s)$ and $\Theta=(B,\tau)$. 
Let $K'=\Z(A)$ and $L=\Z(B)$.
In view of the claims we may assume that $K'\subseteq L$.
For $a\in A$ and $b\in B$ we have $\Trd_{A\ox_{K'}B}(a\ox b)=\Trd_A(a)\cdot \Trd_B(b)$, as one verifies by reduction to the split case.
Hence, 
 $T_{\Psi\ox \Theta}$ and $T_{\Psi}\ox T_{\Theta}$ coincide as hermitian forms on  $A\ox_{K'} B$ with respect to $(L,\can_{L/K})$.
If $L=K$ then we are done.
Assume now that $(L,\can_{L/K})\simeq (c)_K$  where $c\in\mg{K}$.
Then $\itr(\Theta)\simeq \lla c\rra\ox\vt$ for a form $\vt$ over $K$.
If now $K'=K$ then $ \itr(\Psi)\ox\itr(\Theta)\simeq \lla c\rra\ox(\itr(\Psi)\ox \vt)\simeq \itr(\Psi\ox \Theta)$.
In the remaining case $K'=L$ and $\itr(\Psi)\simeq \lla c\rra\ox\psi$ for a quadratic form $\psi$ over $K$, and we obtain that
$\itr(\Psi)\ox\itr(\Theta)\simeq \lla c,c\rra\ox \psi\ox\vt\simeq 2\times \lla c\rra\ox(\psi\ox\vt)\simeq 2\times
\itr(\Psi\ox \Theta)$.
\end{proof}

\begin{prop}
\label{P:trace-qf-square}
For any  form $\vf$ over $K$ we have $\itr(\Ad(\vf))\simeq \vf\ox\vf$.
\end{prop}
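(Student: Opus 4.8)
The plan is to compute the involution trace form of $\Ad(\vf) = (\End_K(V), \ad_B)$ explicitly in a well-chosen basis. Write $\vf = \la a_1, \dots, a_n\ra$, so that $V$ has an orthogonal basis $e_1, \dots, e_n$ with $B(e_i, e_i) = a_i$ and $B(e_i, e_j) = 0$ for $i \neq j$. Then the matrix units $E_{ij} \in \End_K(V)$ (defined by $E_{ij}(e_k) = \delta_{jk} e_i$) form a $K$-basis of $\End_K(V)$, and the first step is to work out how $\ad_B$ acts on them. From the defining formula $B(f(u), v) = B(u, \ad_B(f)(v))$ one finds $\ad_B(E_{ij}) = \frac{a_i}{a_j} E_{ji}$, since $B(E_{ij}(e_k), e_\ell) = \delta_{jk} a_i \delta_{i\ell}$ must equal $B(e_k, \ad_B(E_{ij})(e_\ell))$.

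Next I would compute the Gram matrix of $T_{\ad_B}(x,y) = \half \Trd(\ad_B(x) y + \ad_B(y) x)$ on this basis, using $\Trd_{\End_K(V)} = \tr$ and $\tr(E_{ij} E_{k\ell}) = \delta_{jk}\delta_{i\ell}$. One gets $T_{\ad_B}(E_{ij}, E_{k\ell}) = \half\bigl(\frac{a_i}{a_j}\tr(E_{ji}E_{k\ell}) + \frac{a_k}{a_\ell}\tr(E_{\ell k}E_{ij})\bigr)$, which is nonzero only when $\{(i,j)\} $ pairs with $(\ell,k) = (i,j)$ in the appropriate way, i.e. when $k=j$ and $\ell = i$. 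On the diagonal blocks: $T_{\ad_B}(E_{ii}, E_{ii}) = 1$, while for $i \neq j$ the pair $E_{ij}, E_{ji}$ spans a hyperbolic-type plane with $T_{\ad_B}(E_{ij}, E_{ij}) = 0$, $T_{\ad_B}(E_{ji}, E_{ji}) = 0$, and $T_{\ad_B}(E_{ij}, E_{ji}) = \half(\frac{a_i}{a_j} + \frac{a_j}{a_i})$. Rescaling $E_{ij}$ by a suitable factor, the plane $\la E_{ij}, E_{ji}\ra$ becomes the metabolic/hyperbolic plane $\la a_i a_j, -a_i a_j \ra$ — more precisely, a short computation diagonalizes it as $\la a_i a_j, -a_i a_j\ra$ (its discriminant is $-(a_i a_j)^2$ up to squares, hence $\simeq \la 1, -1\ra$ times a square, but tracking the actual entry gives $\la a_i a_j\ra \perp \la -a_i a_j\ra$). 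Summing over $i < j$ and the diagonal gives $\itr(\Ad(\vf)) \simeq \la a_1^2, \dots, a_n^2\ra \perp \perp_{i<j} \la a_i a_j, -a_i a_j\ra \simeq \la a_1, \dots, a_n\ra \ox \la a_1, \dots, a_n\ra = \vf \ox \vf$, the last isometry because $\vf\ox\vf = \perp_{i,j}\la a_i a_j\ra$ and the off-diagonal terms pair up as $\la a_i a_j, a_j a_i\ra$ while here we have $\la a_i a_j, -a_i a_j\ra$; both are isometric to $\la a_ia_j\ra\perp\la -a_ia_j\ra$... so I must be slightly careful and instead argue that the diagonal part $\la a_1^2,\dots,a_n^2\ra \simeq n\times\la 1\ra$ together with the hyperbolic planes matches $\vf\ox\vf$ after a Witt-cancellation-level comparison of dimension and the fact that both have the same diagonalization up to the square factors.

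A cleaner route for the comparison step, which I would actually prefer, is to invoke multiplicativity: by \eqref{P:ad-mult} we have $\Ad(\vf\ox\psi)\simeq \Ad(\vf)\ox\Ad(\psi)$, and by \eqref{P:itr-prod} (first-kind case, since adjoint involutions of quadratic forms are orthogonal, hence of the first kind) $\itr(\Psi\ox\Theta)\simeq\itr(\Psi)\ox\itr(\Theta)$. Thus it suffices to prove $\itr(\Ad(\vf))\simeq\vf\ox\vf$ when $\vf = \la a\ra$ is one-dimensional, i.e. for $\Ad(\la a\ra)\simeq(K,\id_K)$, where indeed $\itr = \la 1\ra \simeq \la a\ra\ox\la a\ra$; and then for $\vf = \la a_1\ra \perp \dots \perp \la a_n\ra$ we would be reduced to the case $\vf = \la a,b\ra$, a $2\times 2$ matrix algebra computation. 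Actually even this reduction needs the $n=2$ base case done by hand since $\la a_1,\dots,a_n\ra$ is not a tensor product of one-dimensional forms — so the genuine content is the $2$-dimensional (i.e. $\M_2(K)$) computation, which is short.

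The main obstacle is purely bookkeeping: getting the action of $\ad_B$ on the matrix units right (the factors $a_i/a_j$), and then correctly identifying the rescaled off-diagonal planes as $\la a_ia_j, -a_ia_j\ra$ rather than $\la a_ia_j, a_ia_j\ra$ — these are non-isometric in general, so the sign matters, and it is exactly this sign that makes the off-diagonal contributions metabolic and forces the answer to be $\vf\ox\vf$ (whose off-diagonal part $\la a_ia_j\ra\perp\la a_ja_i\ra = \la a_ia_j,a_ia_j\ra$ is, wait) — so in fact I expect the correct statement is that $\itr(\Ad(\vf))\simeq\vf\ox\vf$ holds with the diagonal part contributing $\la a_i^2\ra \simeq \la 1\ra$ and the off-diagonal pairs contributing hyperbolic planes, and $\vf\ox\vf$ likewise Witt-simplifies to $n\times\la 1\ra$ when all cross terms cancel in $WK$ — but since the claim is an isometry, not a Witt equivalence, the honest proof is the direct Gram matrix computation above, and the place to be careful is precisely the arithmetic of the $2$-dimensional planes. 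I would therefore present the direct computation, organized by diagonal versus off-diagonal matrix units, as the proof.
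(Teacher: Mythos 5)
Your overall strategy (diagonalize $\vf$, compute the Gram matrix of $T_{\ad_B}$ on the matrix units $E_{ij}$) is sound and, carried out correctly, gives a clean self-contained proof; the paper itself only cites \cite[(11.4)]{BOI}. But your Gram matrix is wrong, and the error is fatal: as written, your computation contradicts the proposition rather than proving it. Your formula $\ad_B(E_{ij})=\frac{a_i}{a_j}E_{ji}$ is correct, but then
\[
T_{\ad_B}(E_{ij},E_{k\ell})=\half\Bigl(\tfrac{a_i}{a_j}\tr(E_{ji}E_{k\ell})+\tfrac{a_k}{a_\ell}\tr(E_{\ell k}E_{ij})\Bigr)
=\half\Bigl(\tfrac{a_i}{a_j}+\tfrac{a_k}{a_\ell}\Bigr)\delta_{ik}\delta_{j\ell},
\]
because $\tr(E_{ji}E_{k\ell})=\delta_{ik}\delta_{j\ell}$ (not $\delta_{jk}\delta_{i\ell}$, which is where your indices slipped). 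So the pairing is nonzero precisely when $(k,\ell)=(i,j)$: the basis $\{E_{ij}\}$ is already \emph{orthogonal} for $T_{\ad_B}$, with $T_{\ad_B}(E_{ij},E_{ij})=a_i/a_j\equiv a_ia_j\bmod\sq{K}$ and $T_{\ad_B}(E_{ij},E_{ji})=0$ for $i\neq j$. Hence $\itr(\Ad(\vf))\simeq\perp_{i,j}\la a_ia_j\ra=\vf\ox\vf$ on the nose, with no hyperbolic planes and no ``careful arithmetic of the $2$-dimensional planes'' needed. Your version, which pairs $E_{ij}$ with $E_{ji}$ into planes $\la a_ia_j,-a_ia_j\ra$, would yield $n\x\la 1\ra$ plus $\binom{n}{2}$ hyperbolic planes; for $\vf=\la 1,a\ra$ over $\qq$ with $a=2$ that is isotropic while $\vf\ox\vf=\la 1,1,2,2\ra$ is anisotropic. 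You actually noticed this discrepancy yourself (the off-diagonal part of $\vf\ox\vf$ is $\la a_ia_j,a_ia_j\ra$, not $\la a_ia_j,-a_ia_j\ra$) and then trailed off without resolving it; the resolution is that your Gram matrix, not the statement, is at fault.

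Your proposed ``cleaner route'' via multiplicativity also does not close the gap: \eqref{P:ad-mult} and \eqref{P:itr-prod} reduce the claim for \emph{tensor products} of forms to the claim for the factors, but a diagonal form $\la a_1,\dots,a_n\ra$ is an orthogonal sum, not a tensor product, of one-dimensional forms, and $\Ad$ does not convert orthogonal sums into anything these lemmas handle. So there is no induction from the $2\x 2$ case to general $n$; you must do the full $n\x n$ computation, which fortunately is exactly the (corrected) diagonal computation above.
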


\begin{proof}
 See \cite[(11.4)]{BOI}.
\end{proof}

Let $A$ be a finite-dimensional $K$-algebra.
For $a\in A$ let $\lambda_a\in \End_K(A)$ be given by $\lambda_a(x)=ax$
for $x\in A$. The $K$-algebra homomorphism $\lambda:A\too \End_K(A)$, $a \mapstoo \lambda_a$ thus obtained  is called the
\emph{left regular representation of $A$}.

\begin{prop}\label{P:awi-embed} 
Let $\Psi$ be a $K$-algebra with involution. The left regular representation of  $\underline{\Psi}$ yields an embedding of $\Psi$  into  $\Ad(\itr(\Psi))$.
\end{prop}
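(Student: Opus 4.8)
The plan is to verify that the left regular representation $\lambda\colon A\too\End_K(A)$ is a homomorphism of $K$-algebras with involution from $\Psi=(A,\s)$ to $\Ad(\itr(\Psi))=(\End_K(A),\ad_{T_\s})$, and then to observe that it is automatically an embedding. First I would recall that $\lambda$ is an injective homomorphism of $K$-algebras (injectivity is clear since $\lambda_a(1)=a$, and it is a ring homomorphism by associativity of multiplication in $A$). So the only thing to check is compatibility with the involutions, i.e.\ that $\ad_{T_\s}(\lambda_a)=\lambda_{\s(a)}$ for every $a\in A$. By the defining property of the adjoint involution this is equivalent to the identity
\[
T_\s(\lambda_a(x),y)\;=\;T_\s\bigl(x,\lambda_{\s(a)}(y)\bigr)\qquad\text{for all }x,y\in A,
\]
that is, $T_\s(ax,y)=T_\s(x,\s(a)y)$.

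The key step is therefore this trace identity, which I would unwind from the definition $T_\s(u,v)=\half\Trd_A(\s(u)v+\s(v)u)$. Expanding the left-hand side gives $\half\Trd_A\bigl(\s(ax)y+\s(y)ax\bigr)=\half\Trd_A\bigl(\s(x)\s(a)y+\s(y)ax\bigr)$, while the right-hand side is $\half\Trd_A\bigl(\s(x)\s(a)y+\s(\s(a)y)x\bigr)=\half\Trd_A\bigl(\s(x)\s(a)y+\s(y)ax\bigr)$, using $\s(\s(a)y)=\s(y)\s(\s(a))=\s(y)a$. So the two sides are literally equal term by term, and no appeal to the trace property $\Trd_A(uv)=\Trd_A(vu)$ is even needed here; it is just the anti-automorphism property of $\s$ and $K$-linearity of $\Trd_A$. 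This shows $\lambda$ respects the involutions.

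Finally, since $T_\s$ is regular (stated in Section~\ref{sec5}), $\Ad(\itr(\Psi))$ is a genuine split orthogonal $K$-algebra with involution, and an injective involution-compatible $K$-algebra homomorphism between $K$-algebras with involution is by definition an embedding in the sense of Section~\ref{sec3}. Hence $\lambda$ embeds $\Psi$ into $\Ad(\itr(\Psi))$, as claimed. I do not expect any real obstacle: the content is entirely the one-line trace computation above, and the main point worth stating carefully is simply that one must check $\ad_{T_\s}\circ\lambda=\lambda\circ\s$ rather than merely that $\lambda$ is an algebra embedding.
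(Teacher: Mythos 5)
Your proposal is correct and follows the paper's own argument: the paper likewise reduces everything to the adjointness identity $T_\s(\lambda_a(x),y)=T_\s(x,\lambda_{\s(a)}(y))$, which you verify explicitly (and correctly — the two expansions do coincide term by term using only the anti-automorphism property of $\s$ and $\s\circ\s=\id$). The only difference is that you spell out the one-line computation the paper leaves implicit.
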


\begin{proof}  Write $\Psi=(A,\s)$. For $a, x, y\in A$ we have 
$T_\s(x, \lambda_{\s(a)}(y))=T_\s(\lambda_a(x),y)$. 
Thus $\lambda$ identifies $\s$ with the restriction to $\lambda(A)$ of the involution adjoint to $T_\s$.
\end{proof}

\begin{prop}
\label{P:awi-square}
Let $\Psi$ be a $K$-algebra with involution of the first kind.
Then
 $$\Psi\ox\Psi \simeq \Ad(\itr(\Psi))\,.$$
\end{prop}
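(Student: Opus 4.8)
The plan is to identify $\Psi\ox\Psi$ with $\Ad(\itr(\Psi))$ by combining the embedding of Proposition~\ref{P:awi-embed} with a dimension count. First I would recall that $\itr(\Psi)$ has dimension $\dim_K(A)=n^2$ where $n=\deg(\Psi)$, so $\Ad(\itr(\Psi))=(\End_K(A),\ad_{T_\s})$ is a split central simple $K$-algebra of degree $n^2$; on the other hand, since $\Psi$ is of the first kind, $\type(\Psi\ox\Psi)=\type(\Psi)^2=1$, so $\Psi\ox\Psi$ is orthogonal, and $[\Psi\ox\Psi]=2[\Psi]=0$ in $\Br(K)$ because $[A]$ has order at most $2$ for involutions of the first kind; hence $\Psi\ox\Psi$ is a split orthogonal $K$-algebra with involution of degree $n^2$ as well. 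By Proposition~\ref{P:awi-embed} applied to $\Psi\ox\Psi$ (which is again a $K$-algebra with involution), the left regular representation gives an embedding $\Psi\ox\Psi\hookrightarrow\Ad(\itr(\Psi\ox\Psi))$, but that ambient algebra has the wrong degree; the better route is to build the embedding $\Psi\ox\Psi\hookrightarrow\Ad(\itr(\Psi))$ directly.

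The key step is to produce this embedding. I would use the left regular representation $\lambda:A\too\End_K(A)$ together with the \emph{right} regular representation: for $a\in A$ let $\rho_a(x)=x\s(a)$, and note $\rho:A\too\End_K(A)$ is a $K$-algebra homomorphism because $\rho_a\rho_b(x)=\rho_b(x)\s(a)=x\s(b)\s(a)=x\s(ab)=\rho_{ab}(x)$. Since left and right multiplications commute, $\lambda$ and $\rho$ have commuting images, so they assemble into a $K$-algebra homomorphism $\mu:A\ox_K A\too\End_K(A)$, $a\ox b\mapsto\lambda_a\rho_b$. As $A\ox_K A$ is central simple (again using that $A$ is central simple of the first kind), $\mu$ is injective, hence bijective by the dimension count $\dim_K(A\ox_K A)=n^4=\dim_K(\End_K(A))$. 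It remains to check that $\mu$ is compatible with the involutions, i.e. that $\mu$ carries $\s\ox\s$ to $\ad_{T_\s}$: for $a,x,y\in A$ one computes $T_\s(\lambda_a(x),y)=T_\s(x,\lambda_{\s(a)}(y))$ as in Proposition~\ref{P:awi-embed}, and similarly $T_\s(\rho_b(x),y)=T_\s(x,\rho_{\s(b)}(y))$ using symmetry of $T_\s$ and the trace identity $\Trd_A(\s(u)v)=\Trd_A(\s(v)u)$; combining these gives $T_\s(\mu(a\ox b)(x),y)=T_\s(x,\mu(\s(a)\ox\s(b))(y))$, which says exactly that $\ad_{T_\s}\circ\mu=\mu\circ(\s\ox\s)$.

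I would then conclude that $\mu$ is an isomorphism of $K$-algebras with involution $\Psi\ox\Psi\xrightarrow{\ \sim\ }\Ad(\itr(\Psi))$.

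The main obstacle is the verification that $\mu$ intertwines the involutions, and in particular getting the adjunction identity for the right regular representation right: one must be careful that $\rho_a(x)=x\s(a)$ (not $xa$) is what makes $\rho$ a homomorphism and simultaneously $T_\s$-adjoint, and the computation uses both the symmetry $T_\s(x,y)=T_\s(y,x)$ and the cyclicity of the reduced trace. Everything else — the Brauer-triviality, the type computation, the dimension counts forcing bijectivity — is routine given the results already established, notably Proposition~\ref{P:awi-embed} and Proposition~\ref{P:trace-qf-square} (the latter showing the split case $\Psi=\Ad(\vf)$ is consistent, since then $\Psi\ox\Psi\simeq\Ad(\vf\ox\vf)\simeq\Ad(\itr(\Ad(\vf)))$ by Proposition~\ref{P:ad-mult}).
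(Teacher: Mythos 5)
Your proof is correct and is essentially the paper's argument: your map $\mu(a\ox b)=\lambda_a\rho_b$, i.e.\ $x\mapsto ax\s(b)$, is exactly the homomorphism $\s_*$ used in the paper, with the same simplicity-plus-dimension-count argument for bijectivity and the same adjunction identity $T_\s(\mu(a\ox b)(x),y)=T_\s(x,\mu(\s(a)\ox\s(b))(y))$ to match the involutions. The only difference is that you spell out the verification that the paper leaves to the reader.
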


\begin{proof} We expand the proof of \cite[(11.1)]{BOI}. Let $\Psi=(A,\s)$. 
Consider the $K$-algebra homomorphism 
$\s_*: A\ox_K A\too \End_K(A)$ determined by $\s_*(a\ox b)(x)=ax\s(b)$ for all $a,b,x \in A$. 
As $A\ox_K A$ is simple and of the same dimension as $\End_K(A)$,  $\s_*$ is an isomorphism.  For $a,b,x,y\in A$ we have 
$$T_\s(x, \s_*(\s(a)\ox \s(b))(y))=
T_\s (\s_*(a\ox b)(x), y)\,.$$ Thus $\s_*$ identifies 
the involution $\s\ox\s$  with the adjoint involution of $T_\s$.
\end{proof}

%%%%%%%%%%%%%%%%%%%%%%%%%%%%%%%%%%%%%%%%%%%%%%%%%%
\section{Hyperbolicity} %%%
%%%%%%%%%%%%%%%%%%%%%%%%%%%%%%%%%%%%%%%%%%%%%%%%%%
\label{sec6}

Following \cite[(2.1)]{BST}, we say that the $K$-algebra with involution $(A,\s)$ is \emph{hyperbolic} if there exists an element $e\in A$ with $e^2=e$ and $\s(e)=1-e$.
If $(A,\s)$ is adjoint to a hermitian form over a $K$-division algebra with involution, then it is hyperbolic if and only if the hermitian form is hyperbolic.

\begin{prop}\label{P:hyper-char}
The $K$-algebra with involution $(A,\s)$ is hyperbolic if and only if there exists $f\in\Skew{A,\s}$ with $f^2=1$, if and only if $(1)_K$ embeds into $(A,\s)$.
\end{prop}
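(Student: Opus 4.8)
The plan is to prove the three-way equivalence by showing that hyperbolicity gives a skew element squaring to $1$, that such an element provides an embedding of $(1)_K$, and finally that an embedding of $(1)_K$ recovers an idempotent of the required kind.

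First I would go from an idempotent $e$ with $e^2=e$ and $\s(e)=1-e$ to a skew unit of order $2$. The natural candidate is $f=2e-1$: it is symmetric or skew according to $\s(e)$, and here $\s(f)=2\s(e)-1=2(1-e)-1=1-2e=-f$, so $f\in\Skew{A,\s}$; moreover $f^2=4e^2-4e+1=1$ since $e^2=e$. Conversely, given $f\in\Skew{A,\s}$ with $f^2=1$, I would set $e=\half(1+f)$ and check $e^2=\half(1+f)$ (using $f^2=1$) equals $e$, while $\s(e)=\half(1+\s(f))=\half(1-f)=1-e$. So the first and second conditions are equivalent via the mutually inverse substitutions $e\mapsto 2e-1$ and $f\mapsto\half(1+f)$.

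Next I would connect the skew unit $f$ with $f^2=1$ to an embedding of $(1)_K=(K[X]/(X^2-1),\can)$. Recall $(1)_K$ is the degenerate unitary $K$-algebra $K\times K$ with the swap involution (since $1\in\sq K$, as noted after the definition of $(a)_K$); equivalently it is generated over $K$ by a single element $X$ with $X^2=1$ and $\can(X)=-X$... actually $\can_L$ fixes $K$ and sends $X$ to $-X$ when $L=K[X]/(X^2-1)$, so $(1)_K$ is precisely the $K$-algebra with involution freely generated by one skew element of square $1$. Thus a homomorphism of algebras with involution $(1)_K\to(A,\s)$ is exactly the choice of an element $f\in\Skew{A,\s}$ with $f^2=1$, and such a homomorphism is automatically an embedding because $K[X]/(X^2-1)\to A$, being a nonzero $K$-algebra map whose source has $K$-basis $\{1,X\}$ with $1\mapsto 1$, $X\mapsto f\notin K$ (as $f$ is skew and nonzero), is injective. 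One should note $f\ne 0$ since $f^2=1$, and $f\notin K$ since $\Sym{A,\s}\cap\Skew{A,\s}=0$ in characteristic different from $2$, so $f\in K$ would force $f=0$.

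The main obstacle, and the step requiring the most care, is the bookkeeping around $(1)_K$ being \emph{degenerate}: it is not simple, so "embedding" must be interpreted via the definition of homomorphism of algebras with involution given in Section~\ref{sec3}, and one must make sure that the image $K[f]$ inside $A$ is the homomorphic image of $K[X]/(X^2-1)$ respecting involutions even when $A$ itself is simple (i.e.\ the embedding need not have $\s$-stable complement, which is fine—only injectivity and compatibility with involutions is required). I would spell out that a $K$-algebra homomorphism $f:K[X]/(X^2-1)\to A$ with $\s\circ f=f\circ\can$ is the same datum as the image $f(X)$, a skew element of square $1$, and that injectivity is automatic as above; conversely any skew $f$ with $f^2=1$ defines such a homomorphism. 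Combining the three equivalences—$e$ exists $\iff$ skew unit of square $1$ exists $\iff$ $(1)_K$ embeds—completes the proof.
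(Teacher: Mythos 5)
Your proof is correct and takes essentially the same approach as the paper: the mutually inverse substitutions $f=2e-1$ and $e=\frac{1}{2}(1+f)$ for the first equivalence, and the observation that an embedding of $(1)_K$ is exactly the datum of a skew element of square $1$ for the second (which the paper dismisses as obvious). Your extra care with injectivity and with the degenerate nature of $(1)_K$ is sound, and your inverse formula $e=\frac{1}{2}(1+f)$ is in fact the correct one.
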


\begin{proof}
The second equivalence is obvious.
To prove the first equivalence, given  $e\in A$ with $e^2=e$ and $\s(e)=1-e$, we see that $f=2e-1$ satisfies $\s(f)=-f$ and $f^2=1$, and conversely, for $f\in A$ with these properties, the element $e=\frac{1}{2}(f-1)$ satisfies $e^2=1$ and $\s(e)=1-e$.
\end{proof}

\begin{cor}\label{C:trivial-hyper}
Let $\Psi$ be a $K$-algebra with involution which is either split symplectic or unitary of inner type.
Then $\Psi$ is hyperbolic. 
\end{cor}

\begin{proof}
Using \Cref{P:JAC} we have that $\Psi\simeq \Ad(\vf)\ox \Phi$ for a $K$-algebra with canonical involution $\Phi$, and conclude that $\Phi\simeq (1\qilr 1)_K$ or $\Phi\simeq (1)_K$.
In either case $\Psi$ contains $(1)_K$ and thus is hyperbolic by \Cref{P:hyper-char}.
\end{proof}

\begin{prop}
Let $\Psi$ and $\Theta$ be $K$-algebras with involution.
\label{C:hyper-trace}
\quad
\begin{enumerate}[$(a)$]
\item If $\Psi$ and $\Theta$ are hyperbolic,  $\type(\Phi)=\type(\Psi)$ and $\underline{\Psi}\simeq\underline{\Theta}$,
then $\Psi\simeq \Theta$. 
\item If $\Psi$ is hyperbolic, then $\Psi\ox\Theta$ is hyperbolic. 
\item If $\Psi$ is hyperbolic, then $\itr(\Psi)$ is hyperbolic. 
\end{enumerate}
\end{prop}

\begin{proof}
$(a)$ If $\Psi$ and $\Theta$ are  
unitary of inner type, the statement follows from \cite[(2.14)]{BOI}. 
Otherwise $\Psi$ and $\Theta$ are adjoint to hyperbolic hermitian or skew-hermitian forms of the same dimension over a common $K$-division algebra with involution, and these are necessarily isometric.

$(b)$ This is obvious.

$(c)$ By \Cref{P:awi-embed},  $\Psi$ embeds into $\Ad(\itr(\Psi))$. This yields the statement.
\end{proof}

\begin{prop}
\label{P:simhyp}
Let $\Psi$ be a $K$-algebra with involution and $a\in \mg{K}$. We have that  $a \in \G(\Psi)$ if and only if $\Ad\lla a\rra \ox \Psi$ is hyperbolic.
\end{prop}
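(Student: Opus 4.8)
The plan is to unwind both sides of the claimed equivalence into statements about hermitian or skew-hermitian forms, using the tools already assembled in the excerpt, and then to reduce to a one-line computation with trace forms.

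First I would observe that by \eqref{P:trace-qf-square} we have $\itr(\Ad\lla a\rra)\simeq\lla a\rra\ox\lla a\rra\simeq 2\times\lla 1,-a\ra$, which is hyperbolic; more to the point, $\Ad\lla a\rra=(\End_K(V),\ad_B)$ with $(V,B)=\lla a\rra=\la 1,-a\ra$, a $2$-dimensional space. Thus $\Ad\lla a\rra\ox\Psi$ is adjoint (in the appropriate sense, via the Morita-type identification underlying the adjoint construction) to the hermitian form $\la 1,-a\ra\ox h$, where $h$ is the hermitian or skew-hermitian form over the relevant $K$-division algebra with involution to which $\Psi$ is adjoint. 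Hence, by the convention recalled in Section~\ref{sec6} (a product adjoint to a hermitian form is hyperbolic iff that form is), $\Ad\lla a\rra\ox\Psi$ is hyperbolic if and only if $\la 1,-a\ra\ox h\simeq h\perp(-a)h$ is hyperbolic, i.e.\ if and only if $h\simeq a\,h$ (using that $h$ and $-a h$ must be isometric after stabilising, and that $h\perp(-a)h$ hyperbolic forces $ah\simeq h$ by Witt cancellation). This is precisely the condition that $a$ is a similitude factor of $h$.

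The next step is to match ``$a$ is a similitude factor of $h$'' with ``$a\in\G(\Psi)$''. Here I would use the definition $\G(A,\s)=\{\s(x)x\mid x\in\Sim(A,\s)\}$ together with the standard dictionary between similitudes of $(A,\s)$ and similitudes of the underlying (skew-)hermitian form $h$: an element $x\in\mg A$ lies in $\Sim(A,\s)$ with multiplier $\s(x)x=a$ exactly when the corresponding semilinear automorphism scales $h$ by $a$, so that $a\in\G(\Psi)$ iff $ah\simeq h$. Combining this with the previous paragraph gives the equivalence. In the degenerate-unitary case one argues directly: $\Ad\lla a\rra\ox\Psi$ is then split symplectic or again degenerate unitary, hence automatically hyperbolic by \eqref{C:trivial-hyper}, while $\G(\Psi)=\mg K$, so both sides hold trivially.

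The main obstacle I anticipate is making the identification ``$\Ad\lla a\rra\ox\Psi$ is adjoint to $\la 1,-a\ra\ox h$'' precise while keeping within the paper's stated policy of minimizing hermitian forms. A cleaner, more self-contained route — and the one I would ultimately prefer — avoids $h$ entirely: use \eqref{P:hyper-char}, which says $\Ad\lla a\rra\ox\Psi$ is hyperbolic iff $(1)_K$ embeds into it, equivalently iff there is $f\in\Skew{\M_2(K)\ox A,\ad_B\ox\s}$ with $f^2=1$. Writing $\M_2(K)=\End_K(\la 1,-a\ra)$ and taking $f=\left(\begin{smallmatrix}0&x^{-1}\\a x&0\end{smallmatrix}\right)$ for a candidate $x\in\mg A$, the two conditions $\s\ox\ad_B(f)=-f$ and $f^2=1$ translate exactly into $\s(x)x=a$, i.e.\ $x\in\Sim(A,\s)$ with $\s(x)x=a$, which is $a\in\G(\Psi)$. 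Verifying that $\ad_B$ on $\M_2(K)$ with respect to $\la 1,-a\ra$ sends $\left(\begin{smallmatrix}0&u\\v&0\end{smallmatrix}\right)$ to $\left(\begin{smallmatrix}0&-a^{-1}v\\-au&0\end{smallmatrix}\right)$ is the one routine computation, and handling the degenerate case separately as above completes the argument.
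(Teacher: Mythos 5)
The paper itself gives no argument here: it simply cites \cite[(12.20)]{BOI}. Your first route --- identifying $\Ad\lla a\rra\ox\Psi$ with the algebra adjoint to $h\perp(-a)h$ for the (skew-)hermitian form $h$ underlying $\Psi$, invoking Witt cancellation to get ``hyperbolic $\iff ah\simeq h$'', and matching similitude multipliers with $\G(\Psi)$ --- is essentially that cited proof, and it is correct provided you also dispose of the degenerate unitary and split symplectic cases as you do. So as far as the first paragraph and the degenerate case go, your proposal is fine and matches the source.

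The ``cleaner, self-contained'' route you say you prefer has two problems. First, a computational one: with $f=\left(\begin{smallmatrix}0&x^{-1}\\ax&0\end{smallmatrix}\right)$ one gets $f^2=x^{-1}(ax)=a\neq 1$; the element that works is $f=\left(\begin{smallmatrix}0&x^{-1}\\x&0\end{smallmatrix}\right)$, for which $f^2=1$ and the skew-symmetry condition under $\ad_B\ox\s$ (with $\ad_B\bigl(\begin{smallmatrix}0&u\\v&0\end{smallmatrix}\bigr)=\bigl(\begin{smallmatrix}0&-a^{-1}v\\-au&0\end{smallmatrix}\bigr)$) unwinds precisely to $\s(x)x=a$. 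Second, and more seriously, this construction only proves the implication $a\in\G(\Psi)\Rightarrow\Ad\lla a\rra\ox\Psi$ hyperbolic. For the converse, \eqref{P:hyper-char} hands you \emph{some} skew-symmetric $f$ with $f^2=1$, and there is no reason it should have the off-diagonal shape $\left(\begin{smallmatrix}0&u\\v&0\end{smallmatrix}\right)$ with $u,v$ invertible; a general skew-symmetric element of $\M_2(A)$ also has skew-symmetric diagonal entries. To extract a similitude from an arbitrary hyperbolic idempotent you are driven back to the Morita/hermitian-form picture and Witt cancellation --- i.e.\ to your first route. So keep the first argument as the proof and demote the matrix construction to a verification of the easy direction only.
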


\begin{proof} 
See \cite[(12.20)]{BOI}.
\end{proof}

\begin{thm}[Bayer-Fluckiger, Lenstra]\label{T:BFL}
Let $\Psi$ be a $K$-algebra with involution and
let $L/K$ be a finite field extension of odd degree.
Then $\Psi_L$ is hyperbolic if and only if  $\Psi$ is hyperbolic.
\end{thm}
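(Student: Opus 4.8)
The plan is to reduce the statement for algebras with involution to the classical Springer-type theorem for quadratic and hermitian forms over odd-degree extensions. The one direction is trivial: if $\Psi$ is hyperbolic over $K$, then by definition there is $e\in A$ with $e^2=e$ and $\s(e)=1-e$, and the same element works in $A\ox_K L$, so $\Psi_L$ is hyperbolic. The content is in the converse, and here I would separate the degenerate case from the nondegenerate case. If $\Psi$ is degenerate unitary, then $A\simeq A_1\times A_2$ with $\s$ swapping the two factors, and such an algebra with involution is always hyperbolic (it contains $(1)_K$), so there is nothing to prove; more carefully, degeneracy is insensitive to the base field extension when $[L:K]$ is odd since $Z(\Psi)$ remains non-field, so $\Psi_L$ degenerate forces $\Psi$ degenerate, hence hyperbolic by \eqref{C:trivial-hyper} and \eqref{P:hyper-char}.

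For the nondegenerate case, by the structure theory $\Psi$ is adjoint to a hermitian or skew-hermitian form $h$ over some $K$-division algebra with involution $(D,\vartheta)$, and by \eqref{P:hyper-char} (or the remark following the definition of hyperbolicity) $\Psi$ is hyperbolic if and only if $h$ is hyperbolic. The extension $\Psi_L$ is adjoint to the form $h_L$ obtained by scalar extension to $D\ox_K L$, which is again a division algebra (or at worst Brauer-equivalent to one) since odd-degree extensions do not kill the index in a way that changes hyperbolicity of the underlying form — more precisely $h_L$ is hyperbolic iff $h\ox_{(D,\vartheta)}(D\ox_K L)$ is hyperbolic as a hermitian form. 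So the theorem reduces to: \emph{a hermitian or skew-hermitian form over a division algebra with involution that becomes hyperbolic over an odd-degree extension is already hyperbolic.} This is exactly the Bayer-Fluckiger--Lenstra result, which is the odd-degree analogue of Springer's theorem for hermitian forms; one standard route is via a transfer (Scharlau transfer / trace form argument) turning the hermitian statement into a statement about quadratic forms over $K$, where Springer's theorem on odd-degree extensions applies, combined with the fact that the transfer reflects hyperbolicity.

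The main obstacle is making the reduction to hermitian forms and then to quadratic forms genuinely rigorous: one must check that scalar extension of the underlying division algebra with involution behaves well (the residue index under odd-degree extension), that hyperbolicity of an algebra with involution is faithfully detected by hyperbolicity of the associated hermitian form both over $K$ and over $L$, and that some transfer functor reflects hyperbolicity. Since the paper is expository and this theorem is cited (attributed to Bayer-Fluckiger and Lenstra), I expect the cleanest treatment is simply to invoke the hermitian-form version as the essential input and spell out the reduction steps above; alternatively one reproves the hermitian Springer theorem by a direct odd-degree descent argument on the level of involutions, using that if $\Psi_L$ contains $(1)_L$ then a suitable trace construction produces an element of $\Skew{A,\s}$ squaring to $1$ in $A$ itself. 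I would present the short reduction and cite the hermitian input rather than grinding through the transfer computation.
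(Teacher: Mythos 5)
Your proposal is correct and takes essentially the same route as the paper: the paper's entire proof is the citation ``See \cite[(6.16)]{BOI}'', i.e.\ it invokes the Bayer-Fluckiger--Lenstra odd-degree descent for hermitian forms/involutions as a black box, which is exactly the external input you identify after your (correct) treatment of the trivial direction and of the degenerate unitary case.
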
 

\begin{proof}
See \cite[(6.16)]{BOI}.
\end{proof}

The following is a reformulation of the main result in \cite{Jac40}.

\begin{thm}[Jacobson]\label{T:JAC} 
Let $\Phi$ be a $K$-algebra with canonical involution and $\vf$ a  quadratic form over $K$.
Then $\Ad(\vf)\ox \Phi$ is hyperbolic if and only if $\vf\ox\itr(\Phi)$ is hyperbolic.
\end{thm}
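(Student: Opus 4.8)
The plan is to reduce Jacobson's theorem to the classification of $K$-algebras with canonical involution from \eqref{P:can-invol} together with the trace-form computations already assembled. First I would recall that $\Phi$ is one of $(c)_K$ (unitary, possibly degenerate), $(a\qilr b)_K$ (symplectic quaternion), or $(1\qilr 1)_K$-type split cases, or simply $K$ itself with $\can_K=\id_K$. The statement is visibly additive-looking in $\Phi$, but the cleanest route is a case distinction on the kind of $\Phi$.

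For $\Phi=(K,\id_K)$: here $\itr(\Phi)=\la 1\ra$, so the claim is that $\Ad(\vf)$ is hyperbolic iff $\vf$ is hyperbolic, which is immediate from the remarks in Section~\ref{sec4} (an adjoint orthogonal algebra is hyperbolic iff the form is). For $\Phi=(c)_K$ unitary: if $c\in\sq K$ then $\Phi$ is degenerate, $\Ad(\vf)\ox\Phi$ is split degenerate unitary, hence hyperbolic by \eqref{C:trivial-hyper}, while $\itr(\Phi)=\lla c\rra$ is hyperbolic, so $\vf\ox\itr(\Phi)$ is hyperbolic by \eqref{P:hypermult}; both sides are true. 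If $c\notin\sq K$, then $\Ad(\vf)\ox\Phi$ is the adjoint of the hermitian form over the field extension $L=K(\sqrt c)$ obtained by extending $\vf$, i.e.\ of $\vf_L$ viewed as a hermitian form over $(L,\can_L)$; this is hyperbolic iff $\vf_L$ is hyperbolic over $L$, and by Springer's theorem (or a direct transfer argument) $\vf_L$ is hyperbolic over $L=K(\sqrt c)$ iff $\vf\ox\lla c\rra$ is hyperbolic over $K$, which is exactly $\vf\ox\itr(\Phi)$. For $\Phi$ a quaternion algebra with canonical involution, $\Phi=(a\qilr b)_K$: by \eqref{E:itr} we have $\itr(\Phi)=2\lla a,b\rra=\lla a,b\rra\perp\lla a,b\rra$; and $\Ad(\vf)\ox\Phi$ is the adjoint algebra of $\vf$ regarded as a hermitian form over the quaternion algebra $Q=(a,b)_K$ with its canonical (symplectic) involution. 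One then invokes the quaternionic Springer-type result: a hermitian form over $(Q,\can_Q)$ of the shape $\vf$ with entries in $K$ is hyperbolic iff its trace form $\vf\ox\itr(Q,\can_Q)=\vf\ox 2\lla a,b\rra$ is hyperbolic — and this is precisely the content of \cite{Jac40} being reformulated, so in the write-up I would cite \eqref{P:trace-qf-square}, \eqref{P:awi-square} and standard transfer (Scharlau transfer for hermitian forms along $\Trd$) to match the two conditions.

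A more uniform alternative, which I would actually prefer to present, avoids splitting into cases: use \eqref{P:awi-square}, which gives $\Phi\ox\Phi\simeq\Ad(\itr(\Phi))$ when $\Phi$ is of the first kind (and the unitary analogue via \eqref{P:itr-prod}). Tensor the putative hyperbolic algebra with $\Phi$: if $\Ad(\vf)\ox\Phi$ is hyperbolic then by \eqref{P:hypermult} so is $\Ad(\vf)\ox\Phi\ox\Phi\simeq\Ad(\vf)\ox\Ad(\itr(\Phi))\simeq\Ad(\vf\ox\itr(\Phi))$ by \eqref{P:ad-mult}, hence $\vf\ox\itr(\Phi)$ is hyperbolic — this gives one direction cleanly. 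For the converse one needs that hyperbolicity of $\Ad(\vf)\ox\Phi$ can be detected after tensoring with $\Phi$; this is where the real work sits, since tensoring with $\Phi$ can create hyperbolicity. The honest resolution is that $\Phi\ox\Phi$ is Morita-equivalent to $\Ad(\la 1\ra)$-type data split by $\itr(\Phi)$, i.e.\ $\Phi$ is "self-dual" so that hyperbolicity descends; concretely, $(A,\s)\ox\Phi$ hyperbolic and the existence of the embedding $\Psi\hookrightarrow\Ad(\itr\Psi)$ of \eqref{P:awi-embed} force the hermitian form underlying $\Ad(\vf)\ox\Phi$ to be Witt-trivial, and Witt cancellation over the division algebra underlying $\Phi$ returns $\vf$ to being hyperbolic as a hermitian form over $\Phi$, i.e.\ $\Ad(\vf)\ox\Phi$ hyperbolic.

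The main obstacle is precisely this converse descent: passing from hyperbolicity of the trace form $\vf\ox\itr(\Phi)$ (a quadratic form over $K$) back to hyperbolicity of $\vf$ as a hermitian form over the possibly-division algebra with involution $\Phi$. In the split and degenerate cases there is nothing to descend; the genuine content is when $\Phi$ is a division quaternion algebra or a quadratic field extension, where one needs the injectivity (on Witt classes, up to the relevant scaling) of the Scharlau transfer $\Trd_*\colon W(\Phi)\to W(K)$ restricted to forms extended from $K$ — equivalently, Springer's theorem for $K(\sqrt c)/K$ and its quaternionic counterpart. Since the theorem is stated as a reformulation of \cite{Jac40}, in the paper I would isolate this transfer-injectivity as the single external input and reference \cite{Jac40} (or \cite[\S10]{BOI}) for it, then run the two implications as above using only \eqref{P:ad-mult}, \eqref{P:awi-square}, \eqref{P:hypermult}, \eqref{E:itr}, \eqref{C:trivial-hyper} and \eqref{P:can-invol}.
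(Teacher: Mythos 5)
Your forward implication is sound: tensoring $\Ad(\vf)\ox\Phi$ with $\Phi$ and applying \eqref{P:awi-square}, \eqref{P:ad-mult} and \eqref{P:hypermult} does yield hyperbolicity of $\vf\ox\itr(\Phi)$. The converse, however, is the entire content of the theorem, and neither of your routes establishes it. In the case-by-case route the quaternion case is circular (you invoke ``the quaternionic Springer-type result \dots precisely the content of \cite{Jac40}'', i.e.\ the statement being proved), and the unitary case rests on two false intermediate claims: a hermitian form over $(L,\can_L)$, $L=K(\sqrt{c})$, with diagonal entries in $\mg{K}$ is \emph{not} hyperbolic if and only if the quadratic form $\vf_L$ is hyperbolic (for $\vf=\la 1,1\ra$ and $c=2$ over $\qq$ the hermitian form is hyperbolic because $-1\in\D_\qq\lla 2\rra$, while $\vf_L$ is anisotropic), and ``$\vf_L$ hyperbolic'' is not equivalent to ``$\vf\ox\lla c\rra$ hyperbolic'' (take $\vf=\lla -1\rra$ over $\qq$). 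In the uniform route, ``Witt cancellation over the division algebra underlying $\Phi$'' cannot carry you back: cancellation operates inside the Witt group of hermitian forms over $\Phi$, whereas your hypothesis lives in $W\!K$, and you have supplied no mechanism transporting one to the other. So the descent you correctly identify as ``where the real work sits'' is in fact left undone.

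The missing idea is the special feature of the canonical involution singled out in \eqref{P:can-invol}: $\Sym{A,\can_A}=K$. Realize $\Ad(\vf)\ox\Phi$ as the adjoint of the hermitian form $h$ on $V\ox_KA$ extended from $B$, whose trace form is $\vf\ox\itr(\Phi)$. For every vector $x$, the value $h(x,x)$ is a symmetric element of $A$ and hence a \emph{scalar} in $K$, and the trace form evaluated at $x$ is a nonzero multiple of that scalar. Therefore $h$ and its trace form have exactly the same isotropic vectors, so a maximal totally isotropic $K$-subspace for $\vf\ox\itr(\Phi)$ coincides with a maximal totally isotropic $A$-subspace for $h$, and hyperbolicity of either object is equivalent to hyperbolicity of the other. (Equivalently: if $\vf\ox\itr(\Phi)$ is hyperbolic but $h$ is not, the anisotropic kernel of $h$ would have a trace form that is both anisotropic and hyperbolic.) This one observation gives both implications simultaneously, uniformly in all cases, with no Springer-type input and no case distinction; it is what makes the theorem true for canonical involutions and false for general ones.
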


\begin{proof}
Let $\vf=(V,B)$ and $\Phi=(A,\s)$.  
We may assume that $A$ has no zero-divisors since otherwise $\Phi$ and $\itr(\Phi)$ are hyperbolic.
The $K$-algebra with involution $\Ad(\vf)\ox \Phi$ is adjoint to the hermitian form $(V_A,h)$ over $\Phi$ obtained from $\vf$, with $V_A=V\ox_KA$ and  $h:V_A\x V_A\rightarrow A$  determined by $h(v\ox a, w\ox b)=\s({a})B(v,w)b$ for $a,b\in A$ and $v,w\in V$.
Let $T:A\to K$, $x\mapsto x+\s(x)$.
Note that $T\circ h$ coincides with $B\otimes T_\s$ on $V_A\x V_A$ up to a scalar factor. The isotropic vectors of the hermitian form $h$ and of the quadratic forms $T\circ h$ and $B\otimes T_\s$ therefore coincide. In particular, a maximal totally isotropic $A$-subspace for $h$ is the same as a maximal totally isotropic $K$-subspace for $B\otimes T_\s$. This yields the statement.
 \end{proof}

\begin{thm}[Bayer-Fluckiger, Shapiro, Tignol]
\label{quExt} Let $\Psi$ be a $K$-algebra with involution and 
 $a\in\mg{K}\setminus\sq{K}$.  Then
$\Psi_{K(\sqrt{a})}$ is hyperbolic if and only if $(a)_K$ embeds into $\Psi$ or  
$\Psi \simeq \Ad(\varphi)$ for a quadratic form $\vf$ over $K$ whose anisotropic part is a multiple of $\lla a\rra$.
\end{thm}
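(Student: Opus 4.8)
The plan is to prove the two implications separately, using the description of $\Psi_{K(\sqrt a)}$ being hyperbolic in terms of an embedding. First I would handle the easy direction: if $(a)_K$ embeds into $\Psi$, then since $(a)_{K(\sqrt a)}$ is degenerate unitary (as $a$ becomes a square over $K(\sqrt a)$), it is hyperbolic by \eqref{C:trivial-hyper}, hence $\Psi_{K(\sqrt a)}$ is hyperbolic by \eqref{P:hyper-char} and \eqref{P:hypermult} — more precisely, the embedding $(a)_K\hookrightarrow\Psi$ base changes to an embedding $(a)_{K(\sqrt a)}\hookrightarrow\Psi_{K(\sqrt a)}$, and a hyperbolic subalgebra of the form $(1)_{K(\sqrt a)}$ (which $(a)_{K(\sqrt a)}$ contains) forces $\Psi_{K(\sqrt a)}$ hyperbolic. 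If instead $\Psi\simeq\Ad(\vf)$ where the anisotropic part of $\vf$ is a multiple of $\lla a\rra$, say $\vf$ is Witt equivalent to $\psi\ox\lla a\rra$, then $\lla a\rra_{K(\sqrt a)}$ is hyperbolic, hence $\vf_{K(\sqrt a)}$ is hyperbolic, hence $\Psi_{K(\sqrt a)}\simeq\Ad(\vf_{K(\sqrt a)})$ is hyperbolic. This direction should be routine.

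For the converse, assume $\Psi_{K(\sqrt a)}$ is hyperbolic and $(a)_K$ does not embed into $\Psi$; the goal is to show $\Psi\simeq\Ad(\vf)$ with anisotropic part of $\vf$ a multiple of $\lla a\rra$. I expect one must split into cases according to the kind of $\Psi$. Write $\Psi\sim\Phi$ for the underlying division algebra with involution situation; by \eqref{C:quat-awi-class} and the structure theory, the relevant possibilities are controlled by the Brauer class and type. The key point is: hyperbolicity of $\Psi_{K(\sqrt a)}$ means $(1)_{K(\sqrt a)}$ embeds into $\Psi_{K(\sqrt a)}$, equivalently (since $(1)_{K(\sqrt a)}\simeq(a)_{K(\sqrt a)}$ and taking Galois descent / a corestriction argument, or directly) that $\Psi$ admits an embedding of $(a)_K$ — which we are assuming fails — UNLESS the obstruction is absorbed on the split/adjoint side. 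Concretely, one wants to invoke a transfer or Scharlau-transfer argument: the hermitian form to which $\Psi$ is adjoint becomes hyperbolic after scalar extension to $K(\sqrt a)$, so by the standard exact sequence for hermitian forms under a quadratic extension its Witt class comes from the corestriction, forcing its anisotropic part to be "divisible by $\lla a\rra$" in the appropriate sense; when $\Psi$ is split orthogonal this literally says the anisotropic part of $\vf$ is a multiple of $\lla a\rra$, and when the division algebra part is nontrivial one shows instead that $(a)_K$ must embed, giving the dichotomy.

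The main obstacle will be the converse in the non-split, non-symplectic-quaternion cases: translating "$\Psi_{K(\sqrt a)}$ hyperbolic" into a clean statement about the underlying hermitian form over the division algebra with involution, and then running the quadratic-extension exact sequence (à la \cite[Chap.~7, \S3]{Scharlau}) to conclude that either the form is already hyperbolic over $K$ (absorbed into an embedding of $(a)_K$, via \eqref{P:hyper-char}) or $\Psi$ is forced to be adjoint to a quadratic form over $K$ — which in turn forces $\Psi$ split orthogonal by a dimension/type count — with anisotropic part a multiple of $\lla a\rra$. I would structure the argument by first reducing, via \eqref{P:JAC}, \eqref{P:quat-qil-rep} and \eqref{C:quat-awi-class}, to writing $\Psi\simeq\Phi\ox\Ad(\vf)$ in the cases where that is possible and handling the genuinely non-split unitary case by the theorem of \cite{Queg}-type descent, then in each case apply \eqref{P:simhyp} or \eqref{T:JAC} together with the quadratic-extension behaviour of quadratic and hermitian forms to extract the stated conclusion.
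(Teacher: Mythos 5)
Your forward direction is fine and is essentially the standard argument: an embedding $(a)_K\hookrightarrow\Psi$ base-changes to an embedding of $(a)_{K(\sqrt{a})}$, which is degenerate unitary, hence hyperbolic by \eqref{C:trivial-hyper}, hence contains $(1)_{K(\sqrt{a})}$, so $\Psi_{K(\sqrt{a})}$ is hyperbolic by \eqref{P:hyper-char}; and the $\Ad(\vf)$ alternative is immediate from quadratic form theory. Note, however, that the paper does not reprove the converse at all in the non-split-orthogonal cases: it treats only the split orthogonal case directly, where $\Psi\simeq\Ad(\vf)$ and the claim reduces to the classical fact that $\vf_{K(\sqrt{a})}$ is hyperbolic iff the anisotropic part of $\vf$ is a multiple of $\lla a\rra$ (Lam, Chap.~VII, (3.2)), and cites \cite[(3.3)]{BST} and \cite[(3.6)]{LU} for everything else. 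You are attempting to reprove the Bayer-Fluckiger--Shapiro--Tignol theorem itself, which is the substantive content here.

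And that is where the gap is: your converse is a plan, not a proof. The assertion that the quadratic-extension transfer sequence for hermitian Witt groups ``forces'' the dichotomy is exactly the hard step, and it is not carried out. Two specific problems. First, the conclusion ``$(a)_K$ embeds into $\Psi$'' is a statement about the algebra with involution itself, not about a Witt class: it asserts the existence of a skew-symmetric unit $g\in A$ with $g^2=a$, and extracting such an element from the vanishing of a Witt class under scalar extension is precisely the descent argument of \cite{BST} that you would need to reproduce (it involves the hyperbolic idempotent over $K(\sqrt{a})$ and its conjugate, not a formal exact-sequence manipulation). The subtlety is visible already in the split orthogonal case: by the Remark following the theorem, $(a)_K$ embeds into $\Ad(\vf)$ only when $\vf$ itself (not merely its anisotropic part) is a multiple of $\lla a\rra$, so the two alternatives genuinely overlap and diverge at the level of isometry classes rather than Witt classes. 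Second, your parenthetical that the hyperbolic-over-$K$ case is ``absorbed into an embedding of $(a)_K$, via \eqref{P:hyper-char}'' is confused: \eqref{P:hyper-char} characterizes hyperbolicity by an embedding of $(1)_K$, not of $(a)_K$, and indeed $(a)_K$ does not embed into the hyperbolic algebra $\Ad(\la 1,-1\ra)$ for $a\notin\pm\sq{K}$ (its skew-symmetric elements all have square in $\sq{K}\cup\{0\}$); that case is covered by the second alternative, with $\vf_{\an}=0$. To make your route work you would either have to carry out the BST descent in each of the remaining cases (non-split first kind, split symplectic, unitary), or do as the paper does and cite \cite{BST} and \cite{LU}.
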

\begin{proof}
Assume first that $\Psi$ is split orthogonal, so that $\Psi\simeq \Ad(\vf)$ for a form $\vf$ over $K$. Then $\Psi_{K(\sqrt{a})}$ is hyperbolic if and only if $\vf_{K(\sqrt{a})}$ is hyperbolic, which by \cite[Chap.~VII, (3.2)]{Lam} is the case if and only if the anisotropic part of $\vf$ is a multiple of $\lla a\rra$.
In the remaining cases, the statement is proven in \cite[(3.3)]{BST} for involutions of the first kind, and an  adaptation of the argument for involutions of the second kind is provided in \cite[(3.6)]{LU}.
\end{proof}

\begin{rem}  
There is an overlap in the two cases of the characterization given in \Cref{quExt}.
Assume that $a\in\mg{K}\setminus\sq{K}$ and $\vf$ is a form over $K$.
Then $(a)_K$ embeds into $\Ad(\vf)$ if and only if 
$\vf$ is a multiple of $\lla a\rra$, if and only if the anisotropic part $\vf_\an$ of $\vf$ is a multiple of $\lla a\rra$ and $\vf\simeq \vf_\an\perp 2m\times \hh$ for some $m\in\nat$. 
\end{rem}

\begin{cor}\label{C:DinG} 
Let $\Psi$ be a $K$-algebra with involution. 
For any $a\in\mg{K}\setminus\sq{K}$ such that $\Psi_{K(\sqrt{a})}$ is hyperbolic, we have that $\D_K\lla a\rra \subseteq \G(\Psi)$.
\end{cor}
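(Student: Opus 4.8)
The plan is to fix an element $c\in\D_K\lla a\rra$ and show directly that $c\in\G(\Psi)$. Although $\G(\Psi)$ is a subgroup of $\mg K$, the group $\D_K\lla a\rra$ is in general not generated modulo squares by finitely many elements (for $a=-1$ it is the group of sums of two squares), so there is no reduction to a fixed set of generators, and one must treat an arbitrary represented value $c=x^2-ay^2$ with $x,y\in K$ head on. The natural entry point is the dichotomy of \eqref{quExt}: either $(a)_K$ embeds into $\Psi$, or $\Psi\simeq\Ad(\vf)$ for a quadratic form $\vf$ over $K$ whose anisotropic part is a multiple of $\lla a\rra$. I would treat the two cases separately.

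Suppose first that $(a)_K$ embeds into $\Psi=(A,\s)$. Unwinding the definition, the image of $\sqrt a$ under such an embedding is an element $u\in A$ with $u^2=a$ and $\s(u)=-u$; it is invertible with $u^{-1}=a^{-1}u$. Writing $c=x^2-ay^2$ with $x,y\in K$ and setting $z=x+uy\in A$, one computes $\s(z)z=z\s(z)=x^2-ay^2=c$, using that $x$ and $y$ are central. Since both $\s(z)z$ and $z\s(z)$ equal $c\in\mg K$, the element $z$ is invertible, hence $z\in\Sim(\Psi)$ and $c=\s(z)z\in\G(\Psi)$. This settles the first case immediately.

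Suppose next that $\Psi\simeq\Ad(\vf)$, with $\vf_\an\simeq\psi\ox\lla a\rra$ for some form $\psi$ over $K$ (possibly empty, so that $\vf$ is hyperbolic). Here I would invoke the roundness of Pfister forms: as $c$ is represented by $\lla a\rra$ we have $c\lla a\rra\simeq\lla a\rra$, whence $c\vf_\an\simeq\vf_\an$; since scaling the hyperbolic part of $\vf$ by $c$ leaves it unchanged, this gives $c\vf\simeq\vf$, that is, $[\lla c\rra\ox\vf]=[\vf]-[c\vf]=0$ in $W\!K$, so $\lla c\rra\ox\vf$ is hyperbolic. By \eqref{P:ad-mult} we then have $\Ad\lla c\rra\ox\Psi\simeq\Ad(\lla c\rra\ox\vf)$, which is hyperbolic, and \eqref{P:simhyp} gives $c\in\G(\Psi)$ (using that $\G$ depends only on the isomorphism class of $\Psi$).

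I do not expect a genuine obstacle here: the corollary is a short consequence of \eqref{quExt}. The one thing to get right is that the two cases really do require different mechanisms --- an explicit similitude inside $A$ in the first, a Witt-ring computation built on the roundness of $\lla a\rra$ in the second --- so that no single uniform argument is attempted; of the two, the adjoint case is the one where a little care (the reduction to the anisotropic part and the roundness input) is needed.
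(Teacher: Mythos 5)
Your proposal is correct and follows the paper's own route: the published proof is the one-liner ``As $\D_K\lla a\rra=\G\,(a)_K$, the statement follows immediately from \eqref{quExt}'', and your two cases are precisely the details that this ``immediately'' compresses. Your explicit similitude $z=x+uy$ in the embedding case is the identity $\G\,(a)_K=\D_K\lla a\rra$ transported along the embedding, and your roundness computation supplies the adjoint case (where $(a)_K$ need not embed) that the paper leaves implicit.
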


\begin{proof}  
As $\D_K\lla a\rra=\G((a)_K)$, the statement follows immediately  from \Cref{quExt}.
\end{proof}

The following was already observed in \cite[p.~91, Examples~(b) and (c)]{BOI}.

\begin{prop}
\label{P:bqhyp}
Let $Q_1$ and $Q_2$ be $K$-quaternion algebras. The $K$-algebra with involution $(Q_1, \can_{Q_1})\ox (Q_2, \can_{Q_2})$ is hyperbolic if and only if one of $Q_1$ and $Q_2$ is split.
\end{prop}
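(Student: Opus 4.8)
The plan is to prove both implications via the tools already developed, chiefly \eqref{T:JAC} (Jacobson), the trace-form computations in \eqref{E:itr}, and the classification of $3$-fold Pfister forms.

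\medskip

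\textbf{Proof proposal.}
Write $\Phi_i=(Q_i,\can_{Q_i})$ and fix $a_i,b_i\in\mg K$ with $Q_i\simeq(a_i,b_i)_K$, so that $\Phi_i\simeq(a_i\qilr b_i)_K$ by \eqref{P:quat-qil-rep}. First I would handle the easy direction: if, say, $Q_1$ is split, then $\Phi_1$ is split symplectic, hence hyperbolic by \eqref{C:trivial-hyper}, and therefore $\Phi_1\ox\Phi_2$ is hyperbolic by \eqref{P:hypermult}. For the converse I would rewrite the tensor product so that \eqref{T:JAC} applies. Since $\Phi_1$ is symplectic of index (at most) $2$, by \eqref{P:JAC} we have $\Phi_1\simeq\Phi\ox\Ad(\vf)$ for a $K$-algebra with canonical involution $\Phi$ and a quadratic form $\vf$ over $K$; in fact, as $\Phi_1$ is a quaternion algebra with its canonical involution, one can simply take $\Phi=\Phi_1$ and $\vf=\la 1\ra$. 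The cleaner route is instead to use $\Phi_2=(Q_2,\can_{Q_2})$ as the ``canonical involution factor'' and view $\Phi_1\ox\Phi_2$: since $\Phi_1\simeq\Ad(\la1\ra)\ox\Phi_1$, Jacobson's theorem \eqref{T:JAC} applied with $\vf=\itr(\Phi_1)$ is not quite the right shape, so I would instead argue as follows.

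\medskip

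The key step: $\Phi_1\ox\Phi_2$ is hyperbolic if and only if $\itr(\Phi_1\ox\Phi_2)$ is a hyperbolic (torsion-free) $8$-dimensional quadratic form. By \eqref{C:hyper-trace}, hyperbolicity of $\Phi_1\ox\Phi_2$ implies hyperbolicity of $\itr(\Phi_1\ox\Phi_2)$. Conversely, since $\Phi_1\ox\Phi_2$ is split orthogonal or symplectic of index $\le 4$ — in any case adjoint to a hermitian form over a quaternion algebra with involution, or split — one can recover hyperbolicity of the algebra with involution from that of an associated quadratic form; more directly, $\Phi_1\ox\Phi_2\simeq\Ad(\la1\ra)\ox(\Phi_1\ox\Phi_2)$ and by \eqref{T:JAC} (with $\Phi=\Phi_1\ox\Phi_2$, which is a $K$-algebra with canonical involution only when one $Q_i$ splits — so this needs care) hyperbolicity is governed by $\itr(\Phi_1\ox\Phi_2)$. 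By \eqref{P:itr-prod} (first kind case) and \eqref{E:itr},
\[
\itr(\Phi_1\ox\Phi_2)\simeq\itr(\Phi_1)\ox\itr(\Phi_2)\simeq 2\lla a_1,b_1\rra\ox 2\lla a_2,b_2\rra\,,
\]
which lies in $I^2K$ up to the factor $2\x 2=4$; more usefully, $\itr(\Phi_i)\simeq 2\lla a_i,b_i\rra$ has anisotropic part the pure part of $\lla a_i,b_i\rra$, and $Q_i$ is split iff $\lla a_i,b_i\rra$ is hyperbolic. So I would reduce to: $2\lla a_1,b_1\rra\ox 2\lla a_2,b_2\rra$ hyperbolic $\iff$ $\lla a_1,b_1\rra$ or $\lla a_2,b_2\rra$ hyperbolic.

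\medskip

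The cleanest finish avoids trace forms entirely. I would use \eqref{quExt} or, better, \eqref{P:simhyp} combined with \eqref{C:DinG}: $(Q_1,\can_{Q_1})\ox(Q_2,\can_{Q_2})$ hyperbolic means (by \eqref{P:hyper-char}) $(1)_K$ embeds, equivalently (taking a quadratic splitting field $K(\sqrt a)$ of $Q_1$ with $a\notin\sq K$, assuming $Q_1$ nonsplit) one checks that $\Phi_2\ox K(\sqrt a)$ together with the embedded $(a)_K$ forces a constraint. Concretely: if $Q_1$ is nonsplit, pick $a=a_1\in\mg K\setminus\sq K$ with $Q_1\simeq(a,b_1)_K$; then over $L=K(\sqrt a)$, $\Phi_{1,L}$ is split symplectic hence hyperbolic, so $(\Phi_1\ox\Phi_2)_L$ is automatically hyperbolic and gives no information — so this direction must run the other way. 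Instead: $\Phi_1\ox\Phi_2$ hyperbolic and $Q_1$ nonsplit; I claim $Q_2$ splits. Apply \eqref{T:JAC} with $\vf=\lla a_1,b_1\rra$ scaled appropriately and $\Phi=\Phi_2$? The honest obstacle here is matching the shape ``$\Ad(\vf)\ox\Phi$'' to ``$\Phi_1\ox\Phi_2$'': this requires $\Phi_1\simeq\Ad(\vf)$ for some form $\vf$, which holds only when $Q_1$ splits (then $\vf$ is a $2$-dimensional form), i.e.\ precisely the trivial case. So \eqref{T:JAC} does not directly apply, and I expect \textbf{the main obstacle to be organizing the nonsplit case}: I would resolve it by passing to the function field $K(Q_1)$ of the Severi--Brauer variety of $Q_1$, over which $Q_1$ splits and $\Phi_1$ becomes $\Ad(\vf_0)$ for a $2$-dimensional form; then $(\Phi_1\ox\Phi_2)_{K(Q_1)}\simeq\Ad(\vf_0)\ox\Phi_{2,K(Q_1)}$ is hyperbolic, so by \eqref{T:JAC} $\vf_0\ox\itr(\Phi_2)_{K(Q_1)}$ is hyperbolic, hence $\itr(\Phi_2)\simeq 2\lla a_2,b_2\rra$ becomes hyperbolic over $K(Q_1)$; and by the standard fact (e.g.\ via \cite[Chap.~X]{Lam} or the Arason--Pfister type results, or Merkurjev's index computation — but the paper does not cite these) a $2$-fold Pfister form $\lla a_2,b_2\rra$ becomes hyperbolic over $K(Q_1)$ iff $(a_2,b_2)_K\simeq Q_1$ or $\lla a_2,b_2\rra$ is already hyperbolic, and in the first case one would need additionally that $\Phi_1\ox\Phi_2$ hyperbolic with $Q_1\simeq Q_2$ forces a contradiction unless $Q_1$ splits (since $(Q,\can_Q)\ox(Q,\can_Q)\simeq\Ad(\itr(Q,\can_Q))=\Ad(2\lla a,b\rra)$ by \eqref{P:awi-square}, which is hyperbolic iff $2\lla a,b\rra$ is, iff $Q$ splits). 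This last observation, $(Q,\can_Q)^{\ox 2}\simeq\Ad(2\lla a,b\rra)$ via \eqref{P:awi-square}, together with \eqref{P:itr-prod}, handles the $Q_1\simeq Q_2$ subcase cleanly and is the crux; the general nonsplit case then follows since $Q_1\ncong Q_2$ and both nonsplit would make $\lla a_2,b_2\rra$ anisotropic over $K(Q_1)$, so $\itr(\Phi_2)_{K(Q_1)}$ is not hyperbolic, contradicting the above. Hence $Q_2$ splits.
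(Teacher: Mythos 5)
Your easy direction and your treatment of the subcase $Q_1\simeq Q_2$ (via \eqref{P:awi-square}) are fine, but the converse for two distinct nonsplit quaternion algebras has two genuine gaps. First, your ``key step'' biconditional is false: hyperbolicity of $\itr(\Psi)$ does not imply hyperbolicity of $\Psi$. Here $\itr(\Phi_1\ox\Phi_2)$ is similar to the $4$-fold Pfister form $\lla a_1,b_1,a_2,b_2\rra$, and a product of two anisotropic $2$-fold Pfister forms can be hyperbolic: over $K=\qq_p$ with $(a,b)_K$ the quaternion division algebra, $\lla a,b\rra\ox\lla a,b\rra$ is Witt equivalent to $\lla -1,-1,a,b\rra\in I^4\qq_p=0$, hence hyperbolic, while $\lla a,b\rra$ is anisotropic. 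So the quadratic-form statement you propose to reduce to is itself false, and in this same example $\Phi_1\ox\Phi_2\simeq\Ad(\lla a,b\rra)$ is not hyperbolic even though its trace form is.

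Second, the $K(Q_1)$ argument is vacuous. Over $L=K(Q_1)$ the factor $\Phi_{1,L}$ becomes split \emph{symplectic}, hence hyperbolic by \eqref{C:trivial-hyper}; it is not $\Ad(\vf_0)$ for any quadratic form $\vf_0$, since the canonical involution on $\M_2(L)$ is adjoint to an alternating form, so \eqref{T:JAC} cannot be applied as you do. Worse, $(\Phi_1\ox\Phi_2)_L$ is hyperbolic for \emph{every} $Q_2$ by \eqref{P:hypermult}, so no implication of the shape ``$(\Phi_1\ox\Phi_2)_L$ hyperbolic $\Rightarrow\itr(\Phi_2)_L$ hyperbolic'' can hold: it would contradict the very anisotropy of $\lla a_2,b_2\rra_L$ that you invoke at the end. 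Generic splitting of $Q_1$ destroys exactly the information you need. The paper instead argues directly and elementarily: by \eqref{P:hyper-char} take $f\in\Skew{A,\s}$ with $f^2=1$, write $f=f_1+f_2$ with $f_i$ a pure quaternion of $Q_i$; then $1=f_1^2+f_2^2+2f_1f_2$ forces $f_1f_2\in K$, hence $f_1f_2=0$, hence some $f_i=0$, and the remaining pure quaternion of square $1$ splits its quaternion algebra. An argument of that flavour, working with the hyperbolic element itself, is what this statement needs.
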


\begin{proof}
Let $(A,\s)= (Q_1, \can_{Q_1})\ox (Q_2, \can_{Q_2})$.
If one of the factors is split, it is hyperbolic, and thus $(A,\s)$ is hyperbolic.
Assume now that $(A,\s)$ is hyperbolic.
Then by \Cref{P:hyper-char} there exists $f\in\Skew{\s}$ with $f^2=1$.
We identify $Q_1$ and $Q_2$ with $K$-subalgebras of $A$ that commute with each other elementwise and such that $\s|_{Q_i}=\can_{Q_i}$ for $i=1,2$.
Then $\Skew{\s}=Q_1'\oplus Q_2'$ where $Q_i'$ is the $K$-subspace of pure quaternions of $Q_i$ for $i=1,2$.
Writing $f=f_1+f_2$ with $f_i\in Q_i'$ for $i=1,2$, we obtain that
 $1=f^2=f_1^2+f_2^2+2f_1f_2$. As $f_1^2, f_2^2\in K$, we conclude that $f_1f_2\in K$. 
 This is only possible if $f_1f_2=0$, that is, if either $f_1=0$ or $f_2=0$.
If, say, $f_2=0$, then $f=f_1$, which then is a hyperbolic element with respect to $\s$ contained in $Q_1$,
whereby $Q_1$ is split.
Hence, one of $Q_1$ and $Q_2$ is split.
 \end{proof}

\begin{thm}[Karpenko, Tignol]
\label{T:KT}
Let  $\Psi$ be a $K$-algebra with involution such that $\Psi\ox \Psi$ is split.
If $\Psi$ is not hyperbolic, then there exists a field extension $L/K$ such that $\Psi_L$ is not hyperbolic and, either $\Psi_L$ is split, or $\Psi$ is symplectic and $\Psi_L$ is Brauer equivalent to an $L$-quaternion algebra.
\end{thm}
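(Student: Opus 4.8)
The plan is to reduce Theorem \eqref{T:KT} to the previously established structure theory of algebras with involution of small index. First I would observe that the hypothesis $\Psi\ox\Psi$ split forces strong constraints: applying $\type$ we get $\type(\Psi)^2=1$, so $\Psi$ is of the first kind, orthogonal or symplectic. Moreover $[\Psi\ox\Psi]=2[A]=0$ in $\Br(K)$ is automatic for the first kind, so the real content of ``$\Psi\ox\Psi$ split'' is that the underlying algebra $A\ox_K A\simeq \M_{n^2}(K)$, i.e.\ $A\ox_K A$ is a matrix algebra; by the structure of the Brauer group this does \emph{not} force $A$ itself to be split, but it does force $[A]$ to be in the kernel of multiplication by $2$, which for the first kind is everything, so this needs to be read more carefully as $A\ox_KA$ being \emph{split as a $K$-algebra with the given involution behaviour}. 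In any case the relevant point is that $\ind(A)\in\{1,2\}$: indeed if $A\ox_KA$ is split then $\ind(A)$ divides $\deg(A)$ and $\ind(A)^2\mid$ something forcing $\ind(A)\le 2$. So $\Psi$ is split orthogonal, split symplectic, or (orthogonal or symplectic) of index $2$.

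Next I would dispose of the easy cases. If $\Psi$ is split symplectic it is hyperbolic by \eqref{C:trivial-hyper}, contradicting the hypothesis, so this case does not arise. If $\Psi$ is symplectic of index $2$, then by \eqref{P:JAC} (or directly) $\Psi\simeq \Phi\ox\Ad(\vf)$ for a $K$-quaternion algebra with canonical involution $\Phi$ and a form $\vf$; here one can simply take $L=K$ (or a transcendental extension) — the point is that $\Psi$ itself is already non-hyperbolic of index $2$ and symplectic, so $L=K$ works unless one needs $\Psi_L$ split, which one does not in the symplectic alternative of the conclusion. If $\Psi$ is split orthogonal, then $\Psi\simeq\Ad(\vf)$ for an anisotropic-plus-hyperbolic form $\vf$ over $K$, and $\Psi$ non-hyperbolic means the anisotropic part $\vf_\an$ is nonzero; taking $L=K$ keeps $\Psi_L=\Psi$ split and non-hyperbolic, so again $L=K$ suffices.

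The substantive case is $\Psi$ \emph{orthogonal of index $2$}. Here I would write $\Psi\simeq \Phi\ox\Ad(\vf)$ with $\Phi\simeq(a\qil b)_K$ a $K$-quaternion algebra with orthogonal involution (using \eqref{C:quat-awi-class} and \eqref{P:JAC} — note \eqref{P:JAC} as stated covers split and symplectic index $2$, so for the orthogonal index-$2$ case I would instead argue directly that $\Psi\sim\Phi$ for a quaternion algebra with involution $\Phi$ and then diagonalise the associated hermitian form). The goal is a field extension $L/K$ over which the underlying quaternion algebra $Q$ splits, since then $\Psi_L$ becomes split orthogonal, while arranging that $\Psi_L$ stays non-hyperbolic. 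The natural candidate is a generic splitting field of $Q$, e.g.\ $L=K(Q)$ the function field of the Severi--Brauer conic, or more elementarily $L=K(\sqrt a)$ if that does not already hyperbolise $\Psi$. If $\Psi_{K(\sqrt a)}$ is non-hyperbolic we are done with $L=K(\sqrt a)$. If $\Psi_{K(\sqrt a)}$ \emph{is} hyperbolic, then by \eqref{quExt} applied with this $a$, either $(a)_K$ embeds into $\Psi$ or $\Psi\simeq\Ad(\psi)$ with $\psi_\an$ a multiple of $\lla a\rra$; the latter contradicts $\ind(\Psi)=2$, and in the former case I would exploit the embedding of $(a)_K$ together with the quaternion structure to find a different splitting field — passing instead to $K(\sqrt b)$ or to $K(Q)$ — over which $Q$ splits but hyperbolicity fails, using that generic splitting does not create isotropy of the anisotropic kernel beyond what the residue/specialisation forces. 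The main obstacle I anticipate is precisely this: proving that \emph{some} splitting field of $Q$ fails to hyperbolise $\Psi$, i.e.\ that $\Psi$ is not ``hyperbolic over every splitting field of $Q$''; this is where one genuinely uses that $\Psi$ is non-hyperbolic over $K$ and not merely index $2$, and it is the content one would extract from \cite{quExt} together with an index argument (a form whose anisotropic part is divisible by $\lla a\rra$ and which is adjoint to an index-$2$ algebra would have to be hyperbolic). I would organise the argument so that the only delicate step is this generic-splitting-field non-hyperbolicity, with everything else being bookkeeping via the structure results \eqref{P:JAC}, \eqref{C:quat-awi-class}, \eqref{C:trivial-hyper}, and \eqref{quExt}.
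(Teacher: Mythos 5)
There is a fatal gap at the very first step: your claim that ``$A\ox_KA$ split forces $\ind(A)\le 2$'' is false, and it silently deletes the entire content of the theorem. For any involution of the first kind, $\s$ gives a $K$-isomorphism $A\simeq A^{\op}$, so $A\ox_KA\simeq A\ox_KA^{\op}\simeq\End_K(A)$ is \emph{always} split, whatever the index of $A$; this is exactly why the paper remarks after the statement that the hypothesis is automatic in the first kind. Algebras of exponent $2$ and index $2^n$ for arbitrary $n$ exist (tensor products of quaternion division algebras), and they carry orthogonal and symplectic involutions. The whole difficulty of the Karpenko--Tignol theorem is to pass from index $2^n$ down to index $1$ (orthogonal, unitary) or $2$ (symplectic) without destroying non-hyperbolicity; the index-$2$ orthogonal case that you isolate as ``the substantive case'' is in fact the easy special case recorded separately as \eqref{T:DPSS}. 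The paper does not prove \eqref{T:KT} by elementary means at all: it cites \cite[(1.1)]{Kar} and \cite[(A.1), (A.2)]{Tignol}, whose proofs rest on Chow groups, Steenrod operations and the theory of upper motives. Nothing in the toolbox you invoke (\eqref{P:JAC}, \eqref{C:quat-awi-class}, \eqref{quExt}) touches the higher-index case.

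Two further problems. First, your type computation is wrong: $\type(\Psi\ox\Psi)=\type(\Psi)^2$ equals $0$, not $1$, when $\Psi$ is unitary, and ``split'' refers to the index, not the type, so the unitary case is not excluded. The hypothesis ``$\Psi\ox\Psi$ split'' is there precisely to impose exponent at most $2$ in the unitary case, which is covered by Tignol's (A.2) and dropped entirely by your argument. Second, even in the index-$2$ orthogonal case, the ``delicate step'' you defer --- that the function field of the conic does not hyperbolise $\Psi$ --- is exactly the Dejaiffe/Parimala--Sridharan--Suresh theorem \eqref{T:DPSS}; the appeal to ``generic splitting does not create isotropy beyond what residues force'' is not an argument, and for hermitian forms over quaternion algebras this is a genuinely nontrivial result. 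If you restricted the statement to $\ind(\Psi)\le 2$ and orthogonal type, a correct proof along your lines could be assembled from \eqref{T:DPSS}; as a proof of \eqref{T:KT} as stated, the proposal does not get off the ground.
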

\begin{proof}
See \cite[(1.1)]{Kar} for the orthogonal case and \cite[(A.1) and (A.2)]{Tignol} for the other cases.
\end{proof}

Note that the condition in \Cref{T:KT} that $\Psi\ox\Psi$ be split is trivially satisfied if $\Psi$ is a $K$-algebra with involution of the first kind.

We mention separately  the following special case of \Cref{T:KT}, which was obtained earlier by  more classical methods. 
It will be used in \Cref{P:final}.

\begin{thm}[Dejaiffe, Parimala, Sridharan, Suresh]\label{T:DPSS}
Let $a,b\in\mg{K}$ and let $L$ be the function field of the conic $aX^2+bY^2=1$ over $K$.
Let $\Psi$  be a $K$-algebra with orthogonal involution such that $\underline{\Psi}\sim (a,b)_K$.
Then $\Psi$ is hyperbolic if and only if $\Psi_L$ is hyperbolic.
\end{thm}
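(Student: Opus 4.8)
The plan is to reduce the statement to a question about quadratic forms by passing through the adjoint picture, and then invoke the classical Cassels--Pfister subform machinery together with \eqref{quExt}. The one direction is free: if $\Psi$ is hyperbolic, then $\Psi_L$ is hyperbolic by \eqref{P:hypermult} (hyperbolicity is preserved under scalar extension since a hyperbolic idempotent $e$ with $\s(e)=1-e$ remains one over $L$). So the content is the converse: assuming $\Psi_L$ is hyperbolic, we must show $\Psi$ itself is hyperbolic.

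For the converse I would first observe that $\Psi$ is, by hypothesis, a $K$-algebra with orthogonal involution with $\Psi\sim(a\qil b)_K$; in particular $[\Psi]=[(a,b)_K]$ in $\Br(K)$, and $\Psi$ is split orthogonal precisely when $(a,b)_K$ splits, i.e.\ when $\lla a,b\rra$ is hyperbolic over $K$. Now the function field $L=K(C)$ of the conic $aX^2+bY^2=1$ is exactly the generic splitting field of $(a,b)_K$: $\Psi_L$ becomes split orthogonal, so $\Psi_L\simeq\Ad(\psi)$ for some quadratic form $\psi$ over $L$, and $\Psi_L$ hyperbolic means $\psi$ is hyperbolic over $L$. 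The strategy is then to descend this hyperbolicity from $L$ to $K$. If $(a,b)_K$ is already split over $K$, then $\Psi\simeq\Ad(\vf)$ for a form $\vf$ over $K$ with $\vf_L$ hyperbolic; since $L/K$ is the function field of a conic, a form that becomes hyperbolic over $L$ already becomes hyperbolic over $K(\sqrt a)$ (or has its anisotropic part a multiple of $\lla a,b\rra$, which is hyperbolic here), and one concludes $\vf$ is hyperbolic. The genuinely interesting case is when $(a,b)_K$ is a division algebra: then $\Psi$ is a non-split orthogonal involution on the quaternion algebra $(a,b)_K$ (after a Morita-type reduction using \eqref{P:JAC} and the structure results of Section~\ref{sec4}, $\Psi\simeq(a\qil b)_K\ox\Ad(\vf)$ for a form $\vf$ over $K$). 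Here I would use that $\Psi_L$ hyperbolic forces, via \eqref{T:JAC} applied to the canonical-involution part, the form $\vf\ox\itr$ of the relevant algebra with canonical involution to be hyperbolic over $L$, and then push this back to $K$ by a norm-principle / transfer argument for the conic, precisely the kind of descent carried out by Dejaiffe and by Parimala--Sridharan--Suresh.

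The main obstacle will be the descent step in the division case: hyperbolicity over a conic function field does not in general descend, and what makes it work here is the special relationship between the conic and the algebra $(a,b)_K$ whose class governs $\Psi$ — one needs that the "excess" of $\Psi$ over its generic splitting conic is controlled, which is where the classical results of \cite{Dejaiffe} and \cite{PSS} (the references behind \eqref{T:DPSS}) do the real work. Concretely, I expect the proof to either (i) cite those papers directly for the quaternion-algebra-with-orthogonal-involution case and then bootstrap to the general $\Psi\sim(a\qil b)_K$ case using \eqref{P:JAC} and the multiplicativity of the trace form \eqref{P:itr-prod}, \eqref{P:trace-qf-square}, together with \eqref{T:JAC}; or (ii) reprove the descent via a residue/specialization argument on the conic combined with \eqref{quExt} to handle the quadratic subextensions $K(\sqrt a)$, $K(\sqrt b)$ through which $L$ is built up. Either way, the quadratic form input \eqref{quExt} and the Bayer-Fluckiger--Shapiro--Tignol characterization of hyperbolicity over $K(\sqrt a)$ are the tools that let one control $\vf$, while the conic-specific descent is the part that cannot be made elementary.
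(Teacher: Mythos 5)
Your proposal takes essentially the same route as the paper: the proof there splits into the case where the conic has a rational point (so $L$ is a rational function field over $K$ and descent of hyperbolicity is immediate) and the case where $(a,b)_K$ is division, where $\Psi$ is adjoint to a skew-hermitian form over $(a\qilr b)_K$ and the statement is quoted directly from Dejaiffe and from Parimala--Sridharan--Suresh --- your option (i), with no bootstrapping needed since those results already cover arbitrary skew-hermitian forms, i.e.\ arbitrary $\Psi\sim(a\qil b)_K$. Only your split-case detour through $K(\sqrt a)$ is off target: what is actually used there is just that anisotropic forms stay anisotropic under purely transcendental extensions.
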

\begin{proof}
If the conic $aX^2+bY^2=1$ is split over $K$, then $L$ is a rational function field over $K$ and the statement is obvious.
Otherwise $\Phi=(a\qilr b)_K$ is a $K$-quaternion division algebra with involution and $\Psi$ is adjoint to a skew-hermitian form over $\Phi$, in which case the statement follows alternatively from \cite{Dej} or \cite[(3.3)]{PSS}.
\end{proof}

%%%%%%%%%%%%%%%%%%%%%%%%%%%%%%%%%%%%%%%%%%%%%%%%%%%%%%%%%%%%%%
\section{Algebras with involution over real closed fields} %%%
%%%%%%%%%%%%%%%%%%%%%%%%%%%%%%%%%%%%%%%%%%%%%%%%%%%%%%%%%%%%%%
\label{sec7}

Let $\Psi$ be a $K$-algebra with involution.
For $n\geq 1$ we set $n\times \Psi=\Ad(n\times \la 1\ra)\ox\Psi$.

\begin{prop}\label{T:pyth-hamilton-orth}
Assume  that $K$ is pythagorean and that $\Psi$ is orthogonal and such that $\underline{\Psi}\sim  (-1,-1)_K$. Then $\Psi \simeq \Ad(\vf) \ox (-1 \qil -1)_K$ for  a form $\vf$ over $K$.
Moreover, $2\times \Psi$ is hyperbolic.
\end{prop}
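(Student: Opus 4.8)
The plan is to treat the two assertions separately, using the structural results on quaternion algebras with involution from Section~\ref{sec4} together with the behaviour of hyperbolicity under the adjoint-tensor construction. Since $\Psi\sim(-1\qil-1)_K$, the algebra $\Psi$ has the same type and Brauer class as $(-1\qil-1)_K$, hence it is a $K$-algebra with orthogonal involution adjoint to a skew-hermitian form over the $K$-quaternion algebra with canonical involution $\Phi_0=(-1\qilr-1)_K$ (the Hamilton-type quaternion algebra over $K$). By \eqref{P:JAC}, to realise $\Psi$ in the desired form $\Ad(\vf)\ox(-1\qil-1)_K$ I would instead write $\Psi\simeq\Phi\ox\Ad(\vf)$ for a $K$-algebra with canonical involution $\Phi$ — but this requires $\Psi$ to be split or symplectic of index $2$, which is \emph{not} our situation. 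So the correct route is the hermitian-form description: $\Psi$ is adjoint to a hermitian (equivalently, after a twist, skew-hermitian) form over $(-1\qil-1)_K$, and the key point is that over a \emph{pythagorean} field such a form has a diagonalisation whose entries can all be taken to lie in $\mg K$, in fact one may normalise each entry to $\pm1$, and since $-1$ is a sum of squares only if $K$ is nonreal — which a pythagorean real field is not, while a nonreal pythagorean field has $-1\in\sq K$ — one controls the signs. This diagonalisation then gives exactly $\Psi\simeq\Ad(\vf)\ox(-1\qil-1)_K$ for a suitable $\vf=\la a_1,\dots,a_r\ra$ over $K$.

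For the moreover-part, the plan is: once $\Psi\simeq\Ad(\vf)\ox(-1\qil-1)_K$, we have $2\times\Psi\simeq\Ad(2\times\la1\ra)\ox\Ad(\vf)\ox(-1\qil-1)_K\simeq\Ad(\la1,1\ra\ox\vf)\ox(-1\qil-1)_K$ by \eqref{P:ad-mult}. Now apply Jacobson's theorem \eqref{T:JAC} with $\Phi=(-1\qil-1)_K$: the algebra $\Ad(\la1,1\ra\ox\vf)\ox(-1\qil-1)_K$ is hyperbolic if and only if $\la1,1\ra\ox\vf\ox\itr((-1\qil-1)_K)$ is hyperbolic. By \eqref{E:itr} we have $\itr((-1\qil-1)_K)=2\lla-1,1\rra=2\cdot\la1,1\ra\ox\la1,-1\ra$, which is hyperbolic as a quadratic form (it contains the hyperbolic plane $\la1,-1\ra$). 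Hence $\la1,1\ra\ox\vf\ox\itr((-1\qil-1)_K)$ is hyperbolic, and therefore $2\times\Psi$ is hyperbolic.

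The main obstacle I expect is the first part: producing the diagonalisation $\Psi\simeq\Ad(\vf)\ox(-1\qil-1)_K$ with $\vf$ a form \emph{over $K$} rather than over the quaternion algebra. The subtlety is that a hermitian form over the quaternion $K$-algebra with involution $(-1\qil-1)_K$ a priori only diagonalises with entries in $\Sym((-1\qil-1)_K)$, which for an \emph{orthogonal} involution on a quaternion algebra is a $3$-dimensional space, not just $K$; one must use that every such entry is similar (after scaling by a reduced norm, which over a pythagorean field lies in $\sq K$ up to the relevant represented set) to an element of $\mg K$ — this is where pythagoreanness enters essentially, via the fact that $\D_K\lla-1\rra=\sums K=\sq K$, so that the scaling factors are squares and can be absorbed. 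Concretely I would invoke \eqref{P:-qil-characterisation}: since $bd\in\D_K\lla-1\rra=\sq K$ is automatic for $b,d>0$ in a pythagorean field, each binary block is controlled, and a sign analysis reduces every diagonal entry to $+1$ or $-1$, yielding $\vf$ over $K$. Everything else is a routine assembly of the cited results.
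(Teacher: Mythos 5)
Your first part follows essentially the paper's route: realise $\Psi$ as adjoint to a skew-hermitian form over $(-1\qilr -1)_K$, diagonalise, and use pythagoreanness to bring the diagonal entries into $i\mg{K}$ for a fixed pure quaternion $i$ with $i^2=-1$, so that twisting by $i$ gives a hermitian form for $\Int(i)\circ\can_Q$ with entries in $\mg{K}$. The point you leave vague is the actual mechanism: an invertible pure quaternion $s$ satisfies $s^2=-(x^2+y^2+z^2)\in -\sq{K}$ by pythagoreanness, so $K(s)\simeq K(\sqrt{-1})$ and $s$ is conjugate to an element of $i\mg{K}$; \eqref{P:-qil-characterisation} only controls the rank-one case and does not diagonalise a higher-rank form. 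Your further claim that the entries can be normalised to $\pm 1$ is false for a general pythagorean field (it would force $\mg{K}/\sq{K}=\{\pm 1\}$), but you do not need it, since the statement only asks for a form $\vf$ over $K$.

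The second part contains a genuine error. Jacobson's theorem \eqref{T:JAC} applies only to a $K$-algebra with \emph{canonical} involution, and $(-1\qil -1)_K$ is orthogonal, hence not canonical (the canonical involution on a quaternion algebra is the symplectic one, $(-1\qilr -1)_K$). The misapplication is not cosmetic: since $\itr\,(-1\qil -1)_K=2\lla -1,1\rra$ is already hyperbolic, your argument would show that $\Ad(\vf)\ox(-1\qil -1)_K$ is hyperbolic for \emph{every} $\vf$, contradicting \eqref{P:rcai}$(b)$, by which $r\times(-1\qil -1)_K$ over a real closed field is hyperbolic only for $r$ even. To repair the step, argue as the paper does: $2\times(-1\qil -1)_K\simeq(-1\qil 1)_K\ox(-1\qil -1)_K\simeq(-1\qilr -1)_K\ox(-1\qilr 1)_K$ by \eqref{P:flipflop}, and the factor $(-1\qilr 1)_K$ contains a skew element of square $1$, so the product is hyperbolic by \eqref{P:hyper-char}; alternatively, note $-1\in\G\,(-1\qil -1)_K$ and invoke \eqref{P:simhyp}. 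Then tensor with $\Ad(\vf)$ and conclude by \eqref{P:hypermult}.
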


\begin{proof}
Let $Q=(-1,-1)_K$.
We may identify $\Psi$ with $(\End_Q(V),\s)$ where $V$ is a finite-dimensional right $Q$-vector space and $\s$ is the involution adjoint to a regular skew-hermitian form $h:V\x V\lra Q$ with respect to $\can_Q$.
Since $K$ is pythagorean, any maximal subfield of $Q$ is $K$-isomorphic to $K(\sqrt{-1})$.
We fix a pure quaternion $i\in Q$ with $i^2=-1$ and obtain that any invertible pure quaternion in $Q$ is conjugate to an element of $i\mg{K}$.
This yields that $h$ has a diagonalization with entries in $i\mg{K}$.
It follows that $i h:V\x V\lra Q$ is a hermitian form with respect to the involution $\tau=\Int(i)\circ \can_Q$ and has a diagonalization with entries in $\mg{K}$.
This yields that $\Psi\simeq \Ad(\vf)\ox (Q,\tau)$ for a form $\vf$ over $K$.
Moreover, $(Q,\tau)\simeq (-1\qil -1)_K$.
This shows the first claim.

As $\Ad\la 1,1\ra\simeq (-1\qil 1)_K$ we obtain using \Cref{P:flipflop} that 
$$2\times (Q,\tau)\simeq (-1\qil 1)_K\otimes (-1\qil -1)_K\simeq (-1\qilr {-1})_K\otimes (-1\qilr 1)_K\,.$$
By \Cref{P:hyper-char} this $K$-algebra with involution is hyperbolic, and thus so is $2\times \Psi$.
\end{proof}

\begin{thm}
\label{P:rcai}
Assume that $K$ is real closed.
\begin{enumerate}[$(a)$]
\item If $\Psi$ is split orthogonal, then $\Psi \simeq \Ad(r\times\la 1\ra\perp\eta)$ for a hyperbolic form $\eta$ over $K$ and  $r\in\nat$ such that $\sign(\itr(\Psi))=r^2$.
\item 
If $\Psi$ is non-split orthogonal, then $\Psi\simeq r\times (-1\qil -1)_K$ for a positive integer $r$, the form $\itr(\Psi)$ is hyperbolic, and $\Psi$ is hyperbolic if and only if $r$ is even.
\item If $\Psi$ is split symplectic, then $\Psi\simeq r\times (1\qilr 1)_K$ for a positive integer $r$, and both $\Psi$ and $\itr(\Psi)$ are hyperbolic.
\item If $\Psi$ is non-split symplectic, then $\Psi \simeq \Ad(r\times\la 1\ra\perp\eta)\otimes (-1\qilr -1)_K$ for a hyperbolic form $\eta$ over $K$ and  $r\in\nat$ such that $\sign(\itr(\Psi))=4r^2$.
\item If $\Psi$ is  
unitary and $\underline{\Psi}$ is simple, then  $\Psi \simeq \Ad(r\times\la 1\ra\perp\eta)\otimes (-1)_K$ for a hyperbolic form $\eta$ over $K$ and  $r\in\nat$ such that $\sign(\itr(\Psi))=2r^2$.
\item If $\Psi$ is unitary of inner type, then $\Psi\simeq r\times (1)_K$ for a positive integer, and both $\Psi$ and $\itr(\Psi)$ are hyperbolic.
\end{enumerate}
These cases are mutually exclusive and cover all possibilities, and the integer $r$ is unique in each case.
\end{thm}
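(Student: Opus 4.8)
The plan is to go through the six cases by classifying algebras with involution over a real closed field $K$ via the underlying division algebra with involution and the associated (skew-)hermitian form, using that over a real closed field the only division algebras are $K$ itself and the quaternion algebra $(-1,-1)_K$, and that quadratic (and hence hermitian) forms are classified by signature. First I would handle the split orthogonal case $(a)$: here $\Psi\simeq\Ad(\vf)$ for a form $\vf$ over $K$, and since $K$ is real closed, $\vf$ is determined up to isometry by its signature $s=\sign(\vf)$; writing $\vf\simeq |s|\times\la\varepsilon\ra\perp\eta$ with $\eta$ hyperbolic and scaling (which does not change $\Ad(\vf)$) gives $\Psi\simeq\Ad(r\times\la 1\ra\perp\eta)$ with $r=|s|$, and then $\itr(\Psi)\simeq\vf\ox\vf$ by \eqref{P:trace-qf-square}, whose signature is $s^2=r^2$. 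The non-split orthogonal case $(b)$ is where the real work sits: by Wedderburn and the structure of division algebras over $K$, $\Psi$ is adjoint to a regular skew-hermitian form $h$ over $(Q,\can_Q)$ with $Q=(-1,-1)_K$; invoking \eqref{T:pyth-hamilton-orth} (a real closed field is pythagorean) we get $\Psi\simeq\Ad(\vf)\ox(-1\qil-1)_K$ for some form $\vf$ over $K$, hence $\Psi\simeq\Ad(r\times\la 1\ra\perp\eta)\ox(-1\qil-1)_K$ as in case $(a)$; combining $\Ad(r\times\la 1\ra)\simeq\Ad((r-1)\times\la 1\ra\perp\la 1\ra)$ with the hyperbolicity of $2\times(-1\qil-1)_K$ from \eqref{T:pyth-hamilton-orth} and \eqref{P:hypermult} lets me peel off pairs, so $\Psi\simeq r\times(-1\qil-1)_K$ up to adding hyperbolic summands, which by \eqref{P:hyper-iso} are absorbed, and $\Psi$ is hyperbolic iff $r$ is even; hyperbolicity of $\itr(\Psi)$ follows from $\itr((-1\qil-1)_K)=2\lla-1,1\rra$ (Example \eqref{E:itr}), which is hyperbolic, together with \eqref{P:itr-prod}.

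Next I would treat the split symplectic case $(c)$: by \eqref{P:JAC}, $\Psi\simeq\Phi\ox\Ad(\vf)$ with $\Phi$ a split quaternion algebra with canonical involution, i.e. $\Phi\simeq(1\qilr 1)_K$, and over $K$ real closed $\Ad(\vf)$ is a sum of $\Ad\la 1\ra$'s and hyperbolic pieces; since $(1\qilr 1)_K$ is already hyperbolic by \eqref{P:hyper-char} (it contains $(1)_K$), so is $\Psi$, and by \eqref{P:hyper-iso} it is determined by its degree $2r$, giving $\Psi\simeq r\times(1\qilr 1)_K$; hyperbolicity of $\itr(\Psi)$ follows from \eqref{C:hyper-trace}. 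For the non-split symplectic case $(d)$: $\Psi$ is adjoint to a hermitian or skew-hermitian form over $(Q,\tau)$ with $Q=(-1,-1)_K$; using \eqref{P:JAC} again $\Psi\simeq\Phi\ox\Ad(\vf)$ with $\Phi$ symplectic of index $2$, so $\Phi\simeq(-1\qilr-1)_K$ by \eqref{C:quat-awi-class} (as $(-1,-1)_K$ is the unique nonsplit quaternion algebra over $K$), and as in case $(a)$ we reduce $\vf$ to $r\times\la 1\ra\perp\eta$; then $\itr(\Psi)\simeq\itr(\Phi)\ox(\vf\ox\vf)$ via \eqref{P:itr-prod} and \eqref{P:trace-qf-square}, and $\itr(\Phi)=2\lla-1,-1\rra$ has signature $4$, so $\sign(\itr(\Psi))=4r^2$. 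The unitary cases $(e)$ and $(f)$ are analogous: write $\Psi\simeq\Phi\ox(c)_K$ with $\Phi$ of the first kind (from \cite[(2.22)]{BOI} as in \eqref{C:quat-awi-class}), and over $K$ real closed the only quadratic extension is $K(\sqrt{-1})$, so $(c)_K\simeq(-1)_K$ (non-degenerate) in case $(e)$ — and in case $(f)$, $(c)_K$ degenerate forces $c\in\sq K$ and $\Psi$ is split unitary, hence hyperbolic by \eqref{C:trivial-hyper}, and determined by degree via \eqref{P:hyper-iso}; the signature computation $\sign(\itr(\Psi))=2r^2$ in $(e)$ uses $\itr((-1)_K)=\lla-1\rra$, which has signature $2$, together with the factor-of-$2$ in the unitary multiplicativity clause of \eqref{P:itr-prod}.

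Finally I would check that the six cases are mutually exclusive and exhaustive — this is immediate from the trichotomy of kinds/types, the split-versus-nonsplit dichotomy, and the fact that over a real closed field ``index'' is either $1$ or $2$ — and that $r$ is uniquely determined in each case, which follows because $r$ is recovered from an invariant: $\sign(\itr(\Psi))$ equals $r^2$, $4r^2$, or $2r^2$ in the split-orthogonal, non-split symplectic, and non-degenerate unitary cases, while $2r=\deg(\Psi)$ in the non-split orthogonal, split symplectic, and degenerate unitary cases (noting $\itr(\Psi)$ is hyperbolic there so carries no signature information). The main obstacle is case $(b)$: getting the clean normal form $r\times(-1\qil-1)_K$ rather than something like $\Ad(\psi)\ox(-1\qil-1)_K$ requires the flip-flop identities of \eqref{P:flipflop} packaged in \eqref{T:pyth-hamilton-orth} to show that pairs of copies become hyperbolic and can be discarded via \eqref{P:hyper-iso}; the bookkeeping of how a hyperbolic form $\eta$ over $K$ interacts with $\ox(-1\qil-1)_K$, and verifying that the resulting hyperbolic algebra-with-involution summand can genuinely be absorbed, is the one place where I would need to be careful rather than formulaic.
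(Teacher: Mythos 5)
Your proposal is correct and follows essentially the same route as the paper: reduce to the short list of division algebras over a real closed field, use \eqref{P:JAC} (resp.\ \eqref{T:pyth-hamilton-orth}) to write $\Psi\simeq\Ad(\vf)\ox\Phi$ with $\Phi$ a canonical-involution algebra, normalize $\vf$ using real-closedness, and compute $\itr$ via \eqref{P:itr-prod} and \eqref{P:trace-qf-square}, with \eqref{C:trivial-hyper} and \eqref{P:hyper-iso} disposing of cases $(c)$ and $(f)$. The only real divergence is in case $(b)$, where the paper sidesteps your pair-peeling/cancellation step by observing that $\G\,(-1\qil-1)_K=\sq{K}\cup-\sq{K}=\mg{K}$, so every diagonal entry of $\vf$ can be normalized to $1$ at once, giving $\Psi\simeq r\times(-1\qil-1)_K$ directly.
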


\begin{proof} 
It is clear that exactly one of the conditions in $(a)$--$(f)$ is satisfied.
As $K$ is real closed, the only finite-dimensional $K$-division algebras are $K$, $K(\sqrt{-1})$, and $(-1,-1)_K$.
We set
$$\Phi = \begin{cases} 
(K,\id_K) & \text{if $\Psi$ is split orthogonal,}\\
(-1\qil -1)_K & \text{if $\Psi$ is non-split orthogonal,}\\
(1 \qilr 1)_K & \text{if $\Psi$ is split symplectic,}\\
(-1\qilr-1)_K & \text{if $\Psi$ is non-split symplectic,}\\
(-1)_K  & \text{if $\Psi$ is  unitary and $\underline{\Psi}$ is simple,}\\
(1)_K  & \text{if $\Psi$ is unitary of inner type.} 
\end{cases}$$
Note that $\type(\Phi)=\type(\Psi)$ and either $\Phi$ and $\Psi$ are both 
unitary of inner type or the algebras 
 $\underline{\Phi}$ and $\underline{\Psi}$ are both simple with the same centre and Brauer equivalent.

Suppose that $\Psi$ is split-symplectic or  
unitary of inner type. Then
$\Psi$ is hyperbolic by \Cref{C:trivial-hyper}.
By \Cref{C:hyper-trace} it follows that $\itr(\Psi)$ is hyperbolic and that $\Psi\simeq r\times\Phi$ for some $r\in\nat$. This shows $(c)$ and $(f)$.

Next, suppose that $\Psi$ is non-split orthogonal.
Then by \Cref{T:pyth-hamilton-orth} we have that $\Psi\simeq \Ad(\vf)\otimes \Phi$ for a form $\vf$ over $K$, and as $\G\,(-1\qil -1)_K=\sq{K}\cup-\sq{K}=\mg{K}$ we may choose $\varphi$ to be $r\times\la 1\ra$ for some $r\in\nat$.
 We thus have 
$\Psi\simeq r\times \Phi$ with  $r\in\nat$ such that $\deg(\Psi)=2r$.
With this follows from \Cref{T:pyth-hamilton-orth} that $\Psi$ is hyperbolic if and only if $r$ is even.
Furthermore,  $\itr(\Phi)$ is hyperbolic by \Cref{E:itr}, and thus so is $\itr(\Psi)\simeq r^2\x \itr(\Phi)$.
This shows $(b)$.

In each of the remaining cases $(a)$, $(d)$, and $(e)$,  by \Cref{P:JAC} we have that $\Psi\simeq  \Ad(\vf)\ox \Phi$ for a form $\vf$ over $K$. Since $K$ is real closed and $\vf$ is determined up to a scalar factor, we choose $\vf$ to be $r\times \la 1\ra\perp\eta$ for some $r\in\nat$ and  a hyperbolic form $\eta$ over $K$.
It further follows that $\itr(\Psi)\simeq \vf\ox\vf\ox \itr(\Phi)$
by \Cref{P:itr-prod} and \Cref{P:trace-qf-square}
 and thus $\sign(\itr(\Psi))= r^2\cdot\sign(\itr(\Phi))$.
As in either case $\itr(\Phi)$ is  positive definite by \Cref{E:itr}, we have that 
$\sign(\itr(\Phi)) = \dim_K(\Phi)$.
This establishes the cases $(a)$, $(d)$, and $(e)$.

Finally, note that in each case the non-negative integer $r$ is determined by $\deg(\Psi)$ or by $\dim(\itr(\Psi))$, respectively.
\end{proof}

\begin{cor}
\label{C:rc}
Assume   $K$ is real closed. Then $\itr(\Psi)$ is hyperbolic if and only if $2\x \Psi$ is hyperbolic, if and only if either $\Psi$ is hyperbolic or $\Psi \simeq r\x (-1 \qil -1)_K$ where $r\in\nat$ is odd.
\end{cor}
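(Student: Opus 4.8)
The plan is to use the classification in \eqref{P:rcai} and go through each of the six cases, reading off from it what $\itr(\Psi)$, $2\times\Psi$, and the hyperbolicity of $\Psi$ look like. First I would recall the easy implications: if $\Psi$ is hyperbolic then $\itr(\Psi)$ is hyperbolic by \eqref{C:hyper-trace} and $2\times\Psi=\Ad(2\times\la1\ra)\ox\Psi$ is hyperbolic by \eqref{P:hypermult}; and if $\Psi\simeq r\times(-1\qil-1)_K$ with $r$ odd, then $\itr(\Psi)$ is hyperbolic by case $(b)$ of \eqref{P:rcai}, while $2\times\Psi\simeq 2r\times(-1\qil-1)_K$ is hyperbolic again by case $(b)$ since $2r$ is even. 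So the third condition implies the first two in all cases, and the first obviously implies the second.

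Next I would prove that the second condition implies the third, which is the substance. Suppose $2\times\Psi$ is hyperbolic. Run through the classification: in cases $(c)$ (split symplectic) and $(f)$ (degenerate unitary), $\Psi$ is already hyperbolic, so there is nothing to do. In case $(b)$, $\Psi\simeq r\times(-1\qil-1)_K$, and by that same case $\Psi$ is hyperbolic if $r$ is even and not hyperbolic if $r$ is odd; but if $r$ is even we are in the "$\Psi$ hyperbolic" alternative and if $r$ is odd we are in the second alternative, so the conclusion holds regardless (and here the hypothesis $2\times\Psi$ hyperbolic is automatic). The remaining cases are $(a)$, $(d)$, $(e)$, where $\Psi\simeq\Ad(r\times\la1\ra\perp\eta)\ox\Phi$ with $\eta$ hyperbolic and $\Phi$ one of $(K,\id_K)$, $(-1\qilr-1)_K$, $(-1)_K$. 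Here I need to show that $2\times\Psi$ hyperbolic forces $\Psi$ hyperbolic.

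For those three cases the key computation is that $2\times\Psi\simeq\Ad(2\times(r\times\la1\ra\perp\eta))\ox\Phi\simeq\Ad(2r\times\la1\ra\perp\eta')\ox\Phi$ for a hyperbolic $\eta'$; and since $\sign(\itr(2\times\Psi))=2\cdot\sign(\itr(\Psi))$ equals $c\cdot(2r)^2/\,?$... more cleanly: by \eqref{C:hyper-trace} if $2\times\Psi$ is hyperbolic then $\itr(2\times\Psi)$ is hyperbolic, so $\sign(\itr(2\times\Psi))=0$; but by the formulas in \eqref{P:rcai} (cases $a,d,e$, applied to $2\times\Psi$, whose defining integer is $2r$) we get $\sign(\itr(2\times\Psi))=c\cdot(2r)^2$ with $c\in\{1,4,2\}$ a positive constant, forcing $r=0$; and $r=0$ means $\Psi\simeq\Ad(\eta)\ox\Phi$ with $\eta$ hyperbolic, hence $\Ad(\eta)$ hyperbolic, hence $\Psi$ hyperbolic by \eqref{P:hypermult}. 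I would phrase this uniformly: in cases $(a)$, $(d)$, $(e)$ the integer $r$ attached to $\Psi$ is $0$ if and only if $\Psi$ is hyperbolic, and $\sign(\itr(\Psi))$ is a positive multiple of $r^2$, so $2\times\Psi$ hyperbolic $\Rightarrow$ $\itr(2\times\Psi)$ hyperbolic $\Rightarrow$ its signature is $0$ $\Rightarrow$ $r=0$ $\Rightarrow$ $\Psi$ hyperbolic. The main obstacle is just the bookkeeping: making sure $2\times\Psi$ falls into the same case of \eqref{P:rcai} as $\Psi$ (it does, since $\type$ and split/non-split status are unchanged by tensoring with $\Ad(2\times\la1\ra)$) and that its associated integer is $2r$ with $r$ the integer of $\Psi$ — this follows because that integer is read off from $\dim\itr$ or $\deg$, both of which double.
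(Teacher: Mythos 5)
Your overall strategy---running the three conditions through the six cases of \eqref{P:rcai} and reducing everything to whether the integer $r$ vanishes---is the same as the paper's proof, and your treatment of the implication from ``$2\times\Psi$ hyperbolic'' to the third condition is correct and in fact more explicit than the paper's. There is, however, one genuine slip: the sentence ``the first obviously implies the second.'' The implication ($\itr(\Psi)$ hyperbolic $\Rightarrow$ $2\times\Psi$ hyperbolic) is not obvious; it is not a formal consequence of the definitions and fails over general fields (if $W\!K$ contains a class $[\vf]$ with $[\vf]^2=0$ but $2[\vf]\neq 0$, then $\Psi=\Ad(\vf)$ has $\itr(\Psi)\simeq\vf\ox\vf$ hyperbolic while $2\times\Psi\simeq\Ad(2\times\vf)$ is not). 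Indeed, this is exactly the kind of statement the corollary is asserting as a special feature of real closed fields. Since your logical scheme is $(iii)\Rightarrow(i)$, $(iii)\Rightarrow(ii)$, $(ii)\Rightarrow(iii)$, plus this ``obvious'' $(i)\Rightarrow(ii)$, removing the last step leaves condition $(i)$ connected to the others in only one direction, so the equivalence is not closed.

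The repair is already contained in your own case analysis, so the gap is small. In cases $(b)$, $(c)$, $(f)$ of \eqref{P:rcai}, condition $(ii)$ holds unconditionally, so there is nothing to prove. In cases $(a)$, $(d)$, $(e)$, your observation that $\sign(\itr(\Psi))$ is a positive multiple of $r^2$ (over a real closed field, where hyperbolicity of a form is equivalent to vanishing signature) gives directly: $\itr(\Psi)$ hyperbolic $\Rightarrow$ $r=0$ $\Rightarrow$ $\Psi$ hyperbolic $\Rightarrow$ $2\times\Psi$ hyperbolic. Replace ``obviously'' by this one line and the proof is complete and matches the paper's.
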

\begin{proof}
%As $K$ is real closed, $T_\s$ is hyperbolic if and only if $\sign(T_\s)=0$.
We shall refer to the cases in \Cref{P:rcai}.
In each of the cases $(b)$, $(c)$, or $(f)$, both $\itr(\Psi)$ and  $2\x \Psi$ are hyperbolic.
Assume  that we are in one of the cases $(a)$, $(d)$, or $(e)$, and let $r$ be the integer occurring in the statement  for that case.
Then $\itr(\Psi)$ is hyperbolic if and only if $r=0$,  if and only if $\Psi$ is hyperbolic.
 \end{proof}

\begin{cor}\label{C:sign-def}
Let $P$ be an ordering of $K$ and $\Psi$ a $K$-algebra with involution. 
Then $\sign_P(\itr(\Psi))=[\Z(\underline{\Psi}):K]\cdot s^2$ for some $s\in\nat$.
\end{cor}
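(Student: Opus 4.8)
The plan is to reduce the statement over the given ordering $P$ to the situation of a real closed base field, where Theorem~\eqref{P:rcai} applies directly. First I would pass to a real closure $K_P$ of $K$ at $P$, that is, an algebraic extension of $K$ whose ordering restricts to $P$ and which is real closed. Since $\itr(\Psi_{K_P}) \simeq \itr(\Psi)_{K_P}$ (the trace form behaves well under scalar extension, as $\Trd$ commutes with base change) and since signatures are computed after extension to the real closure — that is, $\sign_P(\varrho) = \sign_{K_P}(\varrho_{K_P})$ for any quadratic form $\varrho$ over $K$ — it suffices to prove the corresponding statement for the $K_P$-algebra with involution $\Psi_{K_P}$ over the real closed field $K_P$, namely that $\sign(\itr(\Psi_{K_P})) = [Z(\Psi_{K_P}):K_P]\cdot s^2$ for some $s\in\nat$.

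The second step is a brief case analysis according to the six cases of \eqref{P:rcai} applied to $\Psi_{K_P}$, keeping track of $[Z(\Psi_{K_P}):K_P]$, which equals $1$ in the orthogonal and symplectic cases and $2$ in the unitary cases. In cases $(a)$ and $(d)$ we get $\sign(\itr(\Psi_{K_P})) = r^2$ and $4r^2 = (2r)^2$ respectively, both perfect squares, matching $[Z:K_P]=1$. In case $(e)$ we get $\sign(\itr(\Psi_{K_P})) = 2r^2$, matching $[Z:K_P]=2$ with $s=r$. In the hyperbolic cases $(b)$, $(c)$, $(f)$ the trace form is hyperbolic, so $\sign(\itr(\Psi_{K_P})) = 0 = [Z:K_P]\cdot 0^2$, again of the required form. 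So in every case $\sign_P(\itr(\Psi)) = [Z(\Psi_{K_P}):K_P]\cdot s^2$ for a suitable $s \in \nat$.

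The remaining point is a small compatibility check: that $[Z(\Psi_{K_P}):K_P] = [Z(\Psi):K]$. This holds because $\type(\Psi_{K_P}) = \type(\Psi)$ (noted in the text after the definition of $\Psi_L$), and the degree of the centre over the base field is $1$ in the orthogonal/symplectic cases and $2$ in the unitary cases, determined entirely by the type; equivalently, $Z(\Psi_{K_P}) = Z(\Psi)\ox_K K_P$, which has dimension $[Z(\Psi):K]$ over $K_P$. I do not expect any serious obstacle here; the only thing to be slightly careful about is the degenerate unitary case, where $Z(\Psi)$ is not a field but still $[Z(\Psi):K]=2$ and the trace form is hyperbolic by \eqref{C:rc} (or directly by \eqref{C:trivial-hyper} and \eqref{C:hyper-trace}), so the formula holds with $s=0$. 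Thus the main — and essentially only — work is the reduction to the real closed case via passage to a real closure and the invariance of signatures and trace forms under that extension; the case bookkeeping afterwards is routine.
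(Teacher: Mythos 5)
Your proposal is correct and follows exactly the paper's own (very brief) argument: reduce to the real closure of $K$ at $P$ and read off the six cases of \eqref{P:rcai}. The extra care you take with the compatibility $[Z(\Psi_{K_P}):K_P]=[Z(\Psi):K]$ and with a nondegenerate unitary $\Psi$ possibly becoming degenerate over $K_P$ is sound and merely fills in details the paper leaves implicit.
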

\begin{proof}
By \Cref{P:rcai} the statement holds in the case where $K$ is real closed and $P$ is the unique ordering of $K$.
The general case follows immediately by extending scalars to the real closure of $K$ at $P$.
\end{proof}

Let $P$ be an ordering of $K$.
The integer $s$ occurring in \Cref{C:sign-def} is called the
\emph{signature of $\Psi$ at $P$} and denoted $\sign_P(\Psi)$. 
With $k=[\Z(\underline{\Psi}):K]$ we thus have 
$$\sign_P(\Psi) =\sqrt{\mbox{$\frac{1}{k}$}\sign_P (\itr(\Psi))}\,.$$

\begin{prop}\label{P:sign-mult}
Let $\Psi$ and $\Theta$ be two $K$-algebras with involution.
If $\Psi$ and $\Theta$ are both unitary, assume that they have the same centre.
For every ordering $P$ of $K$ we have that $\sign_P(\Psi\ox\Theta)=\sign_P(\Psi)\cdot\sign_P(\Theta)$.
\end{prop}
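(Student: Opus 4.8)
The plan is to reduce the statement to the multiplicativity of the involution trace form established in \eqref{P:itr-prod} together with the fact that $\sign_P\colon W\!K\too\zz$ is a ring homomorphism, so that $\sign_P(\vf\ox\psi)=\sign_P(\vf)\,\sign_P(\psi)$ and $\sign_P(m\times\vf)=m\,\sign_P(\vf)$ for quadratic forms $\vf,\psi$ over $K$ and $m\in\nat$. Throughout I would work with squares: by definition $\sign_P(\Psi)^2=\frac1k\sign_P(\itr(\Psi))$ with $k=[Z(\Psi):K]$, and since $\sign_P(\itr(\Psi))\ge 0$ by \eqref{C:sign-def}, both $\sign_P(\Psi\ox\Theta)$ and $\sign_P(\Psi)\,\sign_P(\Theta)$ are non-negative, so it is enough to show that their squares coincide.

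The first step is to sort out the centres. An algebra with involution is of the first kind precisely when it is not unitary, so there are exactly two cases. If at least one of $\Psi$ and $\Theta$ is of the first kind, then, using $\Psi\ox\Theta\simeq\Theta\ox\Psi$, I may assume $\Psi$ is of the first kind; then $Z(\Psi)=K$ and $Z(\Psi\ox\Theta)=Z(\Theta)$, whence $k_\Psi=1$ and $k_{\Psi\ox\Theta}=k_\Theta$. If instead both $\Psi$ and $\Theta$ are unitary, then by hypothesis they share a centre $L$, and $Z(\Psi\ox\Theta)=L$ as well, so $k_\Psi=k_\Theta=k_{\Psi\ox\Theta}=2$.

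In the first case \eqref{P:itr-prod} gives $\itr(\Psi\ox\Theta)\simeq\itr(\Psi)\ox\itr(\Theta)$, and since $k_{\Psi\ox\Theta}=k_\Theta$ and $k_\Psi=1$,
\[\sign_P(\Psi\ox\Theta)^2=\tfrac{1}{k_\Theta}\sign_P\bigl(\itr(\Psi)\bigr)\,\sign_P\bigl(\itr(\Theta)\bigr)=\sign_P(\Psi)^2\cdot\tfrac{1}{k_\Theta}\sign_P\bigl(\itr(\Theta)\bigr)=\sign_P(\Psi)^2\,\sign_P(\Theta)^2.\]
In the second case \eqref{P:itr-prod} gives $2\times\itr(\Psi\ox\Theta)\simeq\itr(\Psi)\ox\itr(\Theta)$, hence $2\,\sign_P(\itr(\Psi\ox\Theta))=\sign_P(\itr(\Psi))\,\sign_P(\itr(\Theta))$ and, using $k_\Psi=k_\Theta=2$,
\[\sign_P(\Psi\ox\Theta)^2=\tfrac12\sign_P\bigl(\itr(\Psi\ox\Theta)\bigr)=\bigl(\tfrac12\sign_P(\itr(\Psi))\bigr)\bigl(\tfrac12\sign_P(\itr(\Theta))\bigr)=\sign_P(\Psi)^2\,\sign_P(\Theta)^2.\]
Taking non-negative square roots finishes both cases.

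The whole argument is routine bookkeeping; the only point that needs attention — which I would regard as the main obstacle — is making the centre degrees match the normalising constant $k$ in the definition of $\sign_P$. One has to notice that when exactly one factor is of the first kind the tensor product is unitary with $k=2$, so that the extra $\tfrac12$ in the definition of $\sign_P(\Psi\ox\Theta)$ is exactly cancelled by the $\tfrac12$ carried by the unitary factor, and that the degenerate unitary sub-case causes no trouble, as \eqref{P:itr-prod} places no restriction on $\Theta$ and such factors have hyperbolic trace form and hence zero signature.
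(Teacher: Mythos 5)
Your proof is correct and takes essentially the same route as the paper, whose entire proof is the single line ``This follows immediately from \eqref{P:itr-prod}''. Your case analysis on the centres and the matching of the normalising constants $k=[Z(\cdot):K]$ is precisely the bookkeeping the paper leaves implicit, and it checks out in both cases (including the degenerate unitary one).
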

\begin{proof}
This follows immediately from \Cref{P:itr-prod}.
\end{proof}

\begin{prop}
Let $\vf$ be a quadratic form over $K$. For every ordering $P$ of $K$ we have that $\sign_P (\Ad(\vf)) = | \sign_P (\vf)|$.
\end{prop}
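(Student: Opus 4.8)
The claim is that for a quadratic form $\vf$ over $K$ and any ordering $P$, one has $\sign_P(\Ad(\vf)) = |\sign_P(\vf)|$. The plan is to reduce to the real closed case and then compare with the explicit description in \eqref{P:rcai}(a). First I would recall that by \eqref{P:trace-qf-square} we have $\itr(\Ad(\vf)) \simeq \vf\ox\vf$, so that $\sign_P(\itr(\Ad(\vf))) = \sign_P(\vf\ox\vf) = \sign_P(\vf)^2$; here I use that the total signature is a ring homomorphism, so it is multiplicative on tensor products. Since $\Ad(\vf)$ is split orthogonal, it is of the first kind and $[Z(\Ad(\vf)):K] = 1$, so the definition of the signature of an algebra with involution gives $\sign_P(\Ad(\vf)) = \sqrt{\sign_P(\itr(\Ad(\vf)))} = \sqrt{\sign_P(\vf)^2} = |\sign_P(\vf)|$.

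That is essentially the whole argument, but I should double-check the one subtlety: the definition of $\sign_P(\Psi)$ as the nonnegative square root only makes sense because \eqref{C:sign-def} guarantees $\sign_P(\itr(\Psi))$ is of the form $k\cdot s^2$ with $s\in\N$. For $\Psi = \Ad(\vf)$ this is transparently consistent, since $\sign_P(\vf)^2$ is visibly a perfect square, and $\sign_P(\Ad(\vf))$ is then defined to be the nonnegative integer $s$ with $s^2 = \sign_P(\vf)^2$, i.e.\ $s = |\sign_P(\vf)|$. Alternatively, and perhaps more in the spirit of the section, I could argue via \eqref{P:rcai}(a): after extending scalars to the real closure $K_P$ of $K$ at $P$ (which preserves signatures and the property of being split orthogonal), we have $\Ad(\vf)_{K_P} \simeq \Ad(r\times\la 1\ra \perp \eta)$ with $\eta$ hyperbolic and $\sign(\itr(\Ad(\vf)_{K_P})) = r^2$, so $\sign_P(\Ad(\vf)) = r$; on the other hand $\vf_{K_P} \simeq r\times\la 1\ra \perp \eta'$ for a form $\eta'$ that is hyperbolic after possibly replacing $\vf$ by $-\vf$ (or $-r\times\la 1\ra\perp\eta'$ if $\vf$ is negative definite up to hyperbolic part), giving $|\sign_P(\vf)| = r$ as well. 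I would present the short version using \eqref{P:trace-qf-square} and multiplicativity of the total signature, as it is cleaner.

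The main (and only) obstacle is purely bookkeeping: making sure the square root in the definition of $\sign_P(\Psi)$ is applied correctly, i.e.\ that it yields $|\sign_P(\vf)|$ rather than $\sign_P(\vf)$, and recording that the hypotheses of \eqref{C:sign-def} are met so the definition applies. There is no real analytic or algebraic difficulty. I would write the proof as: by \eqref{P:trace-qf-square}, $\itr(\Ad(\vf)) \simeq \vf\ox\vf$; since $\sign_P : WK \to \zz$ is a ring homomorphism, $\sign_P(\itr(\Ad(\vf))) = \sign_P(\vf)^2$; as $\Ad(\vf)$ is split and hence $[Z(\Ad(\vf)):K] = 1$, the definition of $\sign_P$ for algebras with involution gives $\sign_P(\Ad(\vf)) = \sqrt{\sign_P(\vf)^2} = |\sign_P(\vf)|$, as claimed.
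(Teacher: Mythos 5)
Your argument is correct and is exactly the paper's proof: the paper simply cites \eqref{P:trace-qf-square} to get $\itr(\Ad(\vf))\simeq\vf\ox\vf$ and declares the rest clear, which is precisely the bookkeeping with the square root in the definition of $\sign_P$ that you spell out. No issues.
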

\begin{proof}
This is clear as $\itr(\Ad(\vf))\simeq \vf\ox\vf$ by \Cref{P:trace-qf-square}.
\end{proof}

%%%%%%%%%%%%%%%%%%%%%%%%%%%%%%%%%%%%%%%%%%%%%%%%%%%%%%%%%%%
\section{Local-global principle for weak hyperbolicity} %%%
%%%%%%%%%%%%%%%%%%%%%%%%%%%%%%%%%%%%%%%%%%%%%%%%%%%%%%%%%%%
\label{sec8}

Let $\Psi$ be a $K$-algebra with involution. 
We call $\Psi$ \emph{weakly hyperbolic}
if there exists a positive integer $n$ such that $n\x \Psi$ is  hyperbolic.
We say that $\Psi$ \emph{has trivial  signature} and write $\sign(\Psi)=0$ to indicate that $\sign_P(\Psi)=0$ for every $P\in X_K$.

\begin{lem}\label{C:pmqExt}
Assume that there exists $a\in \mg K\setminus \pm\sq K$ such that $\Psi_{K(\sqrt{a})}$ and  $\Psi_{K(\sqrt{-a})}$ are  hyperbolic. Then $2\x \Psi$ is hyperbolic.
\end{lem}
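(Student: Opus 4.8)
The aim is to deduce the hyperbolicity of $2\times\Psi$ from the hyperbolicity of $\Psi_{K(\sqrt a)}$ and $\Psi_{K(\sqrt{-a})}$, where $a\in\mg K\setminus\pm\sq K$. The natural tool is \eqref{quExt}, which describes precisely when a quadratic extension of the base field makes an algebra with involution hyperbolic: for each of the two extensions, either the corresponding \'etale algebra $(a)_K$ (respectively $(-a)_K$) embeds into $\Psi$, or $\Psi$ is split orthogonal of the form $\Ad(\vf)$ with the anisotropic part of $\vf$ a multiple of $\lla a\rra$ (respectively $\lla -a\rra$). This gives four combinations to treat; by symmetry ($a\leftrightarrow -a$) only three are essentially distinct.

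First I would handle the case where both $(a)_K$ and $(-a)_K$ embed into $\Psi$. In that case one gets two skew-symmetric elements $u,v\in\Skew{A,\s}\cap\mg A$ with $u^2=a$ and $v^2=-a$; I would try to combine them (roughly, looking at $uv$ or at the rank-two situation they generate, as in the proof of \eqref{P:bqhyp}) to produce in $\Ad\la1,1\ra\ox\Psi=2\times\Psi$ a skew element $f$ with $f^2=1$, which by \eqref{P:hyper-char} gives hyperbolicity; alternatively, use that $(a\qil{}{-1})_K$ or similar embeds and that $2\times(a\qil b)_K$-type algebras are hyperbolic via \eqref{P:flipflop} (as in the proof of \eqref{T:pyth-hamilton-orth}). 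Next, the mixed case where, say, $(a)_K$ embeds into $\Psi$ and $\Psi\simeq\Ad(\vf)$ with $\vf_\an$ a multiple of $\lla -a\rra$: here the remark after \eqref{quExt} forces $\vf$ itself to be a multiple of $\lla a\rra$ as well (since $(a)_K\hookrightarrow\Ad(\vf)$), so $\vf_\an$ is simultaneously a multiple of $\lla a\rra$ and of $\lla -a\rra$; but $\lla a\rra\ox\lla-a\rra=\lla a,-a\rra$ is hyperbolic, so $\vf$ must already be hyperbolic, hence so is $\Psi$ and a fortiori $2\times\Psi$. Finally the case where $\Psi\simeq\Ad(\vf)$ with $\vf_\an$ a multiple of both $\lla a\rra$ and $\lla -a\rra$: again $\lla a,-a\rra$ hyperbolic forces $\vf_\an$, hence $\vf$ and $\Psi$, to be hyperbolic.

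The main obstacle I expect is the first (``double embedding'') case: turning the two anticommuting-or-not elements $u,v$ with $u^2=a$, $v^2=-a$ inside $\Psi$ into a genuine hyperbolic element of $2\times\Psi$. The clean way is probably to observe that $(a)_K$ embedding into $\Psi$ means $\Ad\lla a\rra\ox\Psi$ behaves well, and to use \eqref{P:simhyp}/\eqref{C:DinG}: since $\Psi_{K(\sqrt a)}$ is hyperbolic, $\D_K\lla a\rra\subseteq\G(\Psi)$, and since $\Psi_{K(\sqrt{-a})}$ is hyperbolic, $\D_K\lla -a\rra\subseteq\G(\Psi)$; as $a\in\D_K\lla a\rra$ and $-a\in\D_K\lla -a\rra$, both $a$ and $-a$, hence $-1=a\cdot(-a)^{-1}\cdot(\text{square})$... more precisely $-1\in\G(\Psi)$ up to squares gives, via \eqref{P:simhyp} with the form $\lla -1\rra$, that $\Ad\lla -1\rra\ox\Psi=2\times\Psi$ is hyperbolic. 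That last reduction — checking $-1\in\G(\Psi)$ follows from $a,-a\in\G(\Psi)$ together with $\sq K\subseteq\G(\Psi)$, so that $\G(\Psi)$ being a subgroup of $\mg K$ contains $a\cdot(-a)=-a^2$ and hence $-1$ — is the crux, and it in fact subsumes all cases once one notes that $\D_K\lla a\rra\subseteq\G(\Psi)$ and $\D_K\lla-a\rra\subseteq\G(\Psi)$ hold by \eqref{C:DinG} in every case (the $a\notin\pm\sq K$ hypothesis is exactly what makes both quadratic extensions proper). So the streamlined proof is: apply \eqref{C:DinG} twice, deduce $-1\in\G(\Psi)$ from the subgroup property of $\G(\Psi)$, then apply \eqref{P:simhyp} with $\la a\ra$ replaced by $\la -1\ra$ to conclude that $\Ad\lla -1\rra\ox\Psi=2\times\Psi$ is hyperbolic.
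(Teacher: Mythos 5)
Your streamlined argument in the final paragraph is exactly the paper's proof: apply \eqref{C:DinG} to both quadratic extensions (both proper because $a\notin\pm\sq{K}$) to get $a,-a\in\G(\Psi)$, use that $\G(\Psi)$ is a subgroup of $\mg{K}$ to get $-1\in\G(\Psi)$, and conclude via \eqref{P:simhyp} that $2\times\Psi\simeq\Ad\lla -1\rra\ox\Psi$ is hyperbolic; the case analysis via \eqref{quExt} in your first two paragraphs is unnecessary. One small slip worth noting: since $\lla a\rra=\la 1,-a\ra$, in general $-a\in\D_K\lla a\rra$ and $a\in\D_K\lla -a\rra$ (not the other way around, e.g.\ $3\notin\D_{\qq}\la 1,-3\ra$), but this swap does not affect the conclusion that both $a$ and $-a$ lie in $\G(\Psi)$.
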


\begin{proof} 
Let $a\in\mg{K}\setminus\pm\sq{K}$ be such that $\Psi_{K(\sqrt{a})}$ and  $\Psi_{K(\sqrt{-a})}$ are  hyperbolic.
By \Cref{C:DinG}  $a$ and $-a$ both belong to $\G(\Psi)$.
As $\G(\Psi)$ is a group, we conclude that $-1\in \G(\Psi)$, so  $2\x \Psi\simeq \Ad\lla -1\rra \ox \Psi $ is hyperbolic by \Cref{P:simhyp}.
\end{proof}

\begin{prop} 
\label{P:lev}
Assume that $K$ is nonreal and let $n\in\N$ be such that $-1$ is a sum of $2^n$ squares in $K$.
Then $2^{n+1}\x \Psi$ is hyperbolic.
\end{prop}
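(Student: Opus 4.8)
The plan is to reduce the statement to the hyperbolicity of one quadratic form. By the definition of $n\x\Psi$ we have $2^{n+1}\x\Psi=\Ad(2^{n+1}\x\la 1\ra)\ox\Psi$, so by \eqref{P:hypermult} it suffices to prove that $\Ad(2^{n+1}\x\la 1\ra)$ is hyperbolic. As this is the $K$-algebra with involution adjoint to the quadratic form $2^{n+1}\x\la 1\ra$, the characterisation of hyperbolicity for adjoint involutions recalled at the beginning of Section~\ref{sec6} reduces the task to showing that $2^{n+1}\x\la 1\ra$ is a hyperbolic quadratic form.

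The key point is that $2^{n+1}\x\la 1\ra$ is the $(n+1)$-fold Pfister form $\lla -1,\dots,-1\rra$, because $\lla -1\rra=\la 1,1\ra=2\x\la 1\ra$ and tensoring an all-ones form by $\la 1,1\ra$ doubles its dimension. I would then write $2^{n+1}\x\la 1\ra=(2^n\x\la 1\ra)\perp(2^n\x\la 1\ra)$ as an orthogonal sum of two copies of $2^n\x\la 1\ra$, supported on complementary subspaces $V_1$ and $V_2$. By hypothesis $-1\in\D_K(2^n)$, so there is $v_1\in V_1$ at which the form $2^n\x\la 1\ra$ takes the value $-1$; if $v_2\in V_2$ is a standard basis vector, at which it takes the value $1$, then $v_1+v_2$ is non-zero and isotropic, since $V_1\perp V_2$ and $-1+1=0$. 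Hence the Pfister form $2^{n+1}\x\la 1\ra$ is isotropic, and therefore hyperbolic by \cite[Chap.~4, (1.5)]{Scharlau}.

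Putting this together, $\Ad(2^{n+1}\x\la 1\ra)$ is a hyperbolic $K$-algebra with involution, and so $2^{n+1}\x\Psi=\Ad(2^{n+1}\x\la 1\ra)\ox\Psi$ is hyperbolic by \eqref{P:hypermult}. I do not anticipate a genuine obstacle here: the only point requiring care is the elementary bookkeeping that identifies $2^{n+1}\x\la 1\ra$ with a Pfister form and reads off its isotropy from the hypothesis. It is worth noting that the exponent $n+1$ (rather than $n$) is exactly what this argument produces and is in general necessary, since $2^n\x\la 1\ra$ itself can be anisotropic under the hypothesis.
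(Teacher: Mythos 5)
Your proof is correct and follows essentially the same route as the paper: both identify $2^{n+1}\x\la 1\ra$ with the $(n+1)$-fold Pfister form $\lla -1,\dots,-1\rra$, deduce its isotropy (hence hyperbolicity) from $-1\in\D_K(2^n)$, and conclude via $2^{n+1}\x\Psi\simeq\Ad(2^{n+1}\x\la 1\ra)\ox\Psi$ and \eqref{P:hypermult}. The paper merely compresses the isotropy verification that you spell out explicitly.
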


\begin{proof} By the assumption, the Pfister form $\pi=2^{n+1}\x \la1\ra$ over $K$ is isotropic, whereby it is hyperbolic.
Hence, $2^{n+1}\x \Psi \simeq \Ad(\pi) \ox \Psi$ is hyperbolic.
\end{proof}

\begin{lem}
\label{L:zk}
Assume that $2^n\x \Psi$ is not hyperbolic for any $n\in\N$, and that for every proper finite extension $L/K$ there exists $n\in \N$ such that $2^n\x \Psi_L$ is hyperbolic. 
Then $K$ is real closed and $\sign(\Psi)\neq 0$.
\end{lem}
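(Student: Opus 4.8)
The plan is to show that the hypotheses force $K$ to have no proper finite extensions except quadratic ones arising from adjoining $\sqrt{-1}$-type elements, which by the Artin--Schreier theory pins down $K$ as real closed, and then to deduce that $\sign(\Psi)\neq 0$ from the non-hyperbolicity of all $2^n\times\Psi$.

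First I would establish that $K$ is real. If $K$ were nonreal, then by the Artin--Schreier theorem $-1\in\sums{K}$, so $-1$ is a sum of $2^n$ squares for some $n$, and then \eqref{P:lev} gives that $2^{n+1}\times\Psi$ is hyperbolic, contradicting the first hypothesis. So $K$ is real. Next I would show $K$ has no proper finite extension of odd degree: if $L/K$ is such, then by hypothesis $2^n\times\Psi_L\simeq\Ad(2^n\times\la1\ra)\ox\Psi_L$ is hyperbolic for some $n$; but $\Ad(2^n\times\la1\ra)\ox\Psi$ is a $K$-algebra with involution whose scalar extension to $L$ (a field of odd degree over $K$) is hyperbolic, so by the Bayer-Fluckiger--Lenstra theorem \eqref{T:BFL} it is already hyperbolic over $K$, i.e.\ $2^n\times\Psi$ is hyperbolic, again a contradiction.

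Then I would handle quadratic extensions. For $a\in\mg{K}\setminus\sq{K}$, the extensions $K(\sqrt a)/K$ and, when $-a\notin\sq{K}$, $K(\sqrt{-a})/K$ are proper finite, so by hypothesis $2^m\times\Psi_{K(\sqrt a)}$ and $2^m\times\Psi_{K(\sqrt{-a})}$ are hyperbolic for suitable exponents; replacing $\Psi$ by $\Ad(2^m\times\la1\ra)\ox\Psi$ (which is again not weakly $2$-power hyperbolic by the same argument as above, since a hyperbolic multiple of it would give one of $\Psi$), \eqref{C:pmqExt} would force $2\times\Psi$ hyperbolic unless $-a\in\sq{K}$. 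Thus for every $a\in\mg K$, either $a\in\sq K$ or $-a\in\sq K$; combined with $K$ real this says $\mg K=\sq K\cup-\sq K$ and $-1\notin\sq K$, i.e.\ $K$ is euclidean. A euclidean field with no proper odd-degree extension is real closed: indeed its absolute Galois group has order a power of $2$ and is torsion-free-free only in the real closed case by the Artin--Schreier theorem on fields whose absolute Galois group is finite. I would invoke this classical characterization directly.

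The main obstacle I anticipate is the bookkeeping with the auxiliary algebra $\Ad(2^m\times\la1\ra)\ox\Psi$: one must check that the hypothesis ``$2^n\times\Psi$ not hyperbolic for any $n$'' is inherited appropriately so that \eqref{C:pmqExt} and \eqref{C:DinG} can be applied, and that one does not merely get $2^{n}\times\Psi$ hyperbolic for some $n$ but genuinely a contradiction. Once $K$ is shown to be real closed, the final assertion $\sign(\Psi)\neq0$ is immediate from \eqref{C:rc}: if $\sign(\Psi)=0$ then $\itr(\Psi)$ has zero signature at the unique ordering, hence $\itr(\Psi)$ is hyperbolic (by \eqref{P:rcai}, where the signature of $\itr(\Psi)$ is a nonzero square times $[Z(\Psi):K]$ unless $\Psi$ is hyperbolic or $\Psi\simeq r\times(-1\qil-1)_K$), and then \eqref{C:rc} gives that $2\times\Psi$ is hyperbolic, contradicting the hypothesis.
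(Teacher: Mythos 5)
Your proof is correct and follows essentially the same route as the paper: realness via \eqref{P:lev}, no odd-degree extensions via \eqref{T:BFL}, only quadratic extension $K(\sqrt{-1})$ via \eqref{C:pmqExt} (applied, as you rightly note, to $\Ad(2^m\times\la 1\ra)\ox\Psi$), the classical characterization of real closed fields, and then the real-closed classification to get $\sign(\Psi)\neq 0$. The only differences are cosmetic: the paper cites Scharlau for the real-closedness criterion rather than your (somewhat garbled) Galois-theoretic aside, and it concludes via the decomposition $\Psi\simeq\Ad(\vf)\ox\Phi$ from \eqref{P:rcai} where you use \eqref{C:rc}, which is if anything slightly cleaner.
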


\begin{proof}
By \Cref{P:lev} the field $K$ is real, by \Cref{C:pmqExt}  its only quadratic field extension is $K(\sqrt{-1})$, and
by \Cref{T:BFL} $K$ has no proper finite field extension of odd degree.  
Thus $K$ is real closed by \cite[Chap.~3, (2.3)]{Scharlau}.  It follows from \Cref{P:rcai} that $\Psi\simeq \Ad(\vf)\ox\Phi$ for a form $\vf$ over $K$ and a  
$K$-division algebra with canonical involution $\Phi$. 
As $K$ is real closed, 
it follows with \Cref{E:itr} that $\itr(\Phi)$ is positive definite, and thus $\sign(\Psi)$  is equal to  $|\sign(\vf)|$ or to $2\cdot |\sign(\vf)|$.
As $\Psi$ is not hyperbolic, $\vf$ is not hyperbolic, and we conclude that $\sign(\Psi)\neq 0$.
\end{proof}

\begin{lem}\label{L:nilpotent-awi}
Assume that $\Psi$ is split and let $r\in\nat$.
Then $\Psi^{\ox 2r}$ is hyperbolic if and only if $\itr(\Psi)^{\ox r}$ is hyperbolic.
\end{lem}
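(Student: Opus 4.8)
Since $\Psi$ is split, it is either split orthogonal, split symplectic, or degenerate unitary. The symplectic and degenerate unitary cases are trivial: by \eqref{C:trivial-hyper} $\Psi$ itself is hyperbolic, so $\Psi^{\ox 2r}$ is hyperbolic by \eqref{P:hypermult}, and likewise $\itr(\Psi)$ is hyperbolic by \eqref{C:hyper-trace}, so $\itr(\Psi)^{\ox r}$ is hyperbolic. Thus the content is in the split orthogonal case, where $\Psi\simeq\Ad(\vf)$ for a quadratic form $\vf$ over $K$, and the statement to prove becomes: $\Ad(\vf)^{\ox 2r}$ is hyperbolic if and only if $\itr(\Ad(\vf))^{\ox r}$ is hyperbolic.

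**Key steps.** First I would invoke \eqref{P:ad-mult} repeatedly to identify $\Ad(\vf)^{\ox 2r}\simeq\Ad(\vf^{\ox 2r})$, so that by the characterization of hyperbolicity for split orthogonal algebras (an adjoint algebra $\Ad(\psi)$ is hyperbolic iff $\psi$ is hyperbolic as a quadratic form) the left-hand side is equivalent to: $\vf^{\ox 2r}$ is a hyperbolic quadratic form. Next, by \eqref{P:trace-qf-square} we have $\itr(\Ad(\vf))\simeq\vf\ox\vf$, so $\itr(\Ad(\vf))^{\ox r}\simeq(\vf\ox\vf)^{\ox r}=\vf^{\ox 2r}$ as quadratic forms. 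Now \eqref{C:hyper-trace} gives that hyperbolicity of $\Ad(\vf)^{\ox 2r}$ implies hyperbolicity of its involution trace form, which is $\itr(\Ad(\vf^{\ox 2r}))\simeq \vf^{\ox 2r}\ox\vf^{\ox 2r}$ — but that is not quite $\vf^{\ox 2r}=\itr(\Ad(\vf))^{\ox r}$, so I must be careful and instead argue directly at the level of the quadratic form $\vf^{\ox 2r}$: both sides of the claimed equivalence reduce, via the two cited identifications, to the single assertion that $\vf^{\ox 2r}$ is hyperbolic. Hence the equivalence is an identity once the two algebra-with-involution quantities are translated into quadratic-form language.

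**Main obstacle.** The only real subtlety is keeping the bookkeeping straight between the tensor power $\itr(\Phi)^{\ox r}$ and $\itr(\Phi^{\ox 2r})$: one wants to use $\itr(\Ad(\vf))\simeq\vf\ox\vf$ first and then take the $r$-th tensor power of quadratic forms, rather than forming $\Ad(\vf)^{\ox 2r}$ and applying $\itr$ afterwards. Concretely, $\itr(\Ad(\vf))^{\ox r}\simeq(\vf\ox\vf)^{\ox r}\simeq\vf^{\ox 2r}$, and separately $\Ad(\vf)^{\ox 2r}\simeq\Ad(\vf^{\ox 2r})$ is hyperbolic iff $\vf^{\ox 2r}$ is hyperbolic (as a Pfister-divisible form, or simply because $\Ad$ of a form is hyperbolic precisely when the form is). So both conditions are equivalent to hyperbolicity of the quadratic form $\vf^{\ox 2r}$, and we are done; the split symplectic and degenerate unitary cases are disposed of at the outset as noted above. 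I would also remark that \eqref{T:JAC} (Jacobson) gives an alternative route: with $\Phi=\Ad(\vf)^{\ox 2r-1}$ — no, cleaner to stay with the direct translation above.
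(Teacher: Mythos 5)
Your case analysis of ``split'' is incomplete, and the omitted case is exactly where the content of the lemma lies. A split $K$-algebra with involution can also be \emph{non-degenerate unitary}: its centre is a quadratic field extension $K(\sqrt{a})$ of $K$ and the underlying algebra is a matrix algebra over that field (see case $(e)$ of \eqref{P:rcai} for an instance). Such a $\Psi$ is not isomorphic to $\Ad(\vf)$ for any quadratic form $\vf$ over $K$ --- adjoint algebras of quadratic forms are split \emph{orthogonal} --- so your reduction ``both sides are equivalent to hyperbolicity of the quadratic form $\vf^{\ox 2r}$'' does not apply to it. The paper handles this case by writing $\Psi\simeq\Ad(\vf)\ox(a)_K$ via \eqref{P:JAC}, computing $\Psi^{\ox 2}\simeq\Ad(\vf\ox\vf)\ox(a)_K$ and $\itr(\Psi)\simeq\vf\ox\vf\ox\lla a\rra$, and then invoking Jacobson's theorem \eqref{T:JAC} to pass between hyperbolicity of the algebra with involution and hyperbolicity of its trace form. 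That appeal to \eqref{T:JAC} is not something you can route around by formal translation. Note also that in the unitary case $\itr$ is multiplicative only up to a factor $2\times$ (second part of \eqref{P:itr-prod}), so the relation between $\itr(\Psi)^{\ox r}$ and $\itr(\Psi^{\ox r})$ --- precisely the bookkeeping you dismiss as the ``only real subtlety'' --- acquires extra factors of $2\times$ there and genuinely requires attention.

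For the cases you do treat, your argument is correct and is essentially the paper's: the split orthogonal case via $\Ad(\vf)^{\ox 2r}\simeq\Ad(\vf^{\ox 2r})$ and $\itr(\Ad(\vf))\simeq\vf\ox\vf$ amounts to the same computation as the paper's appeal to \eqref{P:awi-square}, and the split symplectic and degenerate unitary cases are disposed of by \eqref{C:trivial-hyper} and \eqref{C:hyper-trace} exactly as in the paper. But as it stands the proof is missing the one case that carries the real weight of the lemma.
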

\begin{proof}
Replacing $\Psi$ by $\Psi^{\ox r}$ we may  in view of \Cref{P:itr-prod}  assume that $r=1$.
If $\Psi$ is symplectic then $\Psi$ and $\itr(\Psi)$ are hyperbolic by \Cref{C:trivial-hyper} and \Cref{C:hyper-trace}.
If $\Psi$ is orthogonal, then
$\Psi^{\ox 2}\simeq \Ad(\itr(\Psi))$ by \Cref{P:awi-square}, implying the statement.
Assume now that $\Psi$ is unitary.
Then $\Psi\simeq \Ad(\vf)\ox (a)_K$ for a form $\vf$ over $K$ and some $a\in\mg{K}$.
We obtain that $\Psi^{\ox 2}\simeq \Ad(\vf\ox\vf)\ox (a)_K$ and
$\itr(\Psi)\simeq \vf\ox\vf\ox \lla a\rra$.
Using \Cref{T:JAC} we conclude that $\Psi^{\ox 2}$ is hyperbolic if and only if $\itr(\Psi)$ is hyperbolic.
\end{proof}

\begin{thm}
\label{T:PLULG}
The following are equivalent:
\begin{enumerate}[$(i)$]
\item $\sign(\Psi)=0$;
\item $\Psi$ is weakly hyperbolic;
\item $2^n\times \Psi$ is hyperbolic for some $n\in\nat$;
\item either  $\Psi^{\otimes m}$ is hyperbolic for some $m\geq 1$, or $K$ is nonreal and $\Psi$ is split orthogonal of odd degree.
\end{enumerate}
These conditions are trivially satisfied if $K$ is nonreal.
\end{thm}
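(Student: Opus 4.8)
The plan is to prove the cycle of implications $(iv)\implies(iii)\implies(ii)\implies(i)\implies(iv)$, together with the final remark that all four conditions hold when $K$ is nonreal. The last remark is the easiest: if $K$ is nonreal, then by the Artin--Schreier criterion \eqref{T:AS} there is some $n$ with $-1\in\D_K(2^n)$, so \eqref{P:lev} gives that $2^{n+1}\x\Psi$ is hyperbolic, establishing $(iii)$ outright; and since $X_K=\emptyset$, condition $(i)$ is vacuous. Note also that this handles the second alternative in $(iv)$: if $K$ is nonreal one may always pass straight to $(iii)$, and conversely if $K$ is nonreal and $\Psi$ is split orthogonal of odd degree then $(iii)$ still holds by the same argument. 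So in what follows I assume $K$ is real.

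The implications $(iii)\implies(ii)\implies(i)$ are routine: $(iii)\implies(ii)$ is trivial, and for $(ii)\implies(i)$ one extends scalars to the real closure $K_P$ at any ordering $P$; if $n\x\Psi$ is hyperbolic then so is $(n\x\Psi)_{K_P}=n\x\Psi_{K_P}$, and by the classification \eqref{P:rcai} over the real closed field $K_P$ a nonzero multiple of $\Psi_{K_P}$ is hyperbolic only in the cases $(b)$, $(c)$, $(f)$ of \eqref{P:rcai} — or in cases $(a)$, $(d)$, $(e)$ with $r=0$ — in all of which $\sign_P(\Psi)=0$. For $(iv)\implies(iii)$: if $\Psi^{\ox m}$ is hyperbolic, first reduce $m$ to a power of $2$ (replacing $m$ by $2^{\lceil\log_2 m\rceil}$ and using \eqref{P:hypermult}), so $\Psi^{\ox 2^s}$ is hyperbolic for some $s$; then induct on $s$ using \eqref{L:nilpotent-awi}, \eqref{P:awi-square} and \eqref{C:PLGP} to pass from nilpotence of $[\itr(\Psi)]$-type data to a hyperbolic multiple — concretely, $\Psi^{\ox 2^s}$ hyperbolic forces $\itr(\Psi)^{\ox 2^{s-1}}$ hyperbolic (after splitting, which one may arrange, or by working directly), and since $K$ is real, \eqref{C:PLGP} turns $2^{s-1}$-fold tensor nilpotence of the quadratic form into a hyperbolic $2^m$-multiple, which one feeds back through $\Ad$ and \eqref{P:simhyp}-type arguments.

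The real content, and the main obstacle, is $(i)\implies(iii)$. Here I would argue by a transfer/Zorn's-Lemma contradiction argument in the spirit of \eqref{L:zk}. Suppose $\sign(\Psi)=0$ but $2^n\x\Psi$ is not hyperbolic for any $n$. Consider the family of field extensions $L/K$ (inside some large extension, or via a suitable absolute bound argument) over which $2^n\x\Psi_L$ remains non-hyperbolic for all $n$; one shows this family has a maximal element $L_0$ — the subtlety being to bound the algebra's behaviour so that the chain condition applies, which is where \eqref{C:Scharlau-2prim} and the finiteness of $\deg(\Psi)$ enter. Over $L_0$, the hypothesis of \eqref{L:zk} is met: every proper finite extension splits off a hyperbolic multiple (using \eqref{C:pmqExt}, \eqref{P:lev}, \eqref{T:BFL} to rule out quadratic, level, and odd-degree extensions as in \eqref{L:zk}). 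Hence \eqref{L:zk} forces $L_0$ real closed with $\sign(\Psi_{L_0})\neq 0$. But $\sign(\Psi)=0$ is preserved under scalar extension to $L_0$ (signatures are computed by extending to the real closure at each ordering, and every ordering of $L_0$ restricts to one of $K$), so $\sign_{P_0}(\Psi_{L_0})=0$ at the unique ordering $P_0$ of $L_0$ — a contradiction. The delicate point throughout is making the maximality argument rigorous: one wants a common finite bound (coming from \eqref{C:Scharlau-2prim} and the structure of $\itr(\Psi)$, or from an annihilating-polynomial estimate as in Section~\ref{sec9}) ensuring that the relevant $n$ can be taken uniformly, so that the union of a chain still has the non-hyperbolicity property and Zorn applies.
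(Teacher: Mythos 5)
There is a genuine gap: you never prove any implication \emph{into} condition $(iv)$. You announce the cycle $(iv)\Rightarrow(iii)\Rightarrow(ii)\Rightarrow(i)\Rightarrow(iv)$, but in the execution the closing arrow is silently replaced by $(i)\Rightarrow(iii)$ (your ``real content'' paragraph). What you actually establish is $(i)\Leftrightarrow(ii)\Leftrightarrow(iii)$ together with $(iv)\Rightarrow(iii)$, which leaves $(iv)$ as a sufficient condition only. The missing direction is not formal: the paper proves $(i)\Rightarrow(iv)$ by first reducing to the case where $\Psi$ is simple and split (replacing $\Psi$ by a tensor power $\Psi^{\ox e}$ and using \eqref{P:sign-mult}), then observing that $(i)$ gives $\sign(\itr(\Psi))=0$; if $\dim\itr(\Psi)$ is odd this forces $K$ nonreal and $\Psi$ split orthogonal of odd degree (the exceptional branch of $(iv)$), and if it is even, \eqref{C:PLGP} yields that $\itr(\Psi)^{\ox r}$ is hyperbolic for some $r$, whence $\Psi^{\ox 2r}$ is hyperbolic by \eqref{L:nilpotent-awi}. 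This step is also what justifies the final sentence of the theorem for condition $(iv)$ over nonreal fields, which your nonreal-case discussion does not cover (you only verify $(iii)$ there). You should add this argument; without it the theorem as stated is not proved.

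Two smaller remarks. First, your $(i)\Rightarrow(iii)$ is exactly the paper's argument (Zorn plus \eqref{L:zk}), but the ``delicate point'' you flag about needing a uniform bound on $n$ is a non-issue: for a chain of algebraic extensions with union $L$, if some $2^n\x\Psi_L$ is hyperbolic, the witnessing idempotent $e$ with $e^2=e$, $\s(e)=1-e$ involves only finitely many elements of $L$ and hence already lives over some member of the chain; so the union stays in the family and Zorn applies directly, with no appeal to \eqref{C:Scharlau-2prim}. Second, your route for $(iv)\Rightarrow(iii)$ is needlessly roundabout and, as sketched, shaky for non-split $\Psi$ (passing from a hyperbolic multiple of $\itr(\Psi)$ back to a hyperbolic multiple of $\Psi$ requires \eqref{T:JAC} and is only available after a reduction to small index); the clean route is simply $(iv)\Rightarrow(i)$ by multiplicativity of signatures \eqref{P:sign-mult}, followed by your $(i)\Rightarrow(iii)$.
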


\begin{proof}
Trivially  $(iii)$ implies $(ii)$, and by \Cref{P:sign-mult}  any of the conditions implies $(i)$.

Suppose that $2^n\x \Psi$ is not hyperbolic for any $n\in\N$. 
By Zorn's Lemma there exists a maximal algebraic extension $L/K$ such that $2^n\x \Psi_L$ is not hyperbolic for any $n\in\nat$. By \Cref{L:zk} $L$ is real closed and $\Psi_L$ has nonzero signature at the unique ordering of $L$.
For the ordering $P=L^2\cap K$ of $K$ we  obtain that  $\sign_P(\Psi)\neq 0$. This shows that $(i)$ implies $(iii)$.

To finish the proof we show that $(i)$ implies $(iv)$.
We may assume that $\Psi$ is simple as otherwise $\Psi$ is hyperbolic. 
We may choose a positive integer $e$ such that $\Psi^{\ox e}$ is split.
Note that if $\Psi^{\ox e}$ is orthogonal of odd degree, then so is $\Psi$ and in particular $\Psi$ is split.
In view of \Cref{P:sign-mult}, we may now replace $\Psi$ by $\Psi^{\ox e}$ and assume that $\Psi$ is split.
From the assumption $(i)$ we obtain that $\sign(\itr(\Psi))=0$.
Note further that $\dim_K(\Psi)=\dim_K(\itr(\Psi))$.
If $\dim (\itr(\Psi))$ is odd, we conclude that $K$ is nonreal and $\Psi$ is split orthogonal of odd degree.
If $\dim (\itr(\Psi))$ is even, we obtain by \Cref{C:PLGP}
that $\itr(\Psi)^{\ox r}$ is hyperbolic for some positive integer $r$, and then $\Psi^{\ox 2r}$ is hyperbolic by \Cref{L:nilpotent-awi}.
\end{proof}

The equivalence $({ii}\Leftrightarrow{iii})$ in \Cref{T:PLULG} is equivalent to Scharlau's result \cite[Theorem~5.1]{Scha70} that the torsion of the Witt group of hermitian forms over an algebra with involution is $2$-primary.
The equivalence $(ii\Leftrightarrow iv)$ was observed in \cite[Proposition~5.41]{Unger}.

\begin{cor}
Assume that $\Z(\underline{\Psi})\simeq K(\sqrt{a})$ with $a\in\sums{K}$ in case $\Psi$ is unitary, and otherwise that
$\underline{\Psi_{R}}\sim  (-1,-\type(\Psi))_{R}$ for every real closure $R$ of $K$.
Then $\Psi$ is weakly hyperbolic.
\end{cor}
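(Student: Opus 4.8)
The plan is to deduce the statement from \eqref{T:PLULG}: since there $(i)\iff(ii)$, it suffices to prove that $\sign(\Psi)=0$, that is, $\sign_P(\Psi)=0$ for every $P\in X_K$ (there is nothing to show when $K$ is nonreal, as then $X_K=\emptyset$). I would fix $P\in X_K$, choose a real closure $R$ of $K$ at $P$, and observe that $\itr(\Psi_R)\simeq\itr(\Psi)_R$, so that $\sign_P(\itr(\Psi))=\sign(\itr(\Psi_R))$; since over the real closed field $R$ a quadratic form has zero signature precisely when it is hyperbolic, and since $\sign_P(\Psi)$ is defined through $\itr(\Psi)$ as in \eqref{C:sign-def}, this reduces the claim to the assertion that $\itr(\Psi_R)$ is hyperbolic. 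The classification \eqref{P:rcai} then does the rest.

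Next I would split into the three relevant types. If $\Psi$ is unitary, the hypothesis gives $Z(\Psi)\simeq K(\sqrt{a})$ with $a\in\sums{K}$; then $a$ is positive at $P$ and hence a square in $R$, so $Z(\Psi)\otimes_K R\simeq R\times R$ and $\Psi_R$ is degenerate unitary, whence $\itr(\Psi_R)$ is hyperbolic by \eqref{P:rcai}$(f)$. If $\Psi$ is of the first kind, then $\Psi_R\sim(-1,-\type(\Psi))_R$ by hypothesis. For $\Psi$ orthogonal this algebra is $(-1,-1)_R$, the quaternion division algebra over $R$, so $\Psi_R$ is non-split orthogonal and \eqref{P:rcai}$(b)$ applies; for $\Psi$ symplectic the algebra $(-1,1)_R$ contains a zero divisor and so is split, whence $\Psi_R$ is split symplectic and \eqref{P:rcai}$(c)$ applies. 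In each case $\itr(\Psi_R)$ is hyperbolic, so $\sign_P(\Psi)=0$; as $P$ was arbitrary, $\sign(\Psi)=0$ and $\Psi$ is weakly hyperbolic by \eqref{T:PLULG}.

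This is essentially a translation exercise, so I do not expect a serious obstacle. The only points needing a little attention are, first, that extending scalars to $R$ may turn a nondegenerate unitary $\Psi$ into a degenerate one, so that one must invoke case $(f)$ rather than $(e)$ of \eqref{P:rcai} and correspondingly check that $\sign_P(\Psi)=0$ still follows from the signature formula; and second, that the relation $\Psi_R\sim(-1,-\type(\Psi))_R$ is to be read with an involution of the same type as $\Psi$ on the quaternion algebra, which is harmless since $\sim$ constrains only the type and the Brauer class, both of which are already determined by the quaternion symbol together with $\type(\Psi)$.
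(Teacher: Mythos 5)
Your proposal is correct and follows exactly the paper's own (much terser) argument: use \eqref{P:rcai} to see that $\itr(\Psi)$ becomes hyperbolic over every real closure, conclude $\sign(\Psi)=0$, and apply \eqref{T:PLULG}. The case analysis you spell out (degenerate unitary via case $(f)$, non-split orthogonal via $(b)$, split symplectic via $(c)$) is precisely the content the paper leaves implicit in its citation of \eqref{P:rcai}.
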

\begin{proof} 
In view of the hypothesis, we obtain from \Cref{P:rcai} that $\itr(\Psi)$ becomes hyperbolic over every real closure of $K$.
Therefore $\sign(\Psi)=0$, and it follows by \Cref{T:PLULG} that $\Psi$ is weakly hyperbolic.
\end{proof}

Let $T$ be a preordering of $K$.
We say that a $K$-algebra with involution $\Psi$ is \emph{$T$-hyperbolic} if there exists a $T$-positive quadratic form $\tau$ over $K$ such that $\Ad(\tau)\ox\Psi$ is hyperbolic. 
It is clear from \Cref{P:ad-mult} that a quadratic form $\vf$ over $K$ is $T$-hyperbolic if and only if $\Ad(\vf)$ is $T$-hyperbolic.

\begin{thm}\label{T:PLULG-Pre}
Let $T$ be a preordering of $K$. 
We have $\sign_P(\Psi)=0$ for every $P\in X_T(K)$ if and only if $\Psi$ is $T$-hyperbolic. Moreover, in this case there exists a $T$-positive Pfister form $\vt$ over $K$ such that $\Ad(\vt)\ox \Psi$ is hyperbolic.
\end{thm}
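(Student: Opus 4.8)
The plan is to prove the two implications separately. The implication from $T$-hyperbolicity of $\Psi$ to the vanishing of $\sign_P(\Psi)$ on $X_T$ will come from a short computation with involution trace forms, whereas for the converse I would bootstrap from the quadratic-form statement \eqref{T:pre-Pfister} together with the local--global principle \eqref{T:PLULG} already established over all of $X_K$.

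For the first direction, suppose $\Ad(\tau)\ox\Psi$ is hyperbolic with $\tau$ a $T$-positive form, and fix $P\in X_T$. Since $\tau$ admits a diagonalisation with entries in $\mg T\subseteq\mg P$ it is positive definite at $P$, so $\sign_P(\tau)=\dim(\tau)\neq 0$. As $\Ad(\tau)$ is of the first kind, \eqref{P:itr-prod} and \eqref{P:trace-qf-square} give $\itr(\Ad(\tau)\ox\Psi)\simeq\tau\ox\tau\ox\itr(\Psi)$, and this form is hyperbolic by \eqref{C:hyper-trace}; applying the ring homomorphism $\sign_P$ then gives $\sign_P(\tau)^2\cdot\sign_P(\itr(\Psi))=0$, hence $\sign_P(\itr(\Psi))=0$, hence $\sign_P(\Psi)=0$ by the definition of $\sign_P(\Psi)$.

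For the converse, assume $\sign_P(\Psi)=0$ for all $P\in X_T$. As $\sign_P(\Psi)$ vanishes precisely when $\sign_P(\itr(\Psi))$ does, this means $\sign_T(\itr(\Psi))=0$, so \eqref{T:pre-Pfister} provides a $T$-positive Pfister form $\vt$ over $K$ with $\vt\ox\itr(\Psi)$ hyperbolic. The idea is then to replace $\Psi$ by $\Psi'=\Ad(\vt)\ox\Psi$: by \eqref{P:itr-prod} and \eqref{P:trace-qf-square} we get $\itr(\Psi')\simeq\vt\ox\vt\ox\itr(\Psi)$, which is hyperbolic, so $\sign_P(\Psi')=0$ for \emph{every} $P\in X_K$, and \eqref{T:PLULG} yields $n\in\N$ with $2^n\x\Psi'$ hyperbolic. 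Finally \eqref{P:ad-mult} gives $2^n\x\Psi'=\Ad\bigl((2^n\x\la 1\ra)\ox\vt\bigr)\ox\Psi=\Ad(\vt')\ox\Psi$, where $\vt'=(2^n\x\la1\ra)\ox\vt$; since $2^n\x\la1\ra=\lla -1,\dots,-1\rra$ is an $n$-fold Pfister form with all entries in $\mg T$, the form $\vt'$ is again a $T$-positive Pfister form, and $\Ad(\vt')\ox\Psi$ is hyperbolic. This proves simultaneously that $\Psi$ is $T$-hyperbolic and the supplementary assertion about the Pfister form.

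The one genuinely non-formal point is this last bootstrap. The statement \eqref{T:pre-Pfister} only tells us that the trace form $\itr(\Psi)$ is $T$-hyperbolic, whereas we need $\Psi$ itself to be $T$-hyperbolic; the device that closes the gap is to absorb the witnessing Pfister form into $\Psi$, so that $\itr(\Ad(\vt)\ox\Psi)$ becomes \emph{genuinely} hyperbolic and the absolute principle \eqref{T:PLULG} applies on the nose, after which the extra factor $2^n\x\la1\ra$ it produces is folded back into a $T$-positive Pfister form. The remaining verifications — multiplicativity of $\itr$ and of $\Ad$, and the fact that tensor products of $T$-positive (Pfister) forms stay $T$-positive (Pfister) — are routine.
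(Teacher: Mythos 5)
Your proof is correct and follows essentially the same route as the paper: the forward direction via multiplicativity of signatures (the paper invokes \eqref{P:sign-mult}, which is just the packaged form of your trace-form computation), and the converse by applying \eqref{T:pre-Pfister} to $\itr(\Psi)$, absorbing the resulting $T$-positive Pfister form into $\Ad(\vt)\ox\Psi$, and then invoking \eqref{T:PLULG} and folding the factor $2^n\times\la 1\ra$ back into a $T$-positive Pfister form. No gaps.
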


\begin{proof}
Assume first that $\Psi$ is $T$-hyperbolic.
Let $\vt$ be a $T$-positive form  over $K$ such that $\Ad(\vt)\ox \Psi$ is hyperbolic.
Using \Cref{P:sign-mult} it follows that $\sign_P(\vt)\cdot \sign_P(\Psi)=0$ for any ordering $P$ of $K$. For $P\in X_T$ we have $\sign_P(\vt)>0$ as $\vt$ is $T$-positive, and we conclude that $\sign_P(\Psi)=0$.

Assume now that $\sign_P(\Psi)=0$ for every $P\in X_T$.
Then $\vt\otimes \itr(\Psi)$ is hyperbolic for some $T$-positive Pfister form $\vt$ over $K$, by \Cref{T:pre-Pfister}. 
By \Cref{P:itr-prod} and \Cref{P:trace-qf-square} we have  $\itr(\Ad(\vt)\otimes \Psi)\simeq \vt\ox\vt\ox \itr(\Psi)$.
We conclude that $\Ad(\vt)\otimes \Psi$ has trivial total signature. 
By \Cref{T:PLULG} there exists $n\in\nat$ such that $2^n\times \Ad(\vt)\ox \Psi$ is hyperbolic.
Hence, the isomorphic $K$-algebra with involution $\Ad(2^n\times \vt)\ox \Psi$ is hyperbolic.
As $2^n\times \vt$ is a $T$-positive Pfister form, this shows the statement.
\end{proof}

%%%%%%%%%%%%%%%%%%%%%%%%%%%%%%%%%%%%%%%%%
\section{Bounds on the torsion order} %%%
%%%%%%%%%%%%%%%%%%%%%%%%%%%%%%%%%%%%%%%%%
\label{sec9}

Let $\Psi$ be a $K$-algebra with involution.
By \Cref{T:PLULG}, if $\Psi^{\ox n}$ is hyperbolic for some $n\in\nat$, we have that $2^m\times \Psi$ is  hyperbolic for some $m\in\nat$.
In this situation, one may want to bound $m$ in terms of $n$ and the degree of $\Psi$.
We restrict to the case $n=2$, that is, where $\Psi^{\ox 2}$ is hyperbolic, and use the function $\Delta:\nat\lra\nat$ introduced in Section 2 to bound $m$.

\begin{thm}
\label{P:gkar}
Assume that $\Psi^{\ox 2}$ is split hyperbolic.
Let $m=\deg(\Psi)$ if $\s$ is orthogonal or unitary, and $m=\frac{1}{2}\deg(\Psi)$ if $\s$ is symplectic.
Then $2^{\Delta(m)}\x \Psi$ is hyperbolic.
\end{thm}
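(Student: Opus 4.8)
The plan is to reduce, by means of the Karpenko--Tignol Theorem~\eqref{T:KT}, to the case where the algebra with involution is split --- or, when it is symplectic, of index~$2$ --- and then to invoke the quadratic-form bound~\eqref{P:Lou}. Write $\Psi=(A,\s)$ and put $\Psi'=2^{\Delta(m)}\x\Psi=\Ad(2^{\Delta(m)}\x\la 1\ra)\ox\Psi$. By~\eqref{P:ad-mult} and~\eqref{P:hypermult} the algebra with involution $(\Psi')^{\ox 2}\simeq\Ad(2^{2\Delta(m)}\x\la 1\ra)\ox\Psi^{\ox 2}$ is again split hyperbolic. Assume, for contradiction, that $\Psi'$ is not hyperbolic. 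Then~\eqref{T:KT} furnishes a field extension $L/K$ such that $\Psi'_L$ is not hyperbolic and either $\Psi'_L$ is split or $\Psi$ is symplectic with $\ind(\Psi'_L)=2$; since $[\Psi'_L]=[\Psi_L]$, the same alternative holds with $\Psi_L$ in place of $\Psi'_L$, and $\Psi_L^{\ox 2}=(\Psi^{\ox 2})_L$ remains hyperbolic.

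First I would dispose of the case where $\Psi_L$ is split. If $\Psi_L$ is split symplectic or degenerate unitary, then it is hyperbolic by~\eqref{C:trivial-hyper}, hence so is $\Psi'_L$ by~\eqref{P:hypermult}, contrary to the choice of $L$. In the remaining sub-cases~\eqref{P:JAC} lets us write $\Psi_L\simeq\Ad(\vf)\ox\Phi$ for a quadratic form $\vf$ over $L$ with $\dim(\vf)=\deg(\Psi)=m$ and a non-degenerate $L$-algebra with canonical involution $\Phi$ (so $\Phi=(L,\id_L)$ if $\Psi$ is orthogonal and $\Phi=(a)_L$ for some $a\in\mg L$ if $\Psi$ is unitary). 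As $\Psi_L$ is split, \eqref{L:nilpotent-awi} together with \eqref{P:itr-prod} and \eqref{P:trace-qf-square} shows that $\vf\ox\vf\ox\itr(\Phi)\simeq\itr(\Psi_L)$ is hyperbolic, so \eqref{P:Lou} applied with $\pi=\itr(\Phi)$ and $n=m$ gives that $2^{\Delta(m)}\x(\vf\ox\itr(\Phi))$ is hyperbolic. Since $\Psi'_L\simeq\Ad(2^{\Delta(m)}\x\vf)\ox\Phi$ by~\eqref{P:ad-mult}, Jacobson's Theorem~\eqref{T:JAC} now says exactly that $\Psi'_L$ is hyperbolic, a contradiction.

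In the remaining case $\Psi$ is symplectic and $\ind(\Psi_L)=2$, so by~\eqref{P:JAC} and~\eqref{C:quat-awi-class} we may write $\Psi_L\simeq\Ad(\vf)\ox(a\qilr b)_L$ for a quadratic form $\vf$ over $L$ with $\dim(\vf)=\tfrac{1}{2}\deg(\Psi)=m$ and elements $a,b\in\mg L$. Using~\eqref{P:awi-square} and~\eqref{E:itr} one computes $\Psi_L^{\ox 2}\simeq\Ad\bigl(2\lla a,b\rra\ox\vf\ox\vf\bigr)$, so the hyperbolicity of $\Psi_L^{\ox 2}$ forces $\vf\ox\vf\ox\lla a,b\rra$ to be hyperbolic (scaling a quadratic form by $\la 2\ra$ being irrelevant for hyperbolicity). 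Then~\eqref{P:Lou} with $\pi=\lla a,b\rra$ and $n=m$ yields that $2^{\Delta(m)}\x(\vf\ox\lla a,b\rra)$ is hyperbolic, and applying~\eqref{T:JAC} to $\Psi'_L\simeq\Ad(2^{\Delta(m)}\x\vf)\ox(a\qilr b)_L$ shows that $\Psi'_L$ is hyperbolic, again a contradiction. Since these cases exhaust all possibilities, $2^{\Delta(m)}\x\Psi$ must be hyperbolic.

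The genuine inputs are~\eqref{T:KT}, which carries out the reduction to the split or index-$2$ case, and~\eqref{P:Lou}, which provides the quantitative bound; the rest is bookkeeping with involution trace forms and Jacobson's theorem. The point requiring the most care is that the degree parameter $m$ must coincide with the dimension of the quadratic form $\vf$ occurring in each decomposition $\Psi_L\simeq\Ad(\vf)\ox\Phi$: this is precisely why $m$ is defined as $\tfrac{1}{2}\deg(\Psi)$ in the symplectic case, where the canonical factor $\Phi$ is a quaternion algebra and so contributes a factor of $2$ to the degree. One also has to keep in mind that~\eqref{T:KT} is applied to $\Psi'$ rather than to $\Psi$, which is harmless because tensoring with a split orthogonal adjoint algebra changes neither the Brauer class nor the hyperbolicity of an algebra with involution.
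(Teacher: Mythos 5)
Your proposal is correct and follows essentially the same route as the paper's proof: reduction via \eqref{T:KT} to the split or index-$2$ symplectic case, the decomposition $\Psi_L\simeq\Ad(\vf)\ox\Phi$ from \eqref{P:JAC} with $\dim(\vf)=m$, hyperbolicity of the trace form via \eqref{L:nilpotent-awi} or \eqref{P:awi-square}, the bound \eqref{P:Lou}, and Jacobson's Theorem \eqref{T:JAC} to conclude. The only cosmetic difference is that you run the reduction as a proof by contradiction applied to $\Psi'=2^{\Delta(m)}\x\Psi$, where the paper simply asserts that \eqref{T:KT} lets one assume $\Psi$ itself is split or symplectic of index $2$; both are valid.
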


\begin{proof}
In view of \Cref{T:KT} it suffices to consider the situation where $\Psi$ is either split orthogonal, or split unitary, or symplectic of index $2$.
Then by \Cref{P:JAC} we have that 
$\Psi\simeq \Ad(\vf)\ox \Phi$ for a  form $\vf$ over $K$ with $\dim(\vf)=m$ and a $K$-algebra with canonical involution $\Phi$.
As $\Psi^{\ox 2}$ is hyperbolic, it follows from \Cref{P:awi-square} if $\Psi$ is orthogonal or symplectic and from \Cref{L:nilpotent-awi} if $\Psi$ is unitary
that $\itr(\Psi)$ is hyperbolic.
By \Cref{P:itr-prod} and \Cref{P:trace-qf-square}
we have $\itr(\Psi)\simeq \vf\ox\vf\ox\itr(\Phi)$.
Hence, \Cref{P:Lou} yields that the form $(2^{\Delta(m)}\times  \vf)\ox\itr(\Phi)$ is hyperbolic.
 We conclude using \Cref{T:JAC} that $2^{\Delta(m)}\times \Psi\simeq \Ad(2^{\Delta(m)}\times \vf)\ox\Phi$ is hyperbolic.
\end{proof}

\begin{thm}
\label{T:quatord}
Assume that $\underline{\Psi}$ is a $K$-quaternion algebra and let $m\in\nat$ be such that $2^m \x \Psi^{\ox 2}$ is hyperbolic.
Then $2^{m+1}\x \Psi$ is hyperbolic.
Furthermore, if $\Psi$ is symplectic, then $2^m \x \Psi$ is hyperbolic.
\end{thm}

\begin{proof} 
Assume that $\Psi$ is split and orthogonal  or  unitary.
Then either
$\Psi\simeq \Ad\lla a\rra$ for some $a\in\mg{K}$ or $\Psi\simeq \Ad\lla a\rra\ox (b)_K$ for some $a,b\in \mg{K}$. Either way,
as $\lla a, a\rra \simeq 2\x \lla a \rra$
it follows that $\Psi^{\ox 2}\simeq 2\x \Psi$, whereby 
 $2^m \x \Psi^{\ox 2}\simeq 2^{m+1}\x \Psi$ and the statement follows.
We derive the implication claimed in general for $\Psi$ orthogonal or unitary by reduction to the split case by means of \Cref{T:KT}.

Assume now that $\Psi$ is symplectic. If $\Psi$ is split then it is hyperbolic and so is $2^m\times\Psi$.
Assume that $\Psi$ is not split. Then we have $\Psi^{\ox 2}\simeq \Ad(\itr(\Psi))$ by \Cref{P:awi-square} and hence $2^m\x \Psi^{\ox 2}\simeq \Ad(2^m\times \itr(\Psi))$ by \Cref{P:ad-mult}.
Hence, if $2^m\x \Psi^{\ox 2}$ is hyperbolic, then $2^m\times \itr(\Psi)$ is hyperbolic, and it follows by \Cref{T:JAC} that  $2^m\times \Psi$ is hyperbolic.
\end{proof}

The following example shows that the converse in \Cref{T:quatord} does not hold in general.

\begin{ex}
Let $m\in\nat$.
Assume that $K$ is either $k(t)$ or $k(\!(t)\!)$ for some field~$k$. Let $a \in \D_k (2^{m+1})\setminus \D_k (2^m)$. 
Then the form $2^m\times \lla a, -t\rra$ over $K$ is anisotropic.
Therefore $2^m\times (a \qil t)_K^{\ox 2} \simeq \Ad(2^m\x \lla a, -t\rra)$ is anisotropic, whereas $2^{m+1}\x (a \qil t)_K$ is hyperbolic.
\end{ex}

\begin{thm}\label{P:final}
Let $a,b\in \mg K$. The $K$-algebra with involution $2\x (a\qil b)_K$ is hyperbolic if and only if $a\in \D_K \la 1,1\ra \cup \D_K\la 1,b\ra$.
For $n\in\nat $ with $n\geq 2$ we have that $2^n\x (a\qil b)_K$ is hyperbolic if and only if 
$a=x(y+b)$ with $x\in \D_K(2^n-1)$ and $y \in \D_K(2^n-1)\cup \{0\}$. 
\end{thm}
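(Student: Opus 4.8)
The plan separates the case $n=1$ from the case $n\ge 2$.

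\emph{Case $n=1$.} Since $2\x(a\qil b)_K=\Ad\lla-1\rra\ox(a\qil b)_K$, it follows from \eqref{P:simhyp} that this is hyperbolic if and only if $-1\in\G\,(a\qil b)_K=\D_K\lla a\rra\cup b\,\D_K\lla a\rra$. So all that remains is to check the two elementary equivalences $-1\in\D_K\la 1,-a\ra\iff a\in\D_K\la 1,1\ra$ and $-1\in b\,\D_K\la 1,-a\ra\iff a\in\D_K\la 1,b\ra$. Writing $-1=s^2-at^2$, respectively $-b=s^2-at^2$, and dividing by $t^2$ gives $a\in\D_K\la 1,1\ra$, respectively $a\in\D_K\la 1,b\ra$, when $t\ne 0$; when $t=0$ the relevant binary form is isotropic, hence universal, and the equivalence is trivial. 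The converses are symmetric, and one uses throughout that $\D_K\la 1,-a\ra$ is a subgroup of $\mg K$ containing $\sq K$, so that $b$ and $b^{-1}$ are interchangeable.

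\emph{Case $n\ge 2$: reduction to quadratic forms.} The $K$-algebra with involution $\Psi:=2^n\x(a\qil b)_K=\Ad(2^n\x\la 1\ra)\ox(a\qil b)_K$ is orthogonal and satisfies $\Psi\sim(a\qil b)_K$, so by \eqref{T:DPSS} it is hyperbolic if and only if $\Psi_L$ is hyperbolic, where $L$ is the function field of the conic $aX^2+bY^2=1$. Over $L$ this conic has a point, so $(a,b)_L$ splits; hence by \eqref{P:-qil-characterisation} we have $(a\qil b)_L\simeq(a\qil 1)_L$, and $(a\qil 1)_K\simeq\Ad\lla a\rra$ for every $K$ (a split orthogonal algebra with involution of degree $2$ is adjoint to a binary form, determined up to similarity, hence by \eqref{E:itr} and \eqref{P:trace-qf-square} determined by its involution trace form, and $\itr(a\qil 1)_K\simeq\la 1,1,-a,-a\ra\simeq\itr(\Ad\lla a\rra)$). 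Therefore, using \eqref{P:ad-mult}, $\Psi_L\simeq\Ad\bigl((2^n\x\la 1\ra)\ox\lla a\rra_L\bigr)$, which is hyperbolic if and only if $(2^n\x\la 1\ra)\ox\lla a\rra_L$ is hyperbolic, that is, if and only if $a$ is a similarity factor of the Pfister form $2^n\x\la 1\ra$ over $L$, that is, if and only if $a\in\D_L(2^n)$.

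\emph{Case $n\ge 2$: the sum-of-squares condition over $L$.} It now suffices to prove that $a\in\D_L(2^n)$ if and only if $a=x(c+b)$ for some $x\in\D_K(2^n-1)$ and $c\in\D_K(2^n-1)\cup\{0\}$. For the converse, the generic point of the conic furnishes $u,v\in\mg L$ with $au^2+bv^2=1$, whence $b=v^{-2}(1-au^2)$ and a short manipulation gives $a(v^2+xu^2)=x(cv^2+1)$; since $\D_L(2^n)$ is a group and $v^2+xu^2\in\D_L(1+(2^n-1))=\D_L(2^n)$, $cv^2+1\in\D_L(2^n)$ and $x\in\D_K(2^n-1)\subseteq\D_L(2^n)$, this yields $a\in\D_L(2^n)$; in the degenerate case $v^2+xu^2=0$ one gets $-1\in\D_L(2^n-1)$ and hence $\D_L(2^n)=\mg L$. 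The forward direction is the crux of the whole theorem. Writing $L=K(s)(w)$ with $w^2=a+bs^2$, one expands $a=\sum_{k=1}^{2^n}(p_k+q_kw)^2$ with $p_k,q_k\in K(s)$ and compares the two $K(s)$-components, obtaining $\sum_k p_kq_k=0$ together with $a(1-Q)=P+bs^2Q$, where $P=\sum_k p_k^2$ and $Q=\sum_k q_k^2$. The plan is then to clear denominators and specialise $s$ to a suitable element of $K$ — a Cassels--Pfister type argument in which the orthogonality relation $\sum_k p_kq_k=0$ is exactly what lets one trade one square — so as to extract the required $x$ and $c$. This is where $\D_K(2^n-1)$ rather than $\D_K(2^n)$ arises, and it also explains why $n=1$ is genuinely exceptional: carrying out the same specialisation with $2^0=1$ would recover only the set $\D_K\la 1,b\ra$, not the full set $\D_K\la 1,1\ra\cup\D_K\la 1,b\ra$ of the first statement.
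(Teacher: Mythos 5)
Your case $n=1$ is correct and is a genuine alternative to the paper's argument: you read off hyperbolicity of $2\times(a\qil b)_K$ from $-1\in\G\,(a\qil b)_K$ via \eqref{P:simhyp} and the computation of $\G\,(a\qil b)_K$, whereas the paper rewrites $2\times(a\qil b)_K$ using \eqref{P:flipflop} and applies \eqref{P:bqhyp}; your elementary equivalences, including the degenerate cases, check out. For $n\ge 2$ your reduction via \eqref{T:DPSS} to the hyperbolicity of $2^n\times\lla a\rra_L$ over the function field $L$ of the conic is the same as the paper's, and restating this as $a\in\D_L(2^n)$ is legitimate since $2^n\times\lla a\rra$ is a Pfister form. (One repair: do not justify $(a\qil 1)_K\simeq\Ad\lla a\rra$ by saying the involution trace form determines a binary form up to similarity --- it does not: over $\qq$ both $\la 1,-2\ra^{\ox 2}$ and $\la 1,-1\ra^{\ox 2}$ are hyperbolic, yet $\la 1,-2\ra$ is anisotropic and $\la 1,-1\ra$ is not. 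Instead note that $\Ad\lla a\rra\simeq (a\qil d)_K$ for some $d$, that splitness forces $d\in\D_K\lla a\rra$, and conclude by \eqref{P:-qil-characterisation}.)

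The genuine gap is the forward implication of your last step: you never prove that $a\in\D_L(2^n)$ forces $a=x(y+b)$ with $x\in \D_K(2^n-1)$ and $y\in\D_K(2^n-1)\cup\{0\}$. You only announce ``a Cassels--Pfister type argument'' --- expand $a$ over $K(s)(w)$, clear denominators, specialise $s$ --- and you yourself call this the crux of the theorem. As written it is a plan, not a proof, and it is precisely where all the difficulty sits: one must descend a representation of $a$ by $2^n\times\la 1\ra$ from $K(s)(w)$ to polynomial data over $K(s)$ while controlling the number of squares, and then specialise; in effect you would be reproving the subform theorem in a special case. The paper sidesteps this by quoting that theorem: for the Pfister form $\pi=2^n\times\lla a\rra$, $\pi_L$ is hyperbolic if and only if $\la 1,-a,-b\ra$ is similar to a subform of $\pi$ over $K$ \cite[Chap.~X, (4.28)]{Lam}, which is equivalent to the isotropy of $(2^n-1)\times\lla a\rra\perp\la b\ra$ over $K$, and that isotropy converts into $a=x(y+b)$ by a short computation. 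Either carry out your descent in full or replace it by this citation. Your converse direction (the identity $a(v^2+xu^2)=x(cv^2+1)$ together with the group property of $\D_L(2^n)$) is fine, provided you also dispose of the degenerate case $cv^2+1=0$ as you did for $v^2+xu^2=0$.
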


\begin{proof}
Note that
$2\x (a \qil b)_K \simeq (-1 \qil 1)_K \ox (a \qil b)_K \simeq (-1 \qilr a)_K\ox (a \qilr -b)_K$ by \Cref{P:flipflop}.
Hence, by \Cref{P:bqhyp}, $2\x (a \qil b)_K$ is hyperbolic if and only if one of $(-1,a)_K$ and $(a,-b)_K$ is split, which happens if and only if
$a\in \D_K\la 1,1\ra \cup \D_K\la 1,b\ra$.

Let  $n\geq 2$. Let $L$ denote the function field of the conic $aX^2+bY^2=1$ over~$K$.
Note that $(a,b)_L$ is split and thus $2^n\x (a\qil b)_L \simeq \Ad(2^n\times\lla a\rra_L)$.
Using \Cref{T:DPSS} we obtain that $2^n\x (a \qil b)_K$ is hyperbolic  if and only if  $2^n\x (a \qil b)_L$ is hyperbolic, which is 
the case if and only if $2^n\x\lla a\rra_L$ is hyperbolic.
Using \cite[Chap.~X, (4.28)]{Lam} we conclude that this happens if and only if $\la 1,-a,-b\ra$ is a subform of $2^n\x \lla a\rra$ over $K$. (Note that, since $n\geq 2$ both conditions hold in particular when $2^n\times\lla a\rra$ is hyperbolic; for $n=1$ this would fail.)
This is the case if and only if $(2^n-1)\times \lla a\rra\perp\la b\ra$ is isotropic.
Finally, this occurs if and only if $a=x(y+b)$ for some $x\in \D_K(2^n-1)$ and $y \in \D_K(2^n-1)\cup \{0\}$. 
\end{proof}

\begin{qu}
If $K$ is pythagorean and $\sign(\Psi)=0$, is $2\times \Psi$ then necessarily hyperbolic?
\end{qu}

\section*{Acknowledgements}
This work was supported by the \emph{Deutsche Forschungsgemeinschaft} (project \emph{Quadratic Forms and Invariants}, {BE 2614/3}), by the \emph{Zu\-kunfts\-kolleg, Universit\"at Konstanz},  by the Odysseus Programme 
funded by the \emph{Fonds Wetenschappelijk Onderzoek -- Vlaanderen}
(project \emph{Explicit Methods in Quadratic Form Theory}), and by the \emph{Science Foundation Ireland} Research Frontiers Programme (project no. 07/RFP/MATF191).

\section*{References}

\end{document}